\documentclass[a4paper,11pt, reqno]{amsart}

\setlength{\parindent}{0cm}

\usepackage[margin=2cm]{geometry}
\usepackage[utf8]{inputenc}
\usepackage[T1]{fontenc}
\usepackage{lmodern}
\usepackage[french,english]{babel}
\usepackage{amsthm, amssymb, amsmath, amsfonts, mathrsfs, dsfont, esint, textcomp, bm}
\usepackage[colorlinks=true, pdfstartview=FitV, linkcolor=blue, citecolor=blue, urlcolor=blue,pagebackref=false]{hyperref}
\usepackage{mathtools}
\usepackage{tikz}
\usepackage{enumitem, imakeidx}
\usetikzlibrary{patterns}

\usepackage{microtype}
%

\definecolor{darkblue}{rgb}{0,0,0.7} 
\definecolor{darkred}{rgb}{0.9,0.1,0.1}
\definecolor{darkgreen}{rgb}{0,0.5,0}

\newtheorem{thm}{Theorem}[section]

\newtheorem{lem}[thm]{Lemma}

\theoremstyle{remark}
\newtheorem{rem}[thm]{Remark}
\theoremstyle{definition}

\newcommand{\blue}{\color{blue}}

\renewcommand{\leq}{\leqslant}
\renewcommand{\geq}{\geqslant}

\renewcommand{\subset}{\subseteq}

\newcommand{\I}{\mathcal{I}}

\newcommand{\uh}{u_{\mathsf{h}}}
\newcommand{\ph}{p_{\mathsf{h}}}
\newcommand{\F}{\mathcal{F}}

\newcommand{\N}{\mathbb{N}}

\newcommand{\1}{\mathbf{1}}
\newcommand{\R}{\mathbb{R}}

\newcommand{\Z}{\mathbb{Z}}

\renewcommand{\P}{\mathbb{P}}

\newcommand{\eps}{\varepsilon}

\newcommand{\red}{\color{red}}

\newcommand{\aeps}{\eps^\frac{d}{d-2}}

\newcommand{\wto}{\rightharpoonup}
\newcommand{\Rd}{\R^d}

\newcommand{\rr}{\mathcal{R}}
\newcommand{\km}{k_{max}}

\DeclareMathOperator{\dist}{dist}
\DeclareMathOperator{\dv}{div}

\DeclareMathOperator{\supp}{supp}

\numberwithin{equation}{section}

\mathtoolsset{showonlyrefs}

\title[]{Homogenization for the Stokes equations in randomly perforated domains under almost minimal assumptions on the size of the holes}
\author{Arianna Giunti, Richard M. H\"ofer}

\begin{document}

\begin{abstract}

We prove the homogenization to the Brinkman equations for the incompressible Stokes equations in a bounded domain which is perforated by a random
collection of small spherical holes. The fluid satisfies a no-slip boundary condition at the holes. The balls generating the holes have centres distributed according to a
Poisson point process and i.i.d. unbounded radii satisfying a suitable moment condition. We stress that our assumption on the distribution of the radii does not
exclude that, with overwhelming probability, the holes contain clusters made by many overlapping balls. We show that the formation of these clusters has no effect on
the limit Brinkman equations. Due to the incompressiblility condition and the lack of a maximum principle for the Stokes equations, our proof requires a very careful study of the geometry of the random holes generated by the class of probability measures considered.
\end{abstract}
\maketitle

\tableofcontents

\section{Introduction}
\label{sec:intro}
In this paper we consider the steady incompressible Stokes equations
\begin{align}
	\label{Stokes}
\begin{cases}
-\Delta u_\eps + \nabla p_\eps = f \ \ \ &\text{in $D^\eps$}\\
\nabla \cdot u_\eps = 0 \ \ \ &\text{in $D^\eps$}\\
u_\eps = 0 \ \ \ &\text{on $\partial D^\eps$}
\end{cases}
\end{align}
in a domain $D^\eps$, that is obtained by removing from a bounded set $D \subset \Rd$, $d > 2$, a random number of small balls having random centres and 
radii. More precisely, for $\eps > 0$, we define
\begin{align}
	\label{perforation}
D^\eps= D \backslash H^\eps, \ \ \ \ \ H^\eps:= \bigcup_{z_i \in \Phi \cap \frac{1}{\eps}D} B_{\aeps \rho_i} (\eps z_i),
\end{align}
where $\Phi$ is a Poisson point process on $\Rd$ with homogeneous intensity rate $\lambda > 0$, and the radii $\{\rho_i \}_{z_i \in \Phi} \subset \R_+$ are identically and
independently distributed unbounded random variables.  We comment on the exact assumptions on the distribution of each $\rho_i$ later in this introduction. 
Our main result states that, for almost every realization of $H^\eps$ in \eqref{perforation}, the solution  $u_\eps$ to \eqref{Stokes} weakly converges in $H^1_0(D)$ to the
solution $\uh$ of the Brinkman equations
\begin{align}
	\label{Brinkman}
\begin{cases}
-\Delta \uh + \mu \uh + \nabla p_h  = f \ \ \ &\text{in $D$}\\
\nabla \cdot \uh = 0 \ \ \ &\text{in $D$}\\
\uh= 0 \ \ \ &\text{on $\partial D$}.
\end{cases}
\end{align}
The constant matrix $\mu$ appearing in the equations above satisfies
\begin{align}\label{strange.term.Stokes}
\mu= \mu_0 \mathrm{I}, \ \ \ \ \ \mu_0 = C_d \lambda \langle \rho^{d-2} \rangle,
\end{align}
where $\langle \cdot \rangle$ denotes the expectation under the probability measure on the radii $\rho_i$, and the constant $C_d > 0$ depends only on the dimension $d$. 
In the case $d=3$, we have $C_d =6\pi$.

\smallskip

From a physical point of view, the equations in \eqref{Stokes} represent the motion of an incompressible viscous fluid among many small obstacles; the additional term 
$\mu \uh$ appearing in \eqref{Brinkman} corresponds to the effective friction force of the obstacles acting on the fluid.  In the physical literature, the term $\mu$ is usually referred to as the ``Stokes resistance''; in this paper, we mostly adopt for $\mu$ the term ``Stokes capacity density'' to emphasize
the analogy with the harmonic capacity density which appears in the analogue homogenization problem for the Poisson equation \cite{ CioranescuMurat, GHV1}. More precisely, for a smooth and bounded set $E \subset \Rd$, let us define its \textit{Stokes capacity} as the symmetric and positive-definite matrix given by
\begin{align}\label{Stokes.capacity}
 \xi^t \cdot M \xi = \inf_{w \in E_\xi} \int_{\Rd \backslash E} |\nabla w |^2, \ \ \  \text{for all $\xi \in \Rd$.}
\end{align}
Here,
$$
E_\xi= \bigl\{ w \in H^1_{loc}(\Rd; \Rd) \, \colon \, \nabla \cdot w =0, \ w=\xi \ \text{in $E$}, \ \ w \to 0 \ \text{for $|x| \uparrow +\infty$} \bigr\}.
$$
Then, in the case $E= B_r$, we obtain $M = C_d r^{d-2} \mathrm{I}$ (see e.g. \cite{AllaireARMA1990a}). The definition \eqref{strange.term.Stokes} of $\mu$ is thus an 
averaged version of the previous formula where we take into account the intensity rate of the Process $\Phi$ according to which the balls of $H^\eps$ are generated.

\bigskip

This work is an adaptation to the Stokes equations of the homogenization result obtained in \cite{GHV1} for the Poisson equation. In particular, the class of random holes
considered in the current paper is included in the class studied in \cite{GHV1}. In the latter, it is assumed that the identically distributed radii $\rho_i$ in \eqref{perforation} satisfy
\begin{align}\label{GHV.condition}
\langle \rho^{d-2} \rangle < + \infty.
\end{align}
In the current paper, we require the slightly stronger condition
\begin{align}
	\label{power.law}
 \langle \rho^{(d-2)+ \beta} \rangle < +\infty, \ \ \  \text{for some $\beta > 0$.}
\end{align}
Before further commenting on \eqref{power.law} in the next paragraph, we recall that in the case of the Poisson problem, the analogue of the term $\mu$ appearing in the
homogenized equation \eqref{Brinkman} is the asymptotic harmonic capacity density generated by the holes $H^\eps$. Assumption \eqref{GHV.condition} is minimal in order to
have that this quantity is finite in average, but does not exclude that with overwhelming probability some balls generating $H^\eps$ overlap. For further comments on this,
we refer to the introduction in \cite{GHV1}.

\smallskip

The main challenge in proving the results of this paper is related to the regions of $H^\eps$ where there are clustering effects. More precisely, the main goal is to 
estimate their contribution to the Stokes capacity density, and thus to the limit term $\mu$ appearing in \eqref{Brinkman}. In the case of the Poisson equation in \cite{GHV1}, the analogue is done by 
relying on the sub-additivity of the harmonic capacity, together with \eqref{GHV.condition} and a Strong Law of Large Numbers. In the case of the Stokes capacity 
\eqref{strange.term.Stokes}, though, sub-additivity fails due to the incompressibility of the fluid (i.e. the divergence-free condition). We thus need to cook up a different method to deal with the balls in $H^\eps$ which overlap or are too close.  Heuristically speaking, the main challenge is that the incompressibility condition yields that big velocities are needed to squeeze a fixed volume of fluid through a possible narrow opening. The main reason for the strengthened assumption \eqref{power.law} is that it allows us to obtain a certain degree of information on the geometry of the clusters of $H^\eps$. In particular, \eqref{power.law} rules out the occurrence of clusters made of too many holes of \emph{similar size}. We emphasize, however, that it neither prevents the balls generating $H^\eps$ from overlapping, nor it implies a uniform upper bound on the number of balls of very different size which combine into a cluster (see Section \ref{s.probability}). The main technical effort of this paper goes into developing a strategy to deal with these geometric considerations and succeed in controlling the term in \eqref{Brinkman}. We refer to Subsection \ref{sec:ideas} for a more detailed discussion on our strategy.

\smallskip

We also mention that, to avoid further technicalities, we only treat the case where the centres of the balls in \eqref{perforation} are distributed according to a homogeneous Poisson point
process. It is easy to check that our result applies both to the case of periodic centres and to any (short-range)
correlated point process for which the results contained in Appendix \ref{sec:SLLN} hold. 

\bigskip

After Brinkman proposed the equations \eqref{Brinkman} in \cite{Brinkman1947} for the fluid flow in porous media, an extensive literature has been developed to obtain a
rigorous derivation of  \eqref{Brinkman} from \eqref{Stokes} in the case of periodic configuration of holes \cite{Brillard1986-1987, LEVY198311, SanchezP82,  MarKhr64}. 
We take inspiration in particular from \cite{AllaireARMA1990a}, where the method used in \cite{CioranescuMurat} for the Poisson equations is adapted to treat the case of the
Stokes equations in domains with periodic holes of arbitrary and identical shape. In \cite{AllaireARMA1990a}, by a compactness argument, the same techniques used for
the Stokes equations also provide the analogous result in the case of the stationary Navier-Stokes equations. The same is true also in our setting (see Remark \ref{N-S}
in Section \ref{setting}).

\smallskip

In \cite{Desvillettes2008}, with methods similar to \cite{AllaireARMA1990a} and \cite{CioranescuMurat}, the homogenization of stationary Stokes and Navier-Stokes equations 
has been extended also to the case of spherical holes where different and constant Dirichlet boundary conditions are prescribed  at the boundary of each ball.
This corresponds to the \emph{quasi-static regime} of holes slowly moving in a fluid, and gives rise in \eqref{Brinkman} to an additional source term 
$\mu j$, with $j$ being the limit flux of the holes. In \cite{Desvillettes2008}, the holes have all the same radius, are not necessarily 
periodic, but satisfy a uniform minimal distance condition of the same order of $\eps$ as in the periodic setting. In \cite{Hillairet2018}, this last condition has been weakened but
not completely removed. In particular it is still assumed that, asymptotically for $\eps \downarrow 0$, the radius of each hole is much smaller than its distance to any other hole. 

\smallskip

In \cite{Hillairet2017}, the quasi-static Stokes equations are considered in perforated domains with holes of different shapes which are both translating and rotating.
Due to the shapes of the holes, the problem becomes non-isotropic, i.e. the matrix $\mu$ in \eqref{Brinkman} is not a multiple of the 
identity. Moreover, since also the rotations of the holes are included into the model, a more complicated source term $\bar{\mathbb{F}}$ arises on the right hand side
of the limit problem. The result in \cite{Hillairet2017} is proved under the same uniform minimal distance assumption as in \cite{Desvillettes2008}.

\smallskip

Finally, we also mention that the homogenization in the Brinkman regime for evolutionary Navier-Stokes in a bounded domain of $\R^3$ has been considered in
\cite{Feireisl2016}. In this paper, the holes are assumed to be disjoint, have arbitrary shape and uniformly bounded diameter. A condition on the minimal distance between 
the holes is substituted by a weaker assumption implying that, for $\eps$ small enough, the diameter of the holes is much smaller than the distance between them.

\medskip

There are fewer results in the literature concerning the case of randomly distributed holes: In \cite{Rubinstein1986}, the case of $N$ randomly distributed spherical
holes of size $N^{-1}$ in $\R^3$ is considered. Starting from the Brinkman equation \eqref{Brinkman} with the term $\mu$ sufficiently large, it is shown that in the
limit $N \to \infty$ an additional zero-order term appears in the limit equation. This result has been recently generalized in \cite{Carrapatoso2018} to the case of the Stokes equations in the
quasi-static regime.

\medskip

The derivation of the Brinkman equations can be viewed as a very first step in deriving the so-called Vlasov(-Navier)-Stokes equations, a model for the
coupled dynamics of particles suspended in a fluid. A rigorous derivation of these equations for the full problem is completely open.
Homogenization results for such dynamic problems have only been achieved in the case when the inertia of the particles is neglected.
In that case, an external constant gravitation field is considered, and the friction caused by the particles is only related to gravity.
For inertialess particles, \cite{Jabin2004} identified the regime that is so dilute that  particles effectively do not interact.
In \cite{Hofer18}, the homogenization result for the inertialess problem has been obtained under a uniform minimal distance assumption.
A related result has been obtained in \cite{Mecherbet2018} where convergence to the same limit equation is proven also when rotations of the particles are taken into account. The assumptions on the initial particle distributions in \cite{Mecherbet2018} do not contain the uniform minimal distance assumption from \cite{Hofer18}, but they are similar to those in \cite{Hillairet2018}. However, the convergence is only proved for small times and for
initial particle distributions that are sufficiently dilute.

\medskip

We emphasize that the main novelty of our paper is that we consider spherical holes whose radii are not uniformly bounded and only satisfy
\eqref{power.law}. As already mentioned above, for small $\beta$ in \eqref{power.law}, with probability tending to one as $\eps \to 0$, 
the perforated domain $D^\eps$ in \eqref{perforation} contains many holes that overlap. In all the deterministic results listed above, overlapping balls are either excluded 
or asymptotically ruled out for $\eps \downarrow 0$. Similarly, in the random settings of \cite{Rubinstein1986} and
\cite{Carrapatoso2018}, the overlapping are negligible in probability: Since the radii of the holes are chosen to be identically $N^{-1}$, it is shown that, with probability tending to one as
$N \to \infty$, the minimal distance between them is bounded below by $N^{-\alpha}$ for $\alpha < 1$ . 

\smallskip

{
We finally mention that our main result does not provide any convergence result for the pressures $\{ p_\eps\}_{\eps > 0}$. However, it is possible to upgrade our
techniques to obtain a partial control on the pressure terms. We address this issue in the paper in preparation \cite{GH2}, and refer to Remark \ref{r.pressure} of the 
current paper for a detailed statement.}

\bigskip

This paper is organized as follows: In Section \ref{setting} we state the main theorem on the convergence of the fluid velocity $u_\eps$. In Subsection \ref{proof.main} we 
formulate Lemma \ref{reduction.operator} which provides a rich class of test-functions for \eqref{Stokes} 
and characterizes their behaviour in the limit $\eps \to 0$. We then show how the convergence of  $u_\eps$ follows from this result. In Section \ref{geometry}, we give some
geometric properties for the realization of the holes $H^\eps$ that are needed in order to prove Lemma \ref{reduction.operator}. These properties are split into two lemmas.
The first one is analogous to the corresponding lemma in 
\cite{GHV1}, the other one gives more detailed informations on the geometry of the clusters of $H^\eps$ and is the result which requires the strengthened version
\eqref{power.law} of \eqref{GHV.condition}. In subsection \ref{s.geometry.proof}, we prove the results stated in Section \ref{geometry}. In Section \ref{sec:testfunction}, 
we prove Lemma \ref{reduction.operator}. In Section \ref{s.probability}, 
we prove some probabilistic result on the number of comparable balls which may combine into a cluster of $H^\eps$. These are the key ingredients used in subsection
\ref{s.geometry.proof} to show the geometric results of Section \ref{geometry}. Finally, the appendix is divided into three parts: In Appendix \ref{sec:Proof.remark}, we show how to extend the convergence result from the Stokes equations to the Stationary Navier-Stokes equations.  In Appendix \ref{sec:estimates.stokes}, we give some standard estimates for the solutions of the Stokes equations in annuli and exterior domains.
In Appendix \ref{sec:SLLN}, we recall some results concerning the Strong Law of Large Numbers, which have been proved in detail in \cite{GHV1} and which are used also 
throughout this paper.

\section{Setting and main result}\label{setting}

Let $D \subset \R^d$, $d >2$, be an open and bounded set that is star-shaped with respect to the origin. For $\varepsilon > 0$, we denote by $D^\varepsilon \subset D$ the 
domain obtained as in \eqref{perforation}, namely by setting $D^\eps = D \backslash H^\eps$ with 
\begin{equation}\label{holes}
H^\varepsilon := \bigcup_{z_j \in \Phi \cap \frac 1 \eps D} B_{\varepsilon^{\frac{d}{d-2}} \rho_j} (\varepsilon z_j).
\end{equation}
Here, $\Phi \subset \Rd$ is a homogeneous Poisson point process having intensity $\lambda > 0$ and the radii
$\rr :=\{ \rho_i \}_{z_i \in \Phi}$ are i.i.d. random variables which satisfy condition \eqref{power.law} for a fixed $\beta >0$. Since assumption \eqref{power.law} with 
$\beta_1> 0$ implies \eqref{power.law} for every other $0 < \beta \leq \beta_1$, with no loss of generality we assume that $\beta \leq 1$.

\smallskip

Throughout the paper we denote by $(\Omega, \F , \P)$ the probability space associated to the marked point process $(\Phi, \rr)$, i.e. the joint process of the centres and radii
distributed as above. We refer to \cite{GHV1} for a detailed introduction of marked point processes as the one introduced in this paper. 

\medskip 

\subsection{Notation} \label{sec:notation}
 For a point process $\Phi$ on $\R^d$ and any bounded set $E \subset \R^d$, we define the random variables
\begin{equation} 	\label{Number.function}
\begin{aligned}
	\Phi(E)&:= \Phi \cap E, && \Phi^\eps(E):= \Phi \cap \left(\frac 1 \eps E \right), \\
  N(E) &:= \# (\Phi (E)), && N^{\eps}(E) := \# (\Phi^\eps(E)).
\end{aligned}
\end{equation}
For $\eta > 0$, we denote by  $\Phi_\eta$ a thinning for the process $\Phi$ obtained as 
\begin{align}\label{thinned.process}
\Phi_\eta(\omega):= \{ x \in \Phi(\omega) \,  \colon \, \min_{y \in \Phi(\omega), \atop y \neq x} | x- y| \geq \eta \},
\end{align}
i.e. the points of $\Phi(\omega)$ whose minimal distance from the other points is at least $\eta$. Given the process $\Phi_\eta$, we set $\Phi_\eta(E)$, 
$ \Phi_\eta^\eps(E)$, $N_\eta(E)$ and $N_\eta^\eps(E)$ for the analogues for  $\Phi_\eta$ of the random variables defined in \eqref{Number.function}.

\smallskip

 For a bounded and measurable set $E \subset \Rd$ and any $1\leq p < + \infty$, we denote 
 \begin{align}
 L^p_0(E):= \{ f \in L^p(E)  \ \colon \ \int_E f = 0 \}.
 \end{align}
As in \cite{GHV1}, we identify $v \in H^1_0(D^\eps)$ with the function $\bar v \in H^1_0(D)$ obtained by extending $v$ to zero in $H^\eps$.

\smallskip

Throughout the proofs in this paper, we write $a \lesssim b$ whenever $a \leq C b$ for a constant $C=C(d,\beta)$ depending only on the dimension $d$ and $\beta$ from assumption
\eqref{power.law}. Moreover, when no ambiguity occurs, we use a scalar notation also for vector fields and 
vector-valued function spaces, i.e. we write for instance $C^\infty_0(D), H^1(\Rd), L^p(\Rd)$ instead of $C^\infty_0(D; \Rd), H^1(\Rd ;\Rd), L^p(\Rd;\Rd)$.
 
\subsection{Main result} Let $(\Phi, \rr)$ be a marked point process as above, and let $H^\varepsilon$ be defined as in \eqref{holes}.

\begin{thm}\label{t.main}
 For $f \in H^{-1}(D ; \Rd)$ and $\varepsilon > 0$, let
$(u_\varepsilon, p_\eps) =( u_\varepsilon (\omega, \cdot), p_\eps(\omega, \cdot)) \in H^1_0(D^\varepsilon; \R^d)\times L^2_0(D^\eps; \R)$  be the solution of
\begin{equation}\label{P.epsilon}
\begin{cases}
-\Delta u_\varepsilon + \nabla p_\eps = f  \ \ \ \ &\text{in $D^\varepsilon$}\\
\, \nabla \cdot u_\eps = 0\ \ \ \ &\text{in $D^\varepsilon$}\\
\ u_\eps = 0 \ \ \ \ &\text{on $\partial D^\eps$.}
\end{cases}
\end{equation}

Then, for $\P$-almost every $\omega \in \Omega$ and for $\varepsilon \downarrow 0^+$
\begin{align}\label{convergence}
u_\varepsilon(\omega, \cdot) \rightharpoonup \uh,
\end{align} 
where $(\uh, \ph) \in H^1_0(D; \Rd) \times L^2_0(D; \R)$ is the solution of 
\begin{equation}\label{P.hom}
\begin{cases}
-\Delta \uh + \nabla \ph + {C_d} \lambda \langle \rho^{d-2}\rangle \uh = f  \ \ \ \ &\text{in $D$}\\
\, \nabla \cdot \uh = 0\ \ \ \ &\text{in $D$}\\
\ \uh = 0 \ \ \ \ &\text{on $\partial D$.}
\end{cases}
\end{equation}
Here, the constant $C_d$ is as in \eqref{strange.term.Stokes}.
\end{thm}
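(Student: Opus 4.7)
The plan is to take Lemma \ref{reduction.operator} as a black box producing, for each $\xi \in \R^d$, an oscillating corrector $w_\eps^\xi \in H^1(D)$ that vanishes on $H^\eps$, converges weakly to $\xi$ in $H^1(D)$, is (at leading order) divergence-free, and satisfies $-\Delta w_\eps^\xi + \nabla q_\eps^\xi \to \mu \xi = C_d \lambda \langle \rho^{d-2}\rangle \xi$ in $W^{-1,r}(D)$ for some $r>d$. Granted this, we run a Tartar-style oscillating test function argument. First, testing \eqref{P.epsilon} with $u_\eps$ itself and extending by zero gives the deterministic a priori bound $\|u_\eps\|_{H^1_0(D)} \ls \|f\|_{H^{-1}(D)}$, uniformly in $\eps$ and $\omega$. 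Along a subsequence, $u_\eps \rightharpoonup u$ in $H^1_0(D)$ and $u_\eps \to u$ in $L^2(D)$ by Rellich-Kondrachov, and the limit inherits $\nabla \cdot u = 0$ in $D$ and $u = 0$ on $\partial D$.

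The heart of the argument is to test \eqref{P.epsilon} against $\phi w_\eps^\xi$ for an arbitrary $\phi \in C_c^\infty(D)$ (which is admissible since $w_\eps^\xi$ vanishes on the holes) and, symmetrically, to test the equation solved by $w_\eps^\xi$ against $\phi u_\eps$ (which is admissible because $u_\eps$ vanishes on the holes). Subtracting the two identities eliminates the shared highest-order term $\int \phi\, \nabla u_\eps : \nabla w_\eps^\xi$ and leaves only:
\begin{equation*}
\int_D \nabla u_\eps : (w_\eps^\xi \otimes \nabla \phi) \;-\; \int_D \nabla w_\eps^\xi : (u_\eps \otimes \nabla \phi) \;+\; \text{pressure pairings} \;=\; \langle f, \phi w_\eps^\xi\rangle \;-\; \langle -\Delta w_\eps^\xi + \nabla q_\eps^\xi, \phi u_\eps\rangle.
\end{equation*}
Each of these terms is designed to pass to the limit: the first two are products of a weakly convergent gradient with a strongly convergent (in $L^2$, by Rellich) field; the right-hand side converges by weak-strong pairing together with the $W^{-1,r}$-convergence of $-\Delta w_\eps^\xi + \nabla q_\eps^\xi$ against $\phi u_\eps \in W^{1,r'}_0(D)$. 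For the pressure, a standard restriction/Bogovskii operator on the perforated domain produces an extension $\tilde p_\eps \in L^2_0(D)$ bounded uniformly in $\eps$; after extraction, $\tilde p_\eps \rightharpoonup p$ weakly and the pressure pairings pass to the limit against the (weakly convergent) divergences of $\phi w_\eps^\xi$ and $\phi u_\eps$.

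Collecting the limits, for every $\xi \in \R^d$ and $\phi \in C_c^\infty(D)$ one obtains the identity corresponding to the distributional formulation of \eqref{P.hom} tested with $\phi \xi$. Letting $\xi$ run through a basis of $\R^d$ and $\phi$ through $C_c^\infty(D)$ identifies $(u,p)$ as a weak solution of the Brinkman system \eqref{P.hom}. Uniqueness of this solution forces $u = \uh$, and since every subsequential limit coincides with $\uh$, the whole family $u_\eps(\omega, \cdot)$ converges weakly in $H^1_0(D)$ for $\P$-almost every $\omega$; the $\P$-almost sure quantifier is inherited from the almost sure statements used in Lemma \ref{reduction.operator} and in the Strong Law of Large Numbers from Appendix \ref{sec:SLLN}.

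The main obstacle, and the reason the bulk of the paper is devoted to Sections \ref{geometry}--\ref{s.probability}, lies entirely in Lemma \ref{reduction.operator}: constructing $w_\eps^\xi$ in the clusters of $H^\eps$. Because the Stokes capacity \eqref{Stokes.capacity} is not subadditive (the incompressibility constraint blocks simple gluing of single-ball correctors, since squeezing a fixed volume through a narrow gap demands large velocities), the usual ``one corrector per hole'' strategy used for the Poisson problem in \cite{GHV1} fails wherever balls overlap or are too close. The strengthened moment condition \eqref{power.law} is what allows the geometric dyadic decomposition in Section \ref{geometry} to rule out clusters built out of too many holes of \emph{comparable} size, so that the capacity contribution of the clusters can be absorbed and the effective Stokes capacity density $C_d \lambda \langle \rho^{d-2}\rangle$ recovered in the limit.
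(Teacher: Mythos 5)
Your proposal misreads what Lemma~\ref{reduction.operator} actually provides, and as a consequence you re-introduce the very difficulty that the paper's formulation was designed to bypass. Lemma~\ref{reduction.operator} is \emph{not} a Cioranescu--Murat-type corrector lemma producing, for each $\xi\in\Rd$, an oscillating $w_\eps^\xi$ together with a $W^{-1,r}$-limit of $-\Delta w_\eps^\xi+\nabla q_\eps^\xi$. It is a \emph{restriction operator} in the spirit of Tartar and of \cite{Desvillettes2008}: it acts on arbitrary divergence-free $v\in C_0^\infty(D)$, produces $R_\eps v$ that still satisfies $\nabla\cdot R_\eps v=0$ in all of $\Rd$ and vanishes on $H^\eps$, and property~\ref{pro.capacity} gives, for this \emph{specific} nonlinear functional and only for sequences $u_\eps\in H^1_0(D^\eps)$ that themselves vanish on the holes, the convergence $\int\nabla R_\eps v:\nabla u_\eps\to\int\nabla v:\nabla u+C_d\lambda\langle\rho^{d-2}\rangle\int v\cdot u$. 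The paper's proof of Theorem~\ref{t.main} is then one line: test \eqref{P.epsilon} with $R_\eps v$; because $R_\eps v$ is exactly divergence-free the pressure pairs to zero, and properties~\ref{pro.convergence.H^1} and~\ref{pro.capacity} pass both sides to the limit. No pressure bound is ever needed.

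The concrete gap in your argument is the step ``a standard restriction/Bogovskii operator on the perforated domain produces an extension $\tilde p_\eps\in L^2_0(D)$ bounded uniformly in $\eps$.'' This is precisely what is \emph{not} standard in the present setting. In Allaire's periodic case the holes are well separated and one has scale-invariant Bogovskii bounds on the complement of uniformly separated balls; here the holes form clusters of unbounded multiplicity with arbitrarily narrow gaps, and the Bogovskii constant on $D^\eps$ degenerates there. The paper explicitly flags this in Remark~\ref{r.pressure}: the pressure is \emph{not} controlled in this paper, and even in the companion work \cite{GH2} one only recovers $\tilde p_\eps\rightharpoonup p_h$ in $L^q$ for $q<\frac{d}{d-1}$, on a set from which a small exceptional neighbourhood of $H^\eps$ has been removed. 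A uniform $L^2_0(D)$ extension as your Tartar-style compensated argument requires is not available. Once you multiply a corrector by a cutoff $\phi$ you destroy the divergence-free constraint, $\nabla\cdot(\phi w_\eps^\xi)\neq 0$, and the pressure pairing $\int p_\eps\,\nabla\cdot(\phi w_\eps^\xi)$ reappears with no uniform bound to pass it to the limit. This is why the paper works with $R_\eps v$ for compactly supported divergence-free $v$ rather than with localizations of directional correctors: the whole point of the operator formulation is to never break $\nabla\cdot=0$ and hence never see the pressure.

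Your surrounding observations (the a priori bound $\|u_\eps\|_{H^1_0}\ls\|f\|_{H^{-1}}$, extraction of a weakly convergent subsequence, uniqueness of the Brinkman solution, and the explanation of where the clustering difficulty lives) are correct and match the paper's. But the core limit-passage step as you have written it does not go through without the unproved pressure estimate; you should instead test directly with $R_\eps v$ and invoke properties~\ref{pro.convergence.H^1} and~\ref{pro.capacity} of Lemma~\ref{reduction.operator}, exactly as stated.
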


\begin{rem}[Stationary Navier-Stokes equations]\label{N-S}
As in the case of periodic holes \cite{AllaireARMA1990a}, we remark that the same result of Theorem \ref{t.main} holds in dimension $d=3,4$ for the solutions $u_\eps$ 
to the stationary Navier-Stokes system
\begin{equation}\label{NS.epsilon}
\begin{cases}
u_\eps \cdot \nabla u_\eps - \Delta u_\varepsilon + \nabla p_\eps = f  \ \ \ \ &\text{in $D^\varepsilon$}\\
\, \nabla \cdot u_\eps = 0\ \ \ \ &\text{in $D^\varepsilon$,}\\
u_\eps = 0 \ \ \ \ &\text{on $\partial D^\eps$}
\end{cases}
\end{equation}
with homogenized equations
\begin{equation}\label{NS.hom}
\begin{cases}
u_h \cdot \nabla u_h - \Delta u_h + C_d \lambda \langle \rho^{d-2}\rangle u_h + \nabla p_h = f  \ \ \ \ &\text{in $D$}\\
\, \nabla \cdot u_h = 0\ \ \ \ &\text{in $D$}\\
u_h= 0 \ \ \ \ &\text{on $\partial D$,}
\end{cases}
\end{equation}

We argue in the appendix how the same argument that we give in the next section for Theorem \ref{t.main} allows also to treat the non-linear term in \eqref{NS.epsilon}.
\end{rem}

\begin{rem}[Convergence of the pressure terms]\label{r.pressure}
In most of the literature concerned with the homogenization of equations \eqref{P.epsilon} the convergence of the pressure is not considered. The only exception 
is \cite{AllaireARMA1990a} where it is shown that for a suitable extension $P_\eps (p_\eps)$ for $p_\eps$ on the whole domain $D$, the functions $P_\eps (p_\eps)$ converge
to $p_h$ weakly in $L^2(D)$.  The main difficulty in our case is again given by the presence of the clusters of $H^\eps$ that prevents us from finding suitable bounds for $p_\eps$ close to those regions.
Nonetheless, we anticipate here the following result, which we prove {in \cite{GH2}}. It states that $p_\eps$ converges to $p_h$, as long as
we remove from $D$ an exceptional set $E^\eps$ containing $H^\eps$. This set almost coincides with $H^\eps$ in the sense that the difference $E^\eps \backslash H^\eps$
has harmonic capacity $\operatorname{Cap}(E^\eps \backslash H^\eps)$ vanishing in the limit $\eps \downarrow 0^+$.
\end{rem}
\begin{thm}[\cite{GH2}]
	\label{t.pressure}
	For almost every $\omega \in \Omega$, there exists a set $E^\eps \subset \Rd$ such that $E^\eps \supset H^\eps$ and for $\eps \downarrow 0^+$
	\begin{align}
		\label{Cap.E^eps}
	 \operatorname{Cap}(E^\eps \backslash H^\eps) \to 0.
	\end{align}
	 Moreover, for  every compact set $K \Subset D$, the modification of the pressure 
	\begin{align}\label{p.tilde}
	 \tilde p_\eps = \begin{cases}
	  p_\eps - \fint_{K \backslash E^\eps} p_\eps \ \ \ \ &\text{ in $K \backslash E^\eps$}\\
	  0 \ \ \ \ &\text{ in $D\backslash K \cup E^\eps$}
	  \end{cases}
	\end{align}
satisfies $\tilde p_\eps \rightharpoonup p_h$ in $L^{q}_0(K ; \R)$, for all $q < \frac{d}{d-1}$.
\end{thm}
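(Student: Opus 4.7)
The natural route is De Rham duality. For a test $g\in L^p_0(K)$ with $p>d$, I want to build $\varphi_\eps\in W^{1,p}_0(K\setminus E^\eps;\Rd)$ with $\nabla\cdot\varphi_\eps=g$ on $K\setminus E^\eps$, extended by zero across $E^\eps$, and then test \eqref{P.epsilon} against it to obtain
\[
\int_D \nabla u_\eps:\nabla\varphi_\eps \,-\, \int_{K\setminus E^\eps} \tilde p_\eps\, g = \Ll\langle f,\varphi_\eps\Rr\rangle.
\]
Passing to the limit $\eps\downarrow 0$ and comparing with the weak formulation of \eqref{P.hom} would identify $\lim\int \tilde p_\eps g=\int p_h g$ for every such $g$, yielding the claimed convergence by density (the restriction $p>d$ corresponds by duality to $q<\frac{d}{d-1}$ on the pressure side). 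The role of $E^\eps$ is then clear: it must be a minimal enlargement of $H^\eps$ outside of which a \emph{uniformly bounded} Bogovskii operator exists, yet one that remains negligible in terms of harmonic capacity.

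\textbf{Construction of $E^\eps$ and Bogovskii operator.} I would classify the balls of $H^\eps$ following Section \ref{geometry}, isolating on one side the good balls that are well-separated at their natural scale, and on the other side the clusters produced by the comparable-balls mechanism quantified in Section \ref{s.probability}. Each cluster is replaced by a single ball whose radius is its natural diameter multiplied by a slowly diverging factor in $\eps$; the freedom in choosing this factor is what allows a trade-off between the John geometry of $D\setminus E^\eps$ and the capacity of $E^\eps\setminus H^\eps$. Combining \eqref{power.law} with the Strong Law of Large Numbers of Appendix \ref{sec:SLLN} one obtains
\[
\operatorname{Cap}(E^\eps\setminus H^\eps) \;\ls\; \sum_{\text{clusters}} (\text{enlarged radius})^{d-2} \;\longrightarrow\; 0,
\]
while each connected component of $D\setminus E^\eps$ retains a uniform John-type constant. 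The classical Bogovskii construction on John domains then furnishes $\mcl B_\eps:L^p_0(D\setminus E^\eps)\to W^{1,p}_0(D\setminus E^\eps;\Rd)$ with norm bounded uniformly in $\eps$ for every $p>d$.

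\textbf{Limit passage and main obstacle.} Testing \eqref{P.epsilon} with $\varphi_\eps=\mcl B_\eps(g)$ and using the uniform $W^{1,p}$ bound with $p>d$ (so that $\varphi_\eps$ is equicontinuous on $K$), I extract $\varphi_\eps\wto\bar\varphi$ weakly in $W^{1,p}_0(K;\Rd)$ with $\nabla\cdot\bar\varphi=g$. A variant of Lemma \ref{reduction.operator}, applied along the sequence $\varphi_\eps$ rather than only to smooth test functions, then produces the Brinkman correction
\[
\int \nabla u_\eps:\nabla\varphi_\eps \;\longrightarrow\; \int \nabla \uh:\nabla\bar\varphi + \mu_0\int \uh\cdot\bar\varphi,
\]
so that the weak formulation of \eqref{P.hom} tested against $\bar\varphi$ yields $\int p_h g=\lim\int\tilde p_\eps g$ after absorbing the mean-value normalization of \eqref{p.tilde}. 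The hard step, in my view, is the geometric trade-off in the definition of $E^\eps$: the enlargement must be small enough in capacity to satisfy \eqref{Cap.E^eps} yet large enough to open up the narrow channels between clustering balls so that Bogovskii constants remain uniform. This is precisely where the strengthened moment bound \eqref{power.law}, as opposed to the borderline \eqref{GHV.condition}, is expected to be essential, since it is exactly what controls the maximal number of comparable-sized balls that may merge into a single cluster, and hence the geometry of the admissible thickening.
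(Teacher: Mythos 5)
The paper does not prove Theorem~\ref{t.pressure}; it is stated as a result of the companion paper~\cite{GH2}, and Remark~\ref{r.pressure} only indicates that the difficulty lies in controlling $p_\eps$ near the clusters of $H^\eps$. There is therefore no in-paper proof to compare against, so what follows is an assessment of your sketch on its own merits.

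Your general strategy---duality via a uniformly bounded Bogovskii operator on $K\setminus E^\eps$, with $E^\eps$ a thickening of the clustered holes chosen so that the channels between overlapping balls are opened up while the excess harmonic capacity vanishes---is a natural one and is dual to Allaire's pressure-extension construction~\cite{AllaireARMA1990a}; I would expect the authors' approach in~\cite{GH2} to be in this spirit. The $p>d$ (equivalently $q<d/(d-1)$) threshold and the trade-off between the John geometry of $D\setminus E^\eps$ and $\operatorname{Cap}(E^\eps\setminus H^\eps)$ are well identified, and the role of~\eqref{power.law} through Lemma~\ref{l.chains} in bounding cluster sizes is correctly seen as the geometric engine. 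On that side, one refinement worth making explicit: for $\operatorname{Cap}(E^\eps\setminus H^\eps)\to 0$ with a slowly diverging enlargement factor $\lambda^\eps$, you need that the sum $\eps^d\sum_{z_i\in\I^\eps}\rho_i^{d-2}\to 0$ is what you pay the factor $(\lambda^\eps)^{d-2}$ against; this is exactly what Lemma~\ref{l.LLN.index.set} and \eqref{I.small} deliver, so your budget is indeed $o(1)$ and the factor may diverge.

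The genuine gap is in the limit passage. You claim, invoking ``a variant of Lemma~\ref{reduction.operator} applied along the sequence $\varphi_\eps$,'' that
\begin{align}
\int \nabla u_\eps : \nabla \varphi_\eps \;\longrightarrow\; \int \nabla u_h : \nabla \bar\varphi + \mu_0\int u_h\cdot\bar\varphi.
\end{align}
This is precisely where the argument needs new input, and the Lemma as stated does not supply it. Lemma~\ref{reduction.operator} concerns $R_\eps v$ for a \emph{fixed smooth divergence-free} $v$; its properties, in particular property~\ref{pro.capacity}, depend essentially on the explicit oscillating structure of $R_\eps v$ built from rescaled annular Stokes solutions. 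The Bogovskii corrections $\varphi_\eps=\mcl B_\eps(g)$ have no reason to carry that structure: $\nabla u_\eps$ and $\nabla \varphi_\eps$ each converge only weakly in $L^2$, and their pairing need not converge to the value you want (nor to any particular value) without identifying the concentration of $\nabla\varphi_\eps$ at the holes. Concretely, you would need either (i) to extend the operator $R_\eps$ of Lemma~\ref{reduction.operator} to \emph{non}-divergence-free $v\in W^{1,p}_0$ with control of the divergence mismatch, so that $\langle\nabla p_\eps,R_\eps u\rangle$ defines a bona fide gradient extension as in~\cite{AllaireARMA1990a}, and then prove the Brinkman-term identification for such $R_\eps u$; or (ii) to prove a compensated-compactness-type statement showing that the defect measure of $\nabla u_\eps\cdot\nabla\varphi_\eps$ concentrates on $\bigcup_i\partial T_i$ with density $\mu_0\, u_h\cdot\bar\varphi$. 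Either route is substantive and not a routine ``variant'' of the Lemma; without one of them your identification of the limiting pressure does not close. The rest of the sketch (uniform $L^q$ bound on $\tilde p_\eps$, extraction of a weak limit, mean-normalization bookkeeping) is sound modulo this step.
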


\medskip

{
\subsection{Main ideas in proving Theorem \ref{t.main}}
\label{sec:ideas}
As already mentioned above, the structure and many arguments of this paper are an adaptation of \cite{GHV1} to the case of the Stokes equations.
In this subsection, we point out the main differences and the challenges that we encountered along the process.

\smallskip

In contrast with \cite{GHV1}, we prove the convergence of the fluid velocities $u_\eps$ by using an implicit version of the method of oscillating test-functions, which is 
similar to the one of \cite{Desvillettes2008}: We construct an operator $R_\eps$ which acts on divergence-free test-functions
$v$ such that $R_\eps v \in H^1_0(D^\eps)$ is an admissible test function for \eqref{P.epsilon}, $R_\eps v  \to v$ in $H^1_0(D)$ and  $\nabla\cdot R_\eps v = 0$ in $D$. 
This last condition in particular implies that we may test the equation \eqref{P.epsilon} with $R_\eps v$ and do not need any bounds on the pressure $p_\eps$. 
 
\smallskip
 
As in \cite{GHV1} with the construction of the oscillating test-functions $w_\eps$, the construction of the operator $R_\eps$ relies on
a lemma dealing with the geometric properties of the set of holes $H^\eps$ which perforate $D$ in \eqref{perforation}. This lemma allows us to split the set $H^\eps$ into a
``good'' set $H^\eps_g$, which contains holes which are small and well-separated, and a ``bad'' set $H^\eps_b$, which contains big and overlapping holes. On the one hand, we
construct $R_\eps v$ such that it vanishes on $H^\eps_g$ by closely following the ideas in \cite{AllaireARMA1990a} and \cite{Desvillettes2008}. On the other hand, to define
 $R_\eps v$ in such a way that it vanishes also on $H^\eps_b$, we need to improve the arguments used in \cite{GHV1}. In fact, as pointed out in the introduction, 
 in contrast with \cite{GHV1}, by the 
 incompressibility condition it is not enough to prove that the harmonic capacity of $H^\eps_b$ vanishes in the limit $\eps \downarrow 0^+$.

\smallskip

In order to overcome this problem, we use the following strategy to construct $R_\eps v$ such that, for any divergence-free $v \in C^\infty_0(D, \Rd)$, the function
$R_\eps v$ vanishes on the ``bad'' set $H^\eps_b$, remains divergence-free in $D$ and converges to $v$ in $H^1_0(D; \Rd)$. We recall that in the set $H^\eps_b$ the balls may
overlap; the challenge is therefore to find a suitable truncation for $v$ on this set, which preserves the divergence-free condition and which remains bounded
in an $H^1$-sense.  A first approach to construct $R_\eps v$ would then be to solve the Stokes problem in a large enough neighbourhood $D^\eps_b$ of $H^\eps_b$
 \begin{align}\label{stokes.1}
 \begin{cases}
 -\Delta w_\eps +  \nabla \pi_\eps  = \Delta v \ \ \ &\text{in $D^\eps_b \setminus \overline H^\eps_b$}\\
 \nabla \cdot w = 0 \ \ \ &\text{ in $D^\eps \setminus \overline H^\eps_b$}\\
 w = 0 \ \ \ &\text{on $\partial H^\eps_b$}\\
 w(x) = v \ \ \ &\text{on $\partial D^\eps_b$.}
 \end{cases}
 \end{align}
The connection with the concept of ``Stokes capacity'' generated by the set $H^\eps_b$ thus becomes apparent; namely, at least in the case of sets $E$ regular enough, the minimizer in \eqref{Stokes.capacity} solves
 \begin{align}
 \begin{cases}
 -\Delta w +  \nabla \pi  = 0 \ \ \ &\text{in $\Rd \setminus \overline E$}\\
 \nabla \cdot w = 0 \ \ \ &\text{ in $\Rd\setminus \overline E$}\\
 w = \xi \ \ \ &\text{on $\partial E$}\\
 w(x) \to 0 \ \ \ &\text{as $|x| \to \infty$.}
 \end{cases}
 \end{align}
However, getting  $H^1$-estimates on the solution $w^\eps$ of \eqref{stokes.1} which depend explicitly on $\eps$, requires more informations than we have on the geometry of 
the set $H^\eps_b$. In fact, condition \eqref{power.law} does not prevent the balls from overlapping nor provides an upper bound on the number of balls in each of the
clusters (cf. Lemma \ref{l.chains}). The approach that we adopt to construct $R_\eps v$ is therefore different and is based on finding a suitable covering $\bar H^\eps_b$ 
of the set  $H^\eps_b$. The set $\bar H^\eps_b$ is obtained by selecting some of the balls that constitute $H^\eps_b$ and dilating them by a uniformly bounded factor 
$\lambda_\eps \leq \Lambda$. The main, crucial, feature of this covering is that 
it allows us to construct $R_\eps v$ vanishing on $H^\eps_b \subset \bar H^\eps_b$ by solving different Stokes problems in disjoint annuli of the form
$B_{\theta \lambda_\eps\aeps \rho_i}(\eps z_i) \backslash B_{\lambda_\eps \aeps \rho_i}(\eps z_i)$, $\theta > 1$, and iterating
this procedure a finite number of steps.  The advantage in this is that we construct $R_\eps v$ iteratively and obtain bounds by applying a finite number of times some
standard and rescaled estimates for solutions to Stokes equations in the annulus $B_\theta \backslash B_1$.

\smallskip

More precisely, $\bar H^\eps_b$ is chosen to satisfy the following properties:
\begin{itemize}
 \item[$(a)$] $\bar H^\eps_b$ is the union of $M < +\infty$ families of balls such that, inside the same family, the balls $B_{\lambda_\eps \aeps\rho_i}(\eps z_i)$ are disjoint even if dilated by a further
 factor $\theta^2 >0$, i.e. by considering $B_{\theta^2\lambda_\eps \aeps\rho_i}(\eps z_i)$;
\end{itemize}
By this property, if we want to construct $R_\eps v$ vanishing only in the holes of the same family, it suffices to solve \eqref{stokes.1} in the disjoint annuli 
$B_{\theta \lambda_\eps \aeps\rho_i}(\eps z_i) \backslash B_{\lambda_\eps \aeps\rho_i}(\eps z_i)$ and stitch the solutions together. This suffices to construct $R_\eps v$
vanishing on the balls $B_{\lambda_\eps \aeps\rho_i}(\eps z_i)$ of the same family, and thus on the subset of $H^\eps_b$ covered by 
them. In order to obtain $R_\eps v$ vanishing on the whole set $H^\eps_b$, one may try to iterate the previous procedure: Let the families of balls constituting 
$\bar H^\eps_b$ be ordered with an index $k = 1, \cdots, M$. Then:
\begin{itemize}
 \item[$\bullet$] We construct a first solution $v^1_\eps$ which solves \eqref{stokes.1} in all the (disjoint) annuli generated by the first family;
 \item[$\bullet$] We construct $v^2_\eps$ solving \eqref{stokes.1} with $v$ substituted by $v^1_\eps$ in the (disjoint) annuli of the second family;
 \item[$\bullet$] We iterate the procedure up to the $M$-th family and set $R_\eps v = v^M_\eps$. 
\end{itemize}

 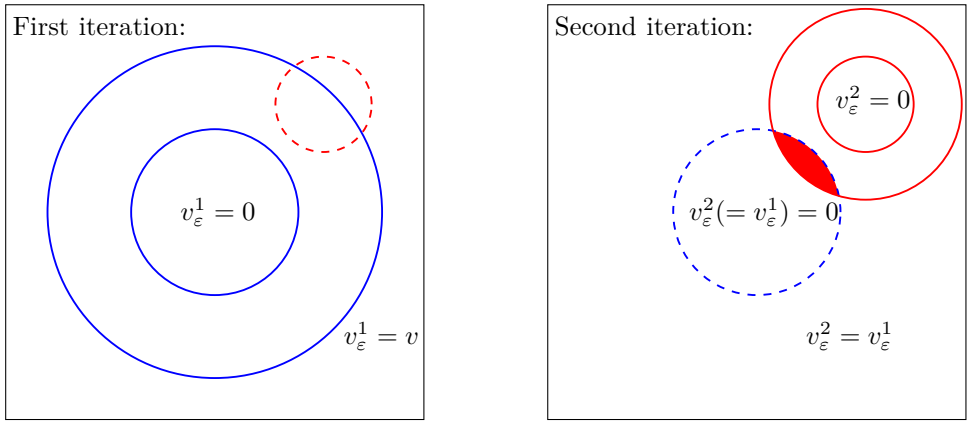
\begin{figure}
 \begin{minipage}[c]{7cm}
 \begin{tikzpicture}[scale=0.55]
\node[text width=3cm] at (-0.1,6.5) {\small  First iteration:};
\draw[blue, line width=0.25mm](2,2) circle (2);
\draw[blue, line width=0.25mm](2,2) circle (4);
\draw[red,dashed, line width=0.25mm](4.6,4.6) circle (1.15);
\node[text width=2cm] at (3,2) {\small $v^1_\eps=0$};
\node[text width=2cm] at (6.9,-1) {\small $v^1_\eps=v$};
\draw (-3,7) rectangle (7,-3);
;\end{tikzpicture}
 \end{minipage}
  \begin{minipage}[c]{7cm}
 \begin{tikzpicture}[scale=0.55]
\node[text width=3cm] at (-0.1,6.5) {\small Second iteration:};
\def\firstcircle{(2,2) circle (2)}
\def\secondcircle{(4.6,4.6) circle (2.3)}
\fill[white] \firstcircle \secondcircle;
    \begin{scope}
        \clip \firstcircle;
        \fill[red] \secondcircle;
    \end{scope}
\draw[blue, dashed, line width=0.25mm](2,2) circle (2);
\draw[red, line width=0.25mm](4.6,4.6) circle (1.15);
\draw[red, line width=0.25mm](4.6,4.6) circle (2.3);
\node[text width=2cm] at (2.2,2) {\small $v^2_\eps(= v^1_\eps)= 0$};
\node[text width=2cm] at (5,-1) {\small $v^2_\eps=v^1_\eps$};
\node[text width=2cm] at (5.7,4.7) {\small $v^2_\eps=0$};
\draw (-3,7) rectangle (7,-3);
;\end{tikzpicture}
 \end{minipage}
 \caption{{\small This is an example of a configuration which satisfies only $(a)$ for which the algorithm to construct $R_\eps v$ may not give a function vanishing on all the holes. The first picture on the left represents the first iteration step: The blue, full-lined, ball is the hole belonging to the first family generating $\bar H^\eps_b$. We solve a Stokes problem in the blue annulus, with zero boundary conditions in the inner ball. The dashed, red ball represents a hole generated by another family of $\bar H^\eps_b$, which is neglected in this step. The second picture represents the second iteration step: Given the solution $v^1_\eps$ obtained in the first step, we solve another Stokes problem in the red, smaller, annulus with zero boundary conditions in the inner hole. Since this new annulus intersects the hole of the previous step, $v^2_\eps$ may not vanish in the intersection in red.}}  \label{BC.destroyed}
 \end{figure}

However, property $(a)$ alone does not ensure that the final solution constructed in this fashion vanishes on $H^\eps_b$: Since annuli generated by different families
may still intersect, at each step the zero-boundary conditions of the previous steps may be destroyed (as an example, see Figure \ref{BC.destroyed}). This is the reason why we need that the covering $\bar H^\eps_b$
satisfies an additional property. This property should ensure that, if at step $k$ the function $v^k$ vanishes on a certain subset of $H^\eps_b$, then also $v^{k+1}$ vanishes on that same
subset. We thus construct $\bar H^\eps_b$ in such a
way that
 \begin{itemize}
 \item[$(b)$] all the balls  $B_{\theta\lambda_\eps \aeps\rho_i}(\eps z_i)$ belonging to the $k$-th family do not intersect the balls of $H^\eps_b$ contained in the 
 previous families (cf. property \eqref{small.dont.intersect.big} of the Lemma \ref{l.geometric.v2}).\footnote{Strictly speaking, this is a simplification of the statement of Lemma \ref{l.geometric.v2} (cf. Remark \ref{real.property} in Section \ref{geometry}).}
 \end{itemize}
The construction of $\bar H^\eps_b$ satisfying $(a)$-$(b)$ is given in Lemma \ref{l.geometric.v2} of Section 4 and constitutes the most technically challenging part of
this paper.
}

\subsection{Lemma \ref{reduction.operator} and proof of Theorem \ref{t.main}}\label{proof.main}
The proof of Theorem \ref{t.main} relies on the following lemma: 

\begin{lem}\label{reduction.operator} For almost every $\omega \in \Omega$ and for all $\eps \leq \eps_0(\omega)$ there exists a linear map
$$
R_\eps \colon \{ v \in C^\infty_0(D) \, \colon \, \nabla \cdot v = 0 \}  \to H^1(D)
$$
with the following properties:
\begin{enumerate}[label=(\roman*)]
 \item $R_\eps v =0$ in $H^\eps$ and,  for $\eps$ small enough, also $R_\eps v \in H^1_0(D)$; \label{pro.0.inside.holes}
 \item $\nabla \cdot R_\eps v =0$ in $\Rd$; \label{pro.divergence.free}
 \item $R_\eps v \rightharpoonup v$ in $H^1_0(D)$; \label{pro.convergence.H^1}
  \item $R_\eps v \to v$ in $L^p(D)$ for all $1 \leq p < \infty$; \label{pro.convergence.L^p}
 \item For all $u_\eps \in H^1_0(D^\eps )$ such that $\nabla \cdot u_\eps =0$ in $D$ and $u_\eps \rightharpoonup u$ in $H^1_0(D)$, we have
 \begin{align}
  \int \nabla R_\eps v : \nabla u_\eps \to \int \nabla v : \nabla u + C_d \lambda \langle \rho^{d-2} \rangle \int v \cdot  u,
 \end{align}
 with $C_d$ as in Theorem \ref{t.main}. \label{pro.capacity}
\end{enumerate}
\end{lem}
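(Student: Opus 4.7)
The plan is to construct $R_\eps$ by a two-scale surgery on $v$, following the strategy outlined in Subsection \ref{sec:ideas}. First I would invoke the geometric results of Section \ref{geometry} to split $H^\eps = H^\eps_g \cup H^\eps_b$, where $H^\eps_g$ consists of small, well-separated holes and $H^\eps_b$ is covered by a set $\bar H^\eps_b$ that is the union of $M < \infty$ families of balls satisfying property $(a)$ (disjointness after dilation by $\theta^2$ within each family) and property $(b)$ (balls of the $k$-th family do not meet the holes of previous families). The operator $R_\eps v$ is then built iteratively: a first correction kills $v$ on $H^\eps_g$, and $M$ successive corrections kill it on the bad families. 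Each correction is a Stokes-problem solution supported in thin annuli around the relevant balls, and the divergence-free condition is preserved by working inside disjoint annuli (and absorbing any boundary-data discrepancies with a Bogovski\u{\i} operator in a thin buffer).

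For the good set, around each isolated hole $B_{\aeps\rho_i}(\eps z_i) \in H^\eps_g$ I would take an annulus $A_i = B_{r_i}(\eps z_i)\setminus B_{\aeps\rho_i}(\eps z_i)$ with $\aeps\rho_i \ll r_i \ll \eps$, and solve the Stokes problem for $(w^i,\pi^i)$ with $w^i = v$ on the outer sphere, $w^i = 0$ on the inner sphere, $\nabla\cdot w^i = 0$ in $A_i$. This is the Stokes oscillating test-function of \cite{AllaireARMA1990a}. Since $v$ is smooth, on the outer sphere we may replace $v$ by the constant $v(\eps z_i)$ up to an $H^1$-error of order $o(\eps)$, and the rescaled $w^i$ then realises the Stokes capacity of a ball, with energy of order $C_d(\aeps\rho_i)^{d-2}|v(\eps z_i)|^2$. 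Setting $v^0_\eps = v - \sum_i w^i$ produces a divergence-free function in $H^1_0(D)$ vanishing on $H^\eps_g$; an energy bound and the SLLN of Appendix \ref{sec:SLLN} give
\begin{align}
\|\nabla(v^0_\eps - v)\|_{L^2}^2 \ls \aeps^{d-2}\sum_{z_i\in\Phi^\eps(D)}\rho_i^{d-2}\|v\|_\infty^2 = O(\eps^2\langle\rho^{d-2}\rangle),
\end{align}
which is $o(1)$, so $v^0_\eps \to v$ strongly in $H^1_0(D)$ along the good part.

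For the bad set, at step $k$, given $v^{k-1}_\eps$ vanishing on all balls of families $1,\dots,k-1$, I solve in each annulus $B_{\theta\lambda_\eps\aeps\rho_i}(\eps z_i)\setminus B_{\lambda_\eps\aeps\rho_i}(\eps z_i)$ of the $k$-th family the Stokes problem with inner datum $0$ and outer datum $v^{k-1}_\eps$, and glue the solutions back into $v^{k-1}_\eps$. Property $(a)$ guarantees these annuli are disjoint so gluing preserves $\nabla\cdot = 0$, and property $(b)$ ensures the new annuli do not meet any previously zeroed ball of $H^\eps_b$, so $v^k_\eps$ still vanishes on them. After $M$ iterations, $R_\eps v := v^M_\eps$ vanishes on $H^\eps$, giving \ref{pro.0.inside.holes} and \ref{pro.divergence.free}. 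Since $\lambda_\eps \leq \Lambda$ uniformly, a rescaled $H^1$-estimate in the model annulus $B_\theta\setminus B_1$ (Appendix \ref{sec:estimates.stokes}) gives
\begin{align}
\|\nabla(v^k_\eps - v^{k-1}_\eps)\|_{L^2}^2 \ls \aeps^{d-2}\sum_{i\in \text{family }k}\rho_i^{d-2}\|v^{k-1}_\eps\|_{L^\infty(A_i)}^2,
\end{align}
and the quantitative control on the families from Section \ref{geometry} together with \eqref{power.law} yields $\|R_\eps v - v\|_{H^1} = o(1)$, whence \ref{pro.convergence.H^1}; then \ref{pro.convergence.L^p} follows from Rellich.

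The main obstacle is \ref{pro.capacity}. The idea is to integrate by parts on each correction annulus against the Stokes system: for each $(w,\pi)$ with $w=0$ on the inner sphere and $u_\eps$ divergence-free in $D$,
\begin{align}
\int\nabla w : \nabla u_\eps = \int_{\partial(\text{annulus})}\sigma(w,\pi)\nu\cdot u_\eps\,d\mcl{H}^{d-1},
\end{align}
the pressure terms dropping thanks to $\nabla\cdot u_\eps = 0$, which is why \ref{pro.divergence.free} is indispensable. On each inner sphere the stress flux $\int\sigma(w,\pi)\nu$ is precisely the Stokes capacity vector of a ball, yielding $C_d(\aeps\rho_i)^{d-2}v(\eps z_i)$ up to lower order; contracting against $u_\eps$ (replaced by $u(\eps z_i)$ up to an error handled by the strong $L^p$-convergence $u_\eps\to u$) and applying the SLLN of Appendix \ref{sec:SLLN} to
\begin{align}
\eps^d\,C_d\sum_{z_i\in\Phi^\eps(D)}\rho_i^{d-2}\,v(\eps z_i)\cdot u(\eps z_i) \longrightarrow C_d\lambda\langle\rho^{d-2}\rangle\int_D v\cdot u
\end{align}
produces the claimed limit. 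The delicate point is showing that the bad-set corrections contribute negligibly to this sum on top of what the good-set corrections already give: the hierarchical covering $\bar H^\eps_b$ with $\lambda_\eps\leq \Lambda$ ensures each dilated bad ball contributes at most $\Lambda^{d-2}$ times its natural capacity, and the geometric control on cluster sizes of Section \ref{geometry}, which is precisely what is enabled by the strengthened moment condition \eqref{power.law}, forces the extra contribution to be $o(1)$.
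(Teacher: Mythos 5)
Your overall strategy — splitting $H^\eps$ into a good set $H^\eps_g$ and a bad set $H^\eps_b$, correcting $v$ on the good holes via Stokes problems in shells, correcting on the bad holes via the hierarchical annulus-iteration enabled by Lemma~\ref{l.geometric.v2}, and extracting the capacity term in property~\ref{pro.capacity} by the stress-flux integration by parts — is indeed the structure of the paper's proof.

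However, there is a genuine error in your treatment of the good-set correction, and it is not a small one: it is inconsistent with the statement you are trying to prove. You write
\begin{align}
\|\nabla(v^0_\eps - v)\|_{L^2}^2 \ls \aeps^{d-2}\sum_{z_i\in\Phi^\eps(D)}\rho_i^{d-2}\|v\|_\infty^2 = O(\eps^2\langle\rho^{d-2}\rangle) = o(1),
\end{align}
and conclude that $v^0_\eps\to v$ \emph{strongly} in $H^1_0(D)$. But $\aeps^{d-2} = (\eps^{d/(d-2)})^{d-2} = \eps^d$, and by the Strong Law of Large Numbers $\eps^d\sum_{z_i\in\Phi^\eps(D)}\rho_i^{d-2} \to \lambda|D|\langle\rho^{d-2}\rangle$, so the right-hand side is $O(1)$, not $o(1)$. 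This is not a cosmetic issue. The energy of the good-set oscillating corrector must be of order one precisely because that energy is the Stokes capacity density $\mu$ that appears in the Brinkman equation. If the good-set correction converged strongly in $H^1$, then for any test sequence $u_\eps\rightharpoonup u$ one would get $\int\nabla R_\eps v\cdot\nabla u_\eps\to\int\nabla v\cdot\nabla u$ with no friction term, contradicting property~\ref{pro.capacity} and the theorem itself. This is also why property~\ref{pro.convergence.H^1} is deliberately stated as \emph{weak} convergence $R_\eps v\rightharpoonup v$: only the bad-set correction $v^\eps_b$ converges strongly (its energy is controlled by $\eps^d\#J^\eps\to 0$), while the good-set correction $v^\eps_g$ converges only weakly. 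The paper encodes this cleanly by writing $R_\eps v = v^\eps_b + v^\eps_g - v$, so that $\int\nabla R_\eps v:\nabla u_\eps = \int\nabla v^\eps_g:\nabla u_\eps + \int\nabla(v^\eps_b-v):\nabla u_\eps$ and the second term vanishes by strong convergence, leaving the entire capacity computation to $v^\eps_g$ alone.

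A secondary remark: the paper uses a two-shell construction $C_i$, $D_i$ around each good hole (matching to the exterior Stokes solution on the inner shell, then transitioning back to $v$ on the outer shell) rather than your single annulus. This is a technical convenience for getting sharp pointwise decay estimates on the corrector, not a structural difference. Also, when you do the bad-set iteration starting from $v^0_\eps$ you should explicitly invoke the separation $\dist(H^\eps_g; D^\eps_b)>\eps^{1+\delta}$ from Lemma~\ref{l.geometry} to ensure the bad-family annuli do not touch the good-set correction region; otherwise the outer datum for the bad corrections is not $v$ but some modified quantity you have no control over.
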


\medskip

\begin{proof}[Proof of Theorem \ref{t.main}]
Let us fix $\omega \in \Omega$ such that the operator $R_\eps$ of Lemma \ref{reduction.operator} exists and satisfies all the properties \ref{pro.0.inside.holes} - \ref{pro.capacity}. We trivially  extend $u^\eps$ to the whole set $D$. Since by the standard energy estimate we have $\| u_\eps \|_{H^1_0(D)} \leq \| f \|_{H^{-1}(D)}$, then up to a subsequence $\eps_j$, we have
$u_\eps \rightharpoonup u^*$ in $H^1_0(D)$. Note that also $\nabla \cdot u^*=0$ in $D$. We show that $u^*$ solves \eqref{P.hom} and, by uniqueness, that $u^*= \uh$ in $H^1_0(D)$.
We thus may extend the convergences above to the whole limit $\eps \downarrow 0^+$. 

\smallskip

For any divergence-free $v \in C^\infty_0(D )$, we consider $\eps$ small enough such that the divergence-free vector field $R_\eps v$ obtained by means of Lemma \ref{reduction.operator} is in  $H^1_0(D)$. By testing \eqref{P.epsilon} with this vector field, we obtain
\begin{align}
\int \nabla R_\eps v :\nabla u_\eps = \langle R_\eps v , f\rangle_{H^1,H^{-1}}.
\end{align}
We now apply \ref{pro.convergence.H^1} and \ref{pro.capacity} of Lemma \ref{reduction.operator} to the left- and right-hand side of the above identity, respectively, and conclude that  $u^*$ satisfies  
\begin{align}
\int \nabla v : \nabla u^* + C_d \lambda \langle \rho^{d-2} \rangle \int v  \cdot u^* = \langle v , f\rangle_{H^1,H^{-1}}.
\end{align}
Since $v \in C^\infty_0(D)$ is an arbitrary divergence-free test function, we conclude that $u^*$ is the solution $u_h$ of \eqref{P.hom}.
\end{proof}
\section{Geometric properties of the holes}\label{geometry} 
This section is the core of the argument of Theorem \ref{t.main} and provides some almost sure geometrical properties on $H^\eps$. 
These allow us to construct the operator of Lemma \ref{reduction.operator}. 

The results contained in this section rely on assumption
\eqref{power.law} and may be considered as an upgrade of Section 4 of \cite{GHV1}. Since \eqref{power.law} is stronger than the one assumed in \cite{GHV1} (see 
\eqref{GHV.condition}), the marked point process $(\Phi, \rr)$  considered in this work is included in the class of processes studied in \cite{GHV1}. Therefore, all the 
results for $H^\eps$ contained in Section 4 of \cite{GHV1} hold also in our case. Bearing this in mind, we introduce the first main result of this section: This is almost 
a rephrasing of Lemma 4.2 of \cite{GHV1}, where, thanks to \eqref{power.law}, we are allowed to choose the sequence $r_\eps$ appearing in the statement of Lemma 4.2 in
\cite{GHV1} as a power law $r_\eps = \eps^\delta$, for $\delta = \delta(d,\beta) > 0$.

\begin{lem}\label{l.geometry}
There exists a $\delta = \delta(d,\beta) > 0$ such that for almost every $\omega \in \Omega$ and all $\eps \leq \eps_0=\eps_0(\omega)$, 
there exists a partition $H^\eps= H^\eps_g \cup H^\eps_b$ and a set $D^\eps_b \subset \Rd$ such that
$H^\eps_b \subset D^\eps_b$ and
\begin{align}
	\label{safety.layer}
\dist( H^\eps_g ; D^\eps_b) > \eps^{1 + \delta},  \ \ \ |D^\eps_b| \downarrow 0^+.
\end{align}
Furthermore, $H_g^\eps$ is a union of disjoint balls centred in $n^\eps \subset \Phi^\eps(D)$, namely

\begin{equation}\label{good.set}
\begin{aligned}
 H^\eps_g = \bigcup_{z_i \in n^\eps} B_{\aeps \rho_i}(\eps z_i), \ \ \ \ \eps^d \#n^\eps \to \lambda \, |D|, \\
\min_{z_i \neq z_j \in n^\eps} \eps |z_i - z_j| \geq 2 \eps^{1+\frac \delta 2}, \quad \aeps \rho_i \leq  \eps^{1+2\delta}.
\end{aligned}
\end{equation}

Finally, if for $\eta > 0$ the process $\Phi^\eps_{2\eta}$ is defined as in \eqref{thinned.process}, then
\begin{align}\label{small.distance.bad}
	\lim_{\eps \downarrow 0}\eps^d \#(\{z_i \in \Phi^{\eps}_{2\eta}(D) \colon \dist(\eps z_i, D_b^\eps) \leq \eta \eps \}) = 0.
\end{align}
\end{lem}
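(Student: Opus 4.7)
The plan is to adapt the construction of Lemma 4.2 of \cite{GHV1}, using the strengthened moment condition \eqref{power.law} to choose the auxiliary separation scale there as a negative power of $\eps$, rather than merely a subpolynomial one.

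I would first fix $\delta = \delta(d,\beta) > 0$ small, to be tuned at the end, and set $R_\eps := \eps^{-\frac{2}{d-2}+2\delta}$, so that $\aeps R_\eps = \eps^{1+2\delta}$. Each centre $z_i \in \Phi^\eps(D)$ is then classified as \emph{big} if $\rho_i > R_\eps$, as \emph{clustered} if some $z_j \in \Phi$ with $z_j \neq z_i$ satisfies $|z_i - z_j| < 2\eps^{\delta/2}$, and as \emph{shaded} if the ball $B_{\aeps\rho_i + 2\eps^{1+\delta}}(\eps z_i)$ meets $B_{\aeps\rho_j + 2\eps^{1+\delta}}(\eps z_j)$ for some big $z_j$. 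Let $n^\eps$ be the complement of these three categories in $\Phi^\eps(D)$, define $H^\eps_g$ by \eqref{good.set}, set $H^\eps_b := H^\eps \setminus H^\eps_g$, and take
\[
D^\eps_b \ := \ \bigcup_{z_j \text{ big}} B_{\aeps\rho_j + 2\eps^{1+\delta}}(\eps z_j) \ \cup \ \bigcup_{z_j \text{ clustered}} B_{2\eps^{1+\delta}}(\eps z_j).
\]
The separation $\min_{z_i \neq z_j \in n^\eps} \eps|z_i - z_j| \geq 2\eps^{1+\delta/2}$ and the radius bound $\aeps\rho_i \leq \eps^{1+2\delta}$ are immediate from the definitions, and $\dist(H^\eps_g, D^\eps_b) > \eps^{1+\delta}$ follows from the exclusion of shaded and clustered centres. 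The SLLN $\eps^d \#n^\eps \to \lambda|D|$ is then a consequence of the Poisson SLLN of Appendix \ref{sec:SLLN}, once the total count of bad centres is shown to be $o(\eps^{-d})$.

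The core step is the volume bound $|D^\eps_b| \to 0$. For the clustered layer, a direct first-moment (Campbell--Mecke) computation gives at most $O(\eps^{-d} \eps^{d\delta/2})$ clustered centres, each contributing a volume $\lesssim \eps^{d(1+\delta)}$, so in total $O(\eps^{3d\delta/2})$. For the big-ball layer I would first apply Borel--Cantelli to \eqref{power.law}: since $\P(\rho > r) \lesssim r^{-(d-2+\beta)}$, the probability that some $\rho_i$ in $\Phi^\eps(D)$ exceeds $M_\eps := \eps^{-d/(d-2)+\delta_1}$ is summable along $\eps_k = 2^{-k}$ for $\delta_1 = \delta_1(d,\beta) > 0$ small enough, hence almost surely for every $\eps$ small, $\aeps \rho_i \leq \eps^{\delta_1}$ for all $i$. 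With this deterministic truncation and the moment interpolation
\[
\langle \rho^d\, \mathbf{1}_{R_\eps < \rho \leq M_\eps} \rangle \ \leq \ M_\eps^{2-\beta} \langle \rho^{d-2+\beta} \rangle,
\]
which uses $2-\beta > 0$, Campbell's formula yields a total big-ball volume of order $\eps^{2d/(d-2)} M_\eps^{2-\beta} = \eps^{d\beta/(d-2) + (2-\beta)\delta_1}$, a strictly positive power of $\eps$. The extra thickening contribution $\#\{\text{big}\} \cdot \eps^{d(1+\delta)}$ is controlled in the same way, via $\#\{\text{big}\} \lesssim \eps^{-d} R_\eps^{-(d-2+\beta)}$ from Markov.

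Finally, \eqref{small.distance.bad} follows by an analogous moment-based count: a $2\eta$-thinned centre lying within $\eta\eps$ of $D^\eps_b$ cannot sit in the $\eta$-shell of a clustered thickening (by the definition of thinning), so it must lie in the $\eta$-shell of a big ball, and the number of big centres is $\lesssim \eps^{-d} R_\eps^{-(d-2+\beta)} = o(\eps^{-d})$. The main obstacle I expect is precisely the big-ball volume bound: since \eqref{power.law} gives no control on $\langle \rho^d \rangle$, the integral $\langle \rho^d \mathbf{1}_{\rho > R_\eps} \rangle$ may well be infinite, so one cannot use a bare expectation but must combine the deterministic a.s.\ truncation of the radii at $M_\eps$ with the moment interpolation above, and carefully tune $\delta$ and $\delta_1$ against the fixed exponent $\beta$.
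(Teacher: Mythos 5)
Your route is genuinely different from the paper's. The paper does not prove Lemma~\ref{l.geometry} directly; it deduces it from the much stronger Lemma~\ref{l.geometric.v2}, whose proof occupies all of Subsection~\ref{s.geometry.proof} and constructs the ordered covering families $J^\eps_k$ via the iterated merging map $T^{\Psi,\alpha}$. The extra structure in Lemma~\ref{l.geometric.v2} is what the Stokes construction in Section~\ref{sec:testfunction} actually uses, but as a standalone proof of Lemma~\ref{l.geometry} a direct adaptation of \cite{GHV1}[Lemma 4.2] with a polynomial cut-off $r_\eps = \eps^{\delta}$ is a legitimate strategy, and your choice $\aeps R_\eps = \eps^{1+2\delta}$, the decomposition into big/clustered/shaded centres, the a.s.\ truncation $\aeps\rho_i \leq \eps^{\delta_1}$ via Borel--Cantelli, and the interpolation $\langle \rho^d\mathbf 1_{\rho\leq M_\eps}\rangle \leq M_\eps^{2-\beta}\langle\rho^{d-2+\beta}\rangle$ (using $\beta\leq 1$) are all the right ingredients.

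There are, however, two genuine gaps. First, with your definition $D^\eps_b$ does \emph{not} contain $H^\eps_b$. A centre $z_i$ that is shaded but neither big nor clustered lies in $\Phi^\eps(D)\setminus n^\eps$, so $B_{\aeps\rho_i}(\eps z_i)\subset H^\eps_b$; but the shaded condition $\eps|z_i-z_j|<\aeps\rho_i+\aeps\rho_j+4\eps^{1+\delta}$ only guarantees that the $2\eps^{1+\delta}$-thickenings \emph{touch}, not that $B_{\aeps\rho_i}(\eps z_i)\subset B_{\aeps\rho_j+2\eps^{1+\delta}}(\eps z_j)$: the latter needs $\eps|z_i-z_j|\leq\aeps\rho_j-\aeps\rho_i+2\eps^{1+\delta}$, a strictly smaller threshold. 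You therefore need to add the (suitably enlarged) shaded balls to $D^\eps_b$, re-verify \eqref{safety.layer} (this part does still work, since $z_i\in n^\eps$ is not clustered and $\aeps\rho_i,\aeps\rho_j\leq\eps^{1+2\delta}\ll\eps^{1+\delta/2}$), and add a count of shaded centres to both the volume estimate and \eqref{small.distance.bad} — a nontrivial extra step which is exactly what the paper's set $K^\eps$ in \eqref{def.K.eps} handles.

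Second, your argument for \eqref{small.distance.bad} conflates the number of big \emph{centres} with the number of $2\eta$-thinned \emph{points} lying in the $\eta\eps$-shell of big balls. A single big ball of radius $\aeps\rho_j$, which after the a.s.\ truncation can still be as large as $\eps^{\delta_1}\gg\eta\eps$, may contain of order $(\aeps\rho_j/(\eta\eps))^d\gg 1$ thinned points. What is needed is an estimate of $\eps^d\sum_{\text{big}}(\aeps\rho_j/(\eta\eps)+1)^d$, which again requires the moment interpolation (giving $\eta^{-d}\eps^{2d/(d-2)}M_\eps^{2-\beta}\to 0$) rather than a bare count of big centres. The paper's corresponding estimate for the set $F^\eps$ in Step~3 of the proof of Lemma~\ref{l.geometric.v2} is done exactly this way, via the disjointness of the balls $B_{\eps^{1+\delta/2}}(\eps z_j)$, $z_j\in n^\eps$, and the bound \eqref{delta.max}. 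Fix these two points and your plan goes through.
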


\medskip

The next result upgrades the previous lemma and is the key result on which relies the
construction of the operator $R_\eps$ of Lemma \ref{reduction.operator}. We introduce the following notation: We set $\mathcal I^\eps := \Phi^\eps(D) \backslash n^\eps$, so 
that, by the previous lemma, we may write
\begin{align}
	\label{H^b}
H^\eps_b:= \bigcup_{z_i \in \mathcal \I^\eps} B_{\aeps \rho_i}(\eps z_i).
\end{align}
As already discussed in Subsection 2.1, the main aim of the next result is to show that there exists a suitable covering for $H^\eps_b$, which is of the form
\begin{align}
\bar H^\eps_b := \bigcup_{z_j \in J^\eps} B_{\lambda_j^\eps \aeps \rho_j}( \eps z_j), \ \ \ J^\eps \subset \I^\eps, \ \sup_{z_j \in J^\eps} \lambda_j^\eps \leq \Lambda
\end{align}
and which satisfies (a) and (b) of Subsection 2.1.
More precisely, we have:
\begin{lem} \label{l.geometric.v2}
 Let $\theta > 1$ be fixed. Then for almost every $\omega \in \Omega$ and $\eps \leq \eps_0(\omega, \beta, d, \theta)$ we may choose $H^\eps_g, H^\eps_b$ of Lemma \ref{l.geometry}
 in such a way that have the following:
 \begin{itemize}
 \item There exist $ \Lambda(d, \beta)> 0$, a sub-collection $J^\eps \subset \mathcal I^\eps$ and constants $\{ \lambda_l^\eps \}_{z_l\in J^\eps} \subset [1, \Lambda]$ such that
	\begin{align}
		\label{bar.H^b}
	 H_b^\eps \subset \bar H^\eps_b := \bigcup_{z_j \in J^\eps} B_{\lambda_j^\eps \aeps \rho_j}( \eps z_j), \ \ \ \lambda_j^\eps \aeps \rho_j \leq \Lambda \eps^{2d\delta}.
	\end{align}
 \item There exists $k_{max}= k_{max}(\beta, d)>0$ such that we may partition 
 	$$
	\mathcal I^\eps= \bigcup_{k=-3}^{k_{max}} \mathcal I_k^\eps, \ \ \ J^\eps= \bigcup_{i=-3}^{k_{max}} J_k^\eps,
	$$
 with $\I^\eps_k \subset J^\eps_k$ for all $k= 1, \cdots, \km$ and
{ \begin{align}\label{inclusion.step.by.step}
	\bigcup_{z_i \in \mathcal I_k^\eps} B_{\aeps \rho_i}( \eps z_i) \subset \bigcup_{z_j \in J_k^\eps} B_{\lambda_j^\eps \aeps \rho_j}( \eps z_j);
\end{align}}
 \item  For all $k=-3, \cdots, k_{max}$ and every $z_i, z_j \in J_k^\eps$, $z_i \neq z_j$
\begin{align}\label{similar.size.apart}
B_{\theta^2 \lambda_i^\eps \aeps \rho_i}(\eps z_i) \cap B_{\theta^2 \lambda_j^\eps \aeps \rho_j}(\eps z_j) = \emptyset;
\end{align}
\item For each $k=-3, \cdots, k_{max}$ and $z_i \in \mathcal I_k^\eps$ and for all $ z_j \in \bigcup_{l=-3}^{k-1} J_l^\eps$ we have
\begin{align}
\label{small.dont.intersect.big}
B_{\aeps \rho_i}(\eps z_i) \cap B_{\theta \lambda_j^\eps \aeps \rho_j}(\eps z_j) = \emptyset.
\end{align}
 \end{itemize}
 \smallskip
 Finally, the set $D^\eps_b$ of Lemma \ref{l.geometry} may be chosen as
\begin{align}
\label{D_b}
& D^\eps_b = \bigcup_{z_i \in J^\eps} B_{\theta \aeps \lambda_i^\eps\rho_i}(\eps z_i).
\end{align}
\end{lem}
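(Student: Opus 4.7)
The approach I would take is a layered Vitali-type selection on $\I^\eps$, organized so that the layers index a combinatorial depth rather than a pure size scale. The dyadic structure comes from binning the bad balls by the size class $\aeps\rho_i \in [\theta^m,\theta^{m+1})$, but the key new input, which controls both the uniform dilation bound $\Lambda$ and the total number of layers $\km$, is the probabilistic estimate of Section \ref{s.probability} on the cardinality of clusters of comparable-size balls. The four negative indices $k=-3,\dots,0$ would be reserved for the anomalously large balls (those violating the threshold $\aeps\rho_j\le \eps^{2d\delta}/\Lambda$); by \eqref{power.law} and Borel--Cantelli, only finitely many such balls exist almost surely for $\eps$ small, and each one is treated as its own cluster. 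The positive indices $k=1,\dots,\km$ are reserved for the generic dyadic size classes $S_k$.

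Within a single size class $S_k$ the construction is the standard Vitali one. I would pick a maximal $J^\eps_k\subseteq S_k$ such that the balls $B_{\theta^2\aeps\rho_j}(\eps z_j)$ for $z_j\in J^\eps_k$ are pairwise disjoint (with initial dilation $\lambda^\eps_j=1$), giving \eqref{similar.size.apart}, and the maximality plus comparability of sizes ensures that every ball in $S_k\setminus J^\eps_k$ is contained in some $B_{\lambda \aeps\rho_j}(\eps z_j)$ with $\lambda\le \theta^3$, yielding \eqref{inclusion.step.by.step} at level $k$.

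The nontrivial point is the cross-scale condition \eqref{small.dont.intersect.big}: at level $k$, no $B_{\aeps\rho_i}(\eps z_i)$ with $z_i\in \I^\eps_k$ should meet $B_{\theta\lambda^\eps_j \aeps\rho_j}(\eps z_j)$ for any $z_j$ already selected at a previous level $l<k$. I would enforce this by a pre-treatment step performed before each Vitali selection: any $z_i\in S_k$ whose small ball intersects an already-chosen $\theta\lambda^\eps_j$-ball is removed from $S_k$ and ``absorbed'' into the cluster of the corresponding $z_j$ by enlarging $\lambda^\eps_j$ just enough to swallow it. After the pre-treatment, \eqref{small.dont.intersect.big} holds by construction, the absorption preserves the inclusions \eqref{inclusion.step.by.step} from earlier levels, and the remaining Vitali selection at level $k$ proceeds on the reduced collection.

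The main obstacle, and the heart of the argument, is that both $\Lambda$ and $\km$ must be chosen deterministically, independent of $\eps$ and $\omega$. A priori, each cluster could accumulate arbitrarily many absorptions, pushing its $\lambda^\eps_j$ beyond any fixed bound. This is precisely where \eqref{power.law} rather than the weaker \eqref{GHV.condition} is needed: the estimates in Section \ref{s.probability} guarantee that inside any bounded window only a uniformly bounded number of balls with mutually comparable radius can cluster. Since only comparable-size absorptions contribute non-negligibly to the enlargement of $\lambda^\eps_j$, while absorptions of much smaller balls change it by a factor $1+C\theta^{k-l}$ that sums to a finite geometric series, a uniform bound $\Lambda=\Lambda(d,\beta)$ is obtained; the same cardinality bound simultaneously caps the number of non-trivial Vitali levels, giving $\km=\km(d,\beta)$. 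Once this is in place, the covering $\bar H^\eps_b$ defined by \eqref{bar.H^b} and the safety layer $D^\eps_b$ defined by \eqref{D_b} satisfy the announced properties, and \eqref{safety.layer}, \eqref{small.distance.bad} are inherited from Lemma \ref{l.geometry} up to the harmless multiplicative factor $\theta\Lambda$, absorbed by the prefactor $\eps^{2d\delta}$.
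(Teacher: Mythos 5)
Your proposal captures the spirit of the paper's strategy (extract a finitely-layered, uniformly-dilated subfamily of balls whose levels respect a nesting condition, powered by the probabilistic cluster bound of Section~\ref{s.probability}), but there are three genuine gaps, each of which is precisely what the paper's more elaborate construction is designed to circumvent.

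\textbf{1. The dyadic size bins give an $\eps$-dependent number of levels.} The paper's size classes are the $\eps$-rescaled power-law bins $I^\eps_k = \{ z_i : \eps^{1-\delta k} \le \aeps\rho_i < \eps^{1-\delta(k+1)}\}$ of \eqref{def.I_k}. Combined with the a.s.\ bound \eqref{delta.max} (all radii are $\lesssim \eps^{2d\delta}$), this gives a \emph{fixed} $\km = \km(d,\beta)$ via a single use of Lemma~\ref{l.chains}(I). Your dyadic binning $\aeps\rho_i\in[\theta^m,\theta^{m+1})$ produces $\mathcal O(\log(1/\eps))$ nonempty bins; Lemma~\ref{l.chains} bounds cluster \emph{cardinality} inside a comparable-size window, not the number of size scales, so it does not cap the number of non-trivial levels. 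A bounded $\km$ is not a convenience: the iterated construction of $R_\eps$ in Section~\ref{sec:testfunction} (cf.\ \eqref{iteration.estimate}) loses a multiplicative constant at each level, so the entire argument collapses if the number of levels grows as $\eps\to 0$. Relatedly, your treatment of ``anomalously large'' balls via Borel--Cantelli is superfluous: by \eqref{delta.max} none exist a.s.\ for small $\eps$.

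\textbf{2. A single Vitali pass does not terminate the comparable-size merging.} You select a maximal disjoint subfamily of $B_{\theta^2\aeps\rho_j}(\eps z_j)$ with initial $\lambda^\eps_j=1$, then enlarge $\lambda^\eps_j$ to swallow the discarded balls. But once $\lambda_j^\eps$ exceeds $1$, the dilated balls $B_{\theta^2\lambda_j^\eps\aeps\rho_j}$ may overlap, so \eqref{similar.size.apart} fails and a fresh selection is needed---which again enlarges the $\lambda_j^\eps$'s, and so on. The paper addresses this with the dynamical system $T^{\Psi,\alpha}$ of \eqref{dyn.system}, iterated on two consecutive bins $I_k\cup I_{k-1}$ at a time; the essential input is that Lemma~\ref{l.chains}(II) guarantees stabilization after $M=M(d,\beta)$ iterations, producing the uniform bound $\tilde\Lambda=(2\alpha M)^{(\km+3)M}$. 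This iteration-and-termination step is absent from your proposal and cannot be replaced by a one-shot Vitali argument.

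\textbf{3. The cross-scale handling goes the wrong way.} For \eqref{small.dont.intersect.big} you propose to enlarge an already-selected $\lambda_j^\eps$ to absorb any intersecting smaller ball; but the cumulative enlargement from many peripheral small balls is not bounded by a geometric series in the level gap, and increasing $\lambda_j^\eps$ after the fact would again destroy \eqref{similar.size.apart} at the earlier level. The paper's mechanism is dual: the earlier $\lambda_j^\eps$ are fixed once and for all by Step~1, the sets $E_l$ of \eqref{def.E_l} are built top-down by \emph{cutting out} the annuli $B_{\theta\lambda_j\aeps\rho_j}\setminus B_{\lambda_j\aeps\rho_j}$, and later small balls are either discarded (because their $\theta$-dilation lies in $E_l$) or retained in $J^\eps_{l-1}$; the nontrivial claim \eqref{small.dont.intersect.big.3}, proved by the chain argument \eqref{chain.to.boundary}, then shows that a retained small ball \emph{cannot} intersect any earlier $\theta\lambda_j$-ball at all. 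Nothing gets absorbed, and nothing gets re-enlarged.
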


\begin{rem}\label{real.property}
 As explained in Subsection \ref{sec:ideas}, property \eqref{small.dont.intersect.big} is crucial for the construction of the operator $R_\eps$ of Lemma \ref{reduction.operator}.
 However, it slightly differs from property (b) stated in that section. Namely, the balls $B_{\aeps \theta \lambda_j^\eps \rho_j}(\eps z_j)$, $z_j \in J^\eps_l$ might intersect with some of the balls in $H_b^\eps$ that are contained in $B_{\aeps\lambda_i^\eps \rho_i}(\eps z_i)$ for $z_i \in J^\eps_k$, $k > l$.
 This is why the additional index sets $\mathcal I_k^\eps$ are introduced. In these index sets, the balls are not ordered by size, but in such a way that \eqref{small.dont.intersect.big} holds. More precisely, if a ball in $H_b^\eps$ is contained in several of the dilated balls in $J^\eps$,
 we will put it into the index set $\mathcal I_k$ with $k$ minimal such that it is contained in a dilated ball in $J_k^\eps$.
\end{rem}

\subsection{Structure and main ideas in the proof of Lemma \ref{l.geometry} and Lemma \ref{l.geometric.v2}.}
Since the proof of Lemma \ref{l.geometric.v2} requires different steps and technical constructions, we give a sketch of the ideas behind it. It is clear that Lemma \ref{l.geometry}
 follows immediately from Lemma \ref{l.geometric.v2}; we thus only need to focus on the proof of this last result.

\smallskip 

To this end we introduce the following notation, which we will also use
throughout the rigorous proof of Lemma \ref{l.geometric.v2} in Section 5: Let
\begin{align}\label{def.kappa}
\delta :=  \frac{\beta}{2(d-2)(d-2+\beta)} \wedge \frac{\beta}{2d}
\end{align}
and
	\begin{align}
		\label{def.I_k}
		I^\eps_k := 
		\begin{cases}
		\{ z_i \in \Phi^\eps(D) \ \colon \ \eps^{1 - \delta k} \leq \aeps \rho_i < \eps^{1 - \delta (k+1)} \} \ \ \ &k \geq -2\\
		\{ z_i \in \Phi^\eps(D) \ \colon \aeps \rho_i < \eps^{1 +2\delta} \} \ \ \ \ &k= -3.
		\end{cases}
	\end{align}
	Note that $\Phi^\eps(D) = \bigcup_{k \geq -3} I^\eps_k$. We remark that the sets $I^\eps_k$ correspond to $I^\eps_{\delta, k}$ in \eqref{def.I_k_delta} of Section \ref{s.probability} with $\delta$ as in \eqref{def.kappa}.
Since we chose $\delta$ above such that $\delta < \frac{\beta}{2d}$, we may apply Lemma \ref{l.chains} with this choice of $\delta$ and infer that there exists
$\km \in \N$ such that $I^\eps_k = \emptyset$ for all $k > \km$. From now on, we assume that $\km$ is chosen in this way and thus that 
\begin{align}
 \Phi^\eps(D) = \bigcup_{k = -3}^{\km} I^\eps_k.
\end{align}
In addition, since we may bound
\begin{align}
 \aeps \max_{\Phi^\eps(D)} \rho_i \leq \eps^{\frac{d}{d-2} - \frac{d}{d-2+\beta}} \bigl(\eps^d \sum_{z_i\in \Phi^\eps(D)}\rho_i^{d-2 +\beta} \bigr)^{\frac{1}{d-2+\beta}},
\end{align}
we use \eqref{power.law} and the Strong Law of Large Numbers, to infer that almost surely and for $\eps$ small enough
\begin{align}
 \aeps\max_{\Phi^\eps(D)} \rho_i \lesssim \eps^{\frac{d}{d-2} - \frac{d}{d-2+\beta}}\langle \rho^{d-2+\beta} \rangle^{\frac{1}{d-2+\beta}}.
\end{align}
This implies by \eqref{def.kappa} that
\begin{align}
	\label{delta.max}
	\max_{z_i \in \Phi^\eps(D)} \aeps \rho_i \lesssim \eps^{2d\delta}.
\end{align}

\medskip

\noindent {\bf Step 1: Combining clusters of holes of similar size: }  We begin obtaining a first covering of $H^\eps$ made by a union of balls which, if of comparable size, are disjoint even if dilated by a constant factor $\alpha > 1$. Roughly speaking, we do this by merging the balls of $H^\eps$ generated each family $I^\eps_k \cup I^\eps_{k-1}$, in holes of similar size which which are also disjoint. More precisely, we prove:

\smallskip

\noindent {\bf Claim: } Let $\alpha > 1$. Then, there exists $\tilde \Lambda= \tilde \Lambda(d,\beta,\alpha) > 0$ such that for $\P$-almost every 
	$\omega \in \Omega$ and all $\eps < \eps_0(\omega)$ and all $ - 3 \leq k \leq k_{\max}$
	there are $\tilde I_k^\eps \subset I_k^\eps$ and $\{\tilde \lambda_j^\eps\}_{z_j \in \tilde I_{k}} \subset [1, \tilde\Lambda]$
	 with the following properties:
	 \begin{align}
	 	\label{covering.comparable}\\
	 	\forall z_i \in I_k^\eps \ \exists \, z_j \in \bigcup_{l \geq k} \tilde I_l^\eps\ \colon \ B_{ \aeps \rho_i}(\eps z_i) \subset B_{ \aeps\tilde \lambda_j^\eps \rho_j}(\eps z_j) .
	 \end{align}
	  For each $ - 3 \leq k \leq k_{\max}$ the balls
	 \begin{align}
	 	\label{no.intersection.comparable}
	 	\bigg\{ B_{ \aeps \alpha \tilde \lambda_i^\eps\rho_i}(\eps z_i) \bigg\}_{z_i \in \tilde I^\eps_k \cup \tilde I^\eps_{k-1}}\quad \text{are pairwise disjoint}.
	 \end{align}

Note that ``most'' of the balls generated by the points in $I^\eps_{-2} \cup I^\eps_{-3}$ already satisfy \eqref{no.intersection.comparable} with $\lambda^\eps_i =1$. Hence, $\tilde I_{-3}^\eps$ contains most of the points of $I^\eps_{-3}$. The only elements of $I^\eps_{-2} \cup I^\eps_{-3}$ which might violate this conditions are the ones which are too close to each other. We will show that, since the  collection $I^\eps_{-2} \cup I^\eps_{-3}$ is generated by a Poisson point process, these exceptional points are few for small values of $\eps > 0$.

\smallskip

 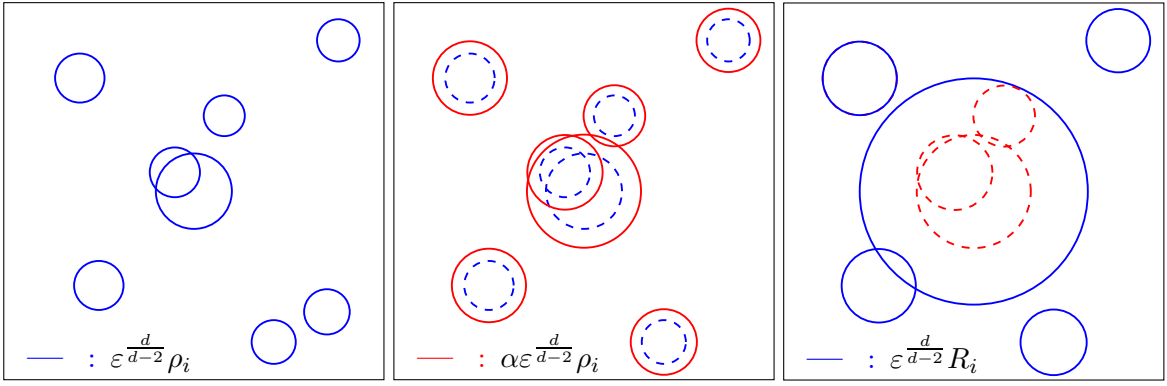
\begin{figure}
 \begin{minipage}[r]{5cm}
 \begin{tikzpicture}[scale=0.5]
\draw[blue, line width=0.25mm](2,2) circle (1);
\draw[blue, line width=0.25mm](1.5,2.5) circle (0.66);
\draw[blue, line width=0.25mm](2.8, 4)circle(0.54);
\draw[blue, line width=0.25mm](-0.5,-0.5) circle (0.65);
\draw[blue, line width=0.25mm](-1,5) circle (0.65);
\draw (-3,7) rectangle (7,-3);

\draw[blue, line width=0.25mm](4.1,-2) circle (0.58);
\draw[blue, line width=0.25mm](5.5, -1.2)circle(0.6);
\draw[blue, line width=0.25mm](5.8, 6)circle(0.56);
\node[text width=2.5cm] at (0.4,-2.3) {{\blue {\bf \large\textemdash} \, :} $\aeps\rho_i$}
;\end{tikzpicture}
 \end{minipage}
  \begin{minipage}[r]{5cm}
 \begin{tikzpicture}[scale=0.5]
\draw[blue, dashed, line width=0.25mm](2,2) circle (1);
\draw[blue, dashed, line width=0.25mm](1.5,2.5) circle (0.66);
\draw[blue, dashed, line width=0.25mm](2.8, 4)circle(0.54);
\draw[blue, dashed, line width=0.25mm](-0.5,-0.5)  circle (0.65);
\draw[blue, dashed, line width=0.25mm](-1,5) circle (0.65);

\draw[blue, dashed, line width=0.25mm](4.1,-2) circle (0.58);

\draw[blue, dashed, line width=0.25mm](5.8, 6)circle(0.56);
\draw[red, line width=0.25mm](2,2) circle (1.5);
\draw[red, line width=0.25mm](1.5,2.5) circle (0.99);
\draw[red, line width=0.25mm](2.8, 4)circle(0.81);
\draw[red, line width=0.25mm](-0.5,-0.5)  circle (0.975);
\draw[red, line width=0.25mm](-1,5) circle (0.975);
\draw[red, line width=0.25mm](4.1,-2) circle (0.87);
\draw[red, line width=0.25mm](5.8, 6)circle(0.84);
\draw (-3,7) rectangle (7,-3);
\node[text width=2.5cm] at (0.4,-2.3) {{\red {\bf \large \textemdash} \, :} $\alpha\aeps \rho_i$}
;\end{tikzpicture}
 \end{minipage}
  \begin{minipage}[r]{5cm}
 \begin{tikzpicture}[scale=0.5]
\draw[blue, line width=0.25mm](2,2) circle (3);
\draw[red,dashed, line width=0.25mm](2,2) circle (1.5);
\draw[red, dashed, line width=0.25mm](1.5,2.5) circle (0.99);
\draw[red,dashed, line width=0.25mm](2.8, 4)circle(0.81);
\draw[red, dashed, line width=0.25mm](-1,5) circle (0.975);
\draw[blue, line width=0.25mm](-0.5,-0.5) circle (0.975);
\draw[blue, line width=0.25mm](-1,5) circle (0.975);
\draw[blue, line width=0.25mm](4.1,-2) circle (0.87);
\draw[blue, line width=0.25mm](5.8, 6)circle(0.84);
\draw (-3,7) rectangle (7,-3);
\node[text width=2.5cm] at (0.4,-2.3) {{\blue {\bf \large \textemdash} \, :} $\aeps R_i$}
;\end{tikzpicture}
 \end{minipage}
 \caption{{\small This sequence of pictures shows how to implement the algorithm of Step 1. From left to right: We begin with an initial configuration of comparable balls generated by centres in $I^\eps_{-3} \cup I^\eps_{-2}$ and with associated radii $\aeps \rho_i$. In the picture in the middle, the full line represents a dilation by a factor $\alpha = 1.5$ of this initial configuration (here drawn with a dashed line). In the last picture, the full line represents the new configuration obtained with the modified radii $R_i$ which covers all the dilated balls of the previous figure (here drawn with a dashed line). }}  \label{strategy.step1}
 \end{figure}

To construct the sets $\tilde I_k$ above we adopt the following strategy (see Figure \ref{strategy.step1} for a sketch): 
 \begin{itemize}
 
 \item Let $\alpha > 1$ and $-2 \leq k \leq \km$ be fixed. We multiply each one of the radii $\{\rho_i \}_{z_i \in I_k^\eps \cup I_{k-1}^\eps}$ by $\alpha$ and consider the set of balls
\begin{align}\label{collection.inflated}
 \bigg\{ B_{\alpha \aeps  \rho_i}(\eps z_i) \bigg\}_{z_i \in I^\eps_k \cup  I^\eps_{k-1}}.
 \end{align}
 For each point $z_i \in I^\eps_k \cup  I^\eps_{k-1}$ we now define a new radius $R_i^\eps$ in the following way:
 For each disjoint ball in the previous collection we set $R_i^\eps:= \rho_i$. We now consider the balls which are 
 not disjoint: For each connected component $C_k^\eps$ of \eqref{collection.inflated}, we pick on of the largest balls belonging to $C_k^\eps$, say $B_{\alpha \aeps \rho_l}(\eps z_l)$,
 and set $R_l^\eps$ as the minimal one such that $C_k^\eps \subset B_{\aeps R_l^\eps}(\eps z_l)$. We set $R_i^\eps=0$ for all the $z_i \neq z_l$ generating the balls contained in $C_k^\eps$. 
We thus have a new collection of radii $\{ R_i^\eps\}_{z_i \in I_k^\eps \cup I_{k-1}^\eps}$.

\medskip

 \item We multiply each $R_i^\eps$ above by the same factor $\alpha$ of the previous step and repeat the construction sketched above with $\rho_i$ substituted by $R_i^\eps$.
 
 \medskip

 \item We show that, almost surely, after a number $M= M(d, \beta)< +\infty$ of iterations of the previous two steps, all the radii $R_i^\eps$ obtained at the $M^{th}$-step do not change
 any further. This means that the balls $B_{\aeps R_i^\eps}(\eps z_i)$, for $R_i^\eps \neq 0$, satisfy \eqref{covering.comparable} and \eqref{no.intersection.comparable}. Moreover, we may easily bound each ratio $\frac{R_i^\eps}{\rho_i} =: \tilde\lambda_i^\eps  \leq \tilde\Lambda$.
 
 \smallskip
 
 The key idea to prove the existence of the threshold $M$ is that the configurations $\omega\in \Omega$ for which the radii $R_i$'s obtained after $M$ iterations continue to
 change is related to events of the form 
$$
\text{ \textit{``There exist $M+1$ balls in $I^\eps_k \cup  I^\eps_{k-1}$ which are connected when dilated by $C(\alpha,M)$''}}.
$$
By Lemma \ref{l.chains}, this event has zero probability for $\eps$ sufficiently small.  

\medskip
 
\item The construction above can be expressed by a dynamical system (cf. \eqref{dyn.system}).

\medskip

\item We iterate this process for $I_{k}^\eps \cap I_{k-1}^\eps$, $-2 \leq k \leq \km$ starting from $k=-2$, each time working with the dilated radii 
that we got from the previous step.
 \end{itemize}
  
  \medskip
 
 {\bf Step 2: Construction of the sets $\I^\eps$ and $J^\eps$ :}
 Let us set $\theta = \alpha^{\frac 1 4} \geq 1$, with $\alpha \geq 1$ as in Step 1 (see \eqref{no.intersection.comparable}).  In the previous step we extracted from each family $I^\eps_k$ generating the whole $\Phi^\eps(D)$ a sub-collection $\tilde I^\eps_k$. These sub-collections provide a covering for the whole set $H^\eps$ and satisfy \eqref{no.intersection.comparable}. The aim of this step is to use the previous result to find a way to extract from $\Phi^\eps(D)$ the subset $\I^\eps$ generating the bad holes and to construct the covering $\bar H^\eps_b$.
 
 \medskip
 
 We remark that, if we set $\lambda_i = \theta^2 \tilde\lambda_i$,  the covering
 \begin{align}
  \bigcup_{k=-3}^{\km}\bigcup_{z_j \in \tilde I^\eps_k} B_{\aeps \tilde\lambda^\eps_j \rho_j}(\eps z_j) \supseteq H^\eps
 \end{align}
 satisfies  \eqref{similar.size.apart} thanks to \eqref{no.intersection.comparable}.
  
 \smallskip
 
 The construction of this step is based on the following simple geometric fact: Let $z_1 \in \tilde I_{k_1}^\eps$ and $z_2 \in \tilde I_{k_2}^\eps$ with $ k_1 < k_2-1$. Since by construction 
 we had $\tilde I_k^\eps \subset \I_{k}^\eps$, this means by definition \eqref{def.I_k} of the sets $I_k^\eps$ that $\aeps \rho_1 \leq \eps^\delta \aeps \rho_2$ and thus that the
 ball $B_{\aeps \rho_1}(\eps z_1)$ is much smaller than $B_{\aeps \rho_2}(\eps z_2)$.
 Therefore, for $\eps \leq \eps_0(d, \beta, \theta)$ we have that 
  \begin{align}\label{basic.fact.1}
 B_{\aeps \theta^3 \tilde\lambda_1^\eps \rho_1 }(\eps z_1) \cap B_{\aeps \tilde \lambda_2^\eps \rho_2} (\eps z_2) \neq \emptyset \
 \Rightarrow \  B_{\aeps \theta \tilde \lambda_1^\eps \rho_1 }(\eps z_1) \subseteq B_{\aeps \theta^2 \tilde \lambda_2^\eps \rho_2} (\eps z_2).
 \end{align}
Indeed,  if the inequality on the left-hand side above is true, for all $z \in B_{\aeps \theta \tilde \lambda_1^\eps \rho_1 }(\eps z_1)$ we have
$$
\eps | z - z_2| \leq \eps |z- z_1 | + \eps |z_1 - z_2 | \leq \aeps \theta \tilde \lambda_1^\eps \rho_1 + \aeps \theta^3 \tilde \lambda_1^\eps \rho_1 + \aeps \tilde \lambda_2^\eps \rho_2.
$$
Since $\aeps \rho_1 \leq \eps^\delta \aeps \rho_2$ and all $1 \leq \tilde \lambda^\eps_i \leq \tilde \Lambda$, we may choose $\eps^\delta < \frac{\theta^2 -1}{\theta\tilde \Lambda
(1+\theta^2)}$ and obtain that
$$
\eps | z - z_2| \leq \aeps \theta^2 \tilde\lambda_2^\eps \rho_2,
$$
i.e. the right-hand side in \eqref{basic.fact.1}. 

\smallskip

By relying on \eqref{basic.fact.1}, we construct the covering $J^\eps$ in the following way:
 \begin{itemize}
 \item We start with $k_{max}$ and set $J_{\km}^\eps= \tilde I_{\km}^\eps$ and $J_{\km -1}^\eps = \tilde I_{\km-1}^\eps$. We know that all the balls of the form 
 $B_{\aeps \tilde\lambda_i^\eps \rho_i}(\eps z_i)$ generated by $z_i \in \tilde I_{\km}^\eps \cup \tilde I_{\km-1}^\eps$ are disjoint in the sense of \eqref{no.intersection.comparable} 
 (recall that $\theta^4 = \alpha$).
 The same holds for the balls $B_{\aeps \tilde\lambda_j^\eps \rho_j}(\eps z_j)$ generated by the centres in  $\tilde I_{\km-2}^\eps \cup \tilde I_{\km-1}^\eps$. 
 We thus focus on the intersections between the balls generated by $\tilde I_{\km-2}^\eps$ and $\tilde I_{\km}^\eps$.
 
 \medskip
 
 \item We show how to obtain the set $J_{\km-2}^\eps$ from $\tilde I_{\km-2}^\eps$ in such a way that \eqref{small.dont.intersect.big} is satisfied by this family. 
 We begin by dilating the balls generated by the centres in $J_{\km}^\eps$ of a factor $\theta^2$ and thus obtain the set
 $$
 E_{\km}^\eps= \bigcup_{z_j \in J_{\km}^\eps}B_{\aeps \lambda_j^\eps \rho_j}(\eps z_j)
 $$ 
 (we recall that $\lambda_j^\eps = \theta^2 \tilde \lambda_j^\eps$). We define 
 $$
 J_{\km -2}^\eps := \{ z_i \in \tilde \I_{\km-2}^\eps \, \colon \, B_{\aeps \theta \tilde \lambda_i^\eps \rho_i}(\eps z_i) \nsubseteq E_{\km}^\eps\}.
 $$
 Note that with this definition, for all $z_j \in  J_{\km-2}^\eps$ and every $z_i \in J_{\km}^\eps$ we have that
 $$
 B_{\aeps \theta \tilde \lambda_i^\eps \rho_i}(\eps z_i) \nsubseteq B_{\aeps \lambda_j^\eps \rho_j}(\eps z_j)
 $$
 and thus by property \eqref{basic.fact.1} (with $z_i = z_1$ and $z_j= z_2$) that 
 \begin{align}
B_{\aeps \theta \lambda_i^\eps \rho_i}(\eps z_i) \cap B_{\aeps \tilde \lambda_j^\eps \rho_j}(\eps z_j) = \emptyset.
 \end{align}
Since $\tilde \lambda^\eps_j \geq 1$, the previous equality implies that the collection $J_{\km-2}^\eps$ satisfies condition \eqref{small.dont.intersect.big}.

\medskip

 \item We now iterate the previous construction: We define 
 $$
 E_{\km-1}^\eps= E_{\km}^\eps \cup  \bigcup_{z_i \in J_{\km-1}^\eps}B_{\aeps \lambda_i^\eps \rho_i}(\eps z_i)
 $$
 and
 $$
 E_{\km -2}^\eps= ( E_{\km-1}^\eps \backslash
 \bigcup_{z_i \in J_{\km-2}^\eps}B_{\aeps \theta \lambda_i^\eps \rho_i}(\eps z_i) ) \cup \bigl(\bigcup_{z_i \in J_{\km-2}^\eps}B_{\aeps \lambda_i^\eps \rho_i}(\eps z_i) \bigr).
 $$
 Note that in the definition of this last set we need to remove the annuli 
 $$
 B_{\aeps \theta \lambda_i^\eps \rho_i}(\eps z_i) \backslash B_{\aeps \lambda_i^\eps \rho_i}(\eps z_i)
 $$
  in order to be able to iterate the argument of the previous step (see Figure \ref{setE} for an illustration of the construction of the set $E_{\km - 2}$).

 \begin{figure}
 \begin{minipage}[r]{5cm}
 \begin{tikzpicture}[scale=0.5]
\node[text width=2cm] at (-0.8,6.5) {\tiny  Set $E_{k_{max} -1}$:};
\draw[blue, line width=0.25mm](0,0) circle (2.5);
\draw[blue, line width=0.25mm](4,3.8) circle (2.3);
\path[draw, blue, pattern=horizontal lines](4,3.8) circle (2.3);
\path[draw, blue, pattern=horizontal lines](0,0) circle (2.5);
\draw[red,dashed, line width=0.25mm](4.6,4.6) circle (0.6);
\draw[red,dashed, line width=0.25mm](4.6,4.6) circle (1.2);
\draw[red,dashed, line width=0.25mm](-1.8,3) circle (0.6);
\draw[red,dashed, line width=0.25mm](-1.8,3) circle (1.2);
\draw[red,dashed, line width=0.25mm](2,2) circle (0.7);
\draw[red,dashed, line width=0.25mm](2,2) circle (1.4);
\draw[red,dashed, line width=0.25mm](5,-1) circle (0.65);
\draw[red,dashed, line width=0.25mm](5,-1) circle (1.3);
\draw (-3,7) rectangle (7,-3);
;\end{tikzpicture}
 \end{minipage}
  \begin{minipage}[r]{5cm}
 \begin{tikzpicture}[scale=0.5]
\node[text width=4cm] at (1.2,6.5) {\tiny  Find the centres in $J_{k_{max} -2}$:};
\draw[blue, dashed,line width=0.25mm](0,0) circle (2.5);
\draw[blue,dashed, line width=0.25mm](4,3.8) circle (2.3);
\draw[red, line width=0.25mm](-1.8,3) circle (0.6);
\draw[red,dashed, line width=0.25mm](-1.8,3) circle (1.2);
\draw[red, line width=0.25mm](2,2) circle (0.7);
\draw[red,dashed, line width=0.25mm](2,2) circle (1.4);
\draw[red, line width=0.25mm](5,-1) circle (0.65);
\draw[red,dashed, line width=0.25mm](5,-1) circle (1.3);
\draw (-3,7) rectangle (7,-3);
;\end{tikzpicture}
 \end{minipage}
  \begin{minipage}[r]{5cm}
 \begin{tikzpicture}[scale=0.5]
\node[text width=2cm] at (-0.8,6.5) {\tiny  Set $E_{k_{max} -2}$:};
\fill[blue](0,0) circle (2.5);
\fill[blue](4,3.8) circle (2.3);
\fill[white](-1.8,3) circle (1.2);
\fill[white](2,2) circle (1.4);
\fill[white](5,-1) circle (1.3);

\fill[blue](-1.8,3) circle (0.6);

\fill[blue](2,2) circle (0.7);

\fill[blue](5,-1) circle (0.65);

\draw (-3,7) rectangle (7,-3);
;\end{tikzpicture}
 \end{minipage}
 \caption{{\small This sequence of pictures shows how to construct $E_{\km-2}$ from $E_{\km-1}$: In the first picture on the left, the set $E_{\km-1}$ is the one filled with horizontal lines. Note that the balls are all disjoint and well-separated. The dashed annuli are the balls generated by centres in $\tilde I_{\km-2}$ and dilated by the factor $\theta$. The circles with the full line in the second picture represent the balls whose centres are in the set $J_{\km-2}$. The third picture shows the set $E_{\km -2}$. }}  \label{setE}
 \end{figure}
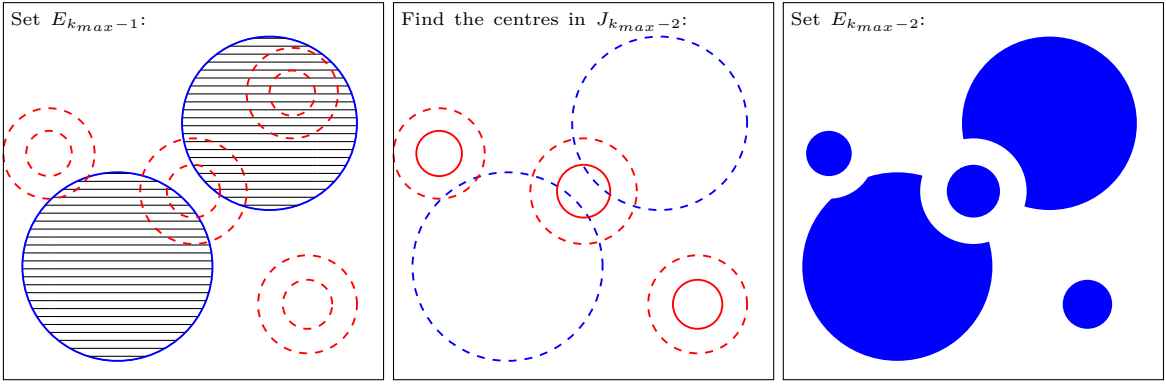

 \medskip
 
 \item We iterate the previous procedure and construct the sets $J_{k}^\eps$, up to $- 2 \leq k \leq \km$. In the last step $k=-3$, we define $J^\eps_{-3}$ as the set of those elements which either intersect $E_{-2}^\eps$ or that are too close to each other. Thanks to this construction, some elements of $\tilde I^\eps_{-3}$, i.e. the holes which are small and well-separated from the clusters and from each others, do not belong to any of the sets $J^\eps_k$ nor are covered by any of the dilated balls generated by these centres. We then show that the remaining elements in $\tilde I^\eps_{-3}$ constitute the set $n^\eps$ generating the holes $H^\eps_g$.  
  
 \medskip
 
\item We finally define and partition the set $\I^\eps$ generating the holes of $H^\eps_b$ by using the sets $\{J_{k}^\eps \}_{-3 \leq k \leq \km}$: We insert in each $\I_k^\eps$ the centres of the balls of $H^\eps$ such that  $k$ is the smallest integer for which $J_k^\eps$ provides a covering. 
\end{itemize}

\medskip

{\bf Step 3. Conclusion.} We show that with these definitions of $J^\eps, \I^\eps_k$ and $\lambda^\eps_j$, the covering obtained in the previous step satisfies all 
the properties of Lemma \ref{l.geometry} and Lemma 
\ref{l.geometric.v2}.

\subsection{Proof of Lemma \ref{l.geometry} and Lemma \ref{l.geometric.v2}.}\label{s.geometry.proof}

\begin{proof}[Proof of Lemma \ref{l.geometric.v2}] 
In the sake of a leaner notation, when no ambiguity occurs we drop the index $\eps$ in the sets of points (e.g. $I^\eps_k, J^\eps_k, \cdots$) and holes which are generated by them. 

\smallskip

\textbf{Proof of Step 1.} 
We start by fixing a (total) ordering $\leq$ of the points in $\Phi^\eps(D)$ such that 
$$
z_i \leq z_j \Rightarrow \rho_i \leq \rho_j,
$$
 with $\rho_i$ and $\rho_j$ the radii of the balls in $H^\eps(D)$ centred in $z_i$ and $z_j$, respectively. We fix $\alpha > 1$ and set $C_0(\alpha, M) = (2\alpha M)^{M(k_{max} + 3)} < +\infty $, where $M= M(\beta, d) \in \N$ is as in Lemma \ref{l.chains}. We only consider $\omega \in \Omega$ belonging to the full-probability subset of $\Omega$ satisfying  Lemma \ref{l.chains} with $\alpha=C_0$ and $\delta$ as in \eqref{def.kappa}.

\smallskip

We introduce some more notation which is needed to implement the construction sketched in Step 1: Let $\Psi^\eps \subset \Phi^\eps(D)$ be any sub-collection of centres and let $\rr^\eps = \{R_i\}_{z_i \in \Psi_\eps} \subset \R_+^{\#\Psi^\eps}$ be their associated radii. Throughout this proof, unless there is danger of ambiguity, we forget about the dependence of both $\Psi$ and $\rr$ on $\eps$. For any two centres $z_i, z_j \in \Psi$ with radii $R_i$ and $R_j$, respectively, we write
  \begin{align}\label{cluster.connection}
   z_i \stackrel{\alpha}{-} z_j \Leftrightarrow \ \ B_{\alpha \aeps R_j}(\eps z_j) \cap B_{\alpha \aeps R_i}(\eps z_i) \neq \emptyset.
  \end{align}
 We define a notion of connection between points and associated radii in the following way: We say that $(z_i, R_i)$ and $(z_j, R_j)$ are connected, and we write that $z_i \sim_{(\Psi, \rr),\alpha} z_j$ whenever 
  \begin{align}\label{equivalence.relation}
 \text{ $\exists \ z_1, \cdots z_m \in \Psi$ s.t. } \ z_i \stackrel{\alpha}{-} z_1 \stackrel{\alpha}{-}\cdots \stackrel{\alpha}{-} z_m \stackrel{\alpha}{-} z_j.
\end{align}
This equivalence relation depends on $\eps$, but we forget about it in the notation. We use the notation  $[z_i](\Psi, \rr,\alpha)$ for  each equivalence class with respect to the previous equivalence relation $\sim_{(\Psi, \rr) \alpha}$. Each equivalence class constitutes a cluster of balls in the sense of \eqref{cluster.connection}.

\smallskip

By using this notation we may reformulate the result of Lemma \ref{l.chains}: 
For almost every $\omega \in \Omega$, every $\eps \leq \eps_0(\omega, d, \beta)$ and any $k \geq -2$, if we choose  $\Psi = I_k \cup I_{k-1}$, and $\rr = \{ \rho_i\}_{z_i \in \Psi}$, we have
\begin{align}\label{reformulation.equivalence.class}
 \sup_{z \in \Psi}\bigr( \# [ z ]( \Psi, {\rr}, C_0) \bigr) \leq M,
\end{align}
i.e. every equivalence class contains at most $M$ elements of $\Psi$. From now on, we thus fix $\omega \in \Omega$ and $\eps \leq \eps_0(\omega,d, \beta)$ satisfying this bound.

\smallskip

Given $\Psi \subset \Phi^\eps(D)$, we introduce the map $T^{\Psi, \alpha} : \R_+^{\#\Psi} \rightarrow \R_+^{\#\Psi}$ which acts on $\rr= \{ R_i\}_{z_i \in \Psi}$ as
 \begin{align}\label{dyn.system}
 (T^{\Psi, \alpha} ( \rr))_j := \begin{cases}
 0 &\text{ if $\max\{ z_i \in [ z_j]_{\Psi_\rr,\alpha} \} \neq z_j$ }\\
 \max_{z_i \in [ z_j ]_{\Psi_\rr,\alpha} }(\eps^{1-\frac{d}{d-2}}|z_j - z_i| + R_i) &\text{ if $\max\{ z_i \in [ z_j ]_{\Psi_\rr,\alpha} \} = z_j$}
 \end{cases}
 \end{align}
We recall that the maximum above is taken with respect to the ordering $\leq$ between centres of $\Psi^\eps(D)$.  
We observe that \eqref{dyn.system} implies that, if  $[ z_j ]({\Psi, \rr,\alpha}) = \{ z_j \}$, then
\begin{align}
	T^{\Psi, \alpha} ( \rr))_j = R_j.
\end{align}

 \medskip
 
By relying on \eqref{reformulation.equivalence.class}, we use an iteration of the previous map to implement the construction sketched at Step 1. We begin by considering $k=-2$ and setting $\Psi = I_{-2} \cup I_{-3}$ and  $\rr = \{ \rho_i\}_{z_i \in \Psi}$.  We define the dynamical system
 \begin{align}\label{definition.comparable.radii}
 \begin{cases}
 \rr(n)= T^{\Psi, \alpha}( \rr(n-1) ) \ \ \ n \in \N\\
 \rr(0)=  \rr \end{cases}
 \end{align}
  and claim that
  \begin{align}\label{stability}
   \rr(n)&=  \rr(M)\ \ \ \forall n \geq M \\
  (\rr(n))_j &\leq (2 \alpha M)^n \rho_j  \quad  \forall z_j \in \Psi, \ \ \forall n \leq M.
  \label{bound.R_j}
  \end{align}

\smallskip

We start with \eqref{bound.R_j} and prove it by induction over $n \leq M$. By definition (cf. \eqref{definition.comparable.radii}), the inequality trivially holds for $n=0$. Let us now assume that \eqref{bound.R_j} holds for some $0 \leq n < M$. We claim that at step $n+1$, each equivalence class $[ z_i ](\Psi, {\rr(n)}, \alpha )$ contains at most $M$ elements: If otherwise, by the inductive hypothesis \eqref{bound.R_j} for $n$ and the choice of the constant $ C_0(M, \alpha)$, also the equivalence class $[ z_i ](\Psi, {\rr(0)}, C_0)$ contains more than $M$ elements. Since we chose $\rr(0)= \{ \rho_i\}_{z_i \in \Psi}$,by our choice of $\omega \in \Omega$ and $\eps \leq \eps(\omega,C_0)$,  property \eqref{reformulation.equivalence.class} is contradicted. Thus, each equivalence class $[ z_i ](\Psi, {\rr(n)}, \alpha)$ contains at most $M$ elements. This allows us to bound
 \begin{align}
	(\rr(n+1))_j \stackrel{\eqref{definition.comparable.radii}}{\leq} 2\alpha\sum_{z_i \in [ z_j ]({\Psi, {\rr (n)},\alpha} )} R(n)_i.\stackrel{\eqref{bound.R_j}}{\leq}
	(2\alpha)^{n+1} M^n \sum_{z_i \in [ z_j ]({\Psi, {\rr (n)},\alpha} )} \rho_i
	 \end{align}
We now observe that by construction \eqref{definition.comparable.radii} and definition \eqref{dyn.system}, either $\rr(n+1)_j =0$, and thus the bound \eqref{bound.R_j} holds trivially, or $\rho_j \geq \rho_i$ for all $z_i \in [ z_j ]({\Psi, {\rr (n)},\alpha} )$. Thus, the previous inequality implies that
 \begin{align}\label{dyn.sys.1}
	(\rr(n+1))_j {\leq}
	(2\alpha M)^{n+1} \rho_j,
	 \end{align}	
i.e. inequality \eqref{bound.R_j} for $n+1$. The induction proof for \eqref{bound.R_j} is complete. 

\smallskip

We now show \eqref{stability}: We begin by remarking that, by construction, if we have $\rr(M) \neq \rr (M+1)$, then there exist  $z_1, \cdots, z_{M+1}$ such that
\begin{align}
\bigcup_{k=1}^{M+1}B_{\aeps \rho_k}(\eps z_k) \subset B_{\aeps \rr(M+1)_1}(\eps z_1).
\end{align}
This, together with estimate \eqref{bound.R_j} for $n=M$, implies that the equivalence class
$[ z_i ]({\Psi, {\rr(0)}, C_0})$ contains more than $M$ elements. As above, this contradicts our choice of the realization $\omega \in \Omega$ and $\eps$. We established \eqref{stability}.

Equipped with properties \eqref{bound.R_j} and \eqref{stability} we may set for every $z_i \in \Phi^\eps(D)$
\begin{align}\label{updated.radii}
\rr^{(-2)}_j := \begin{cases}
\rr(M) \ \ \ \ &\text{if $z_i \in I_{-2}\cup I_{-3}$}\\
 \rho_i\ \ \  &\text{otherwise}
\end{cases}
\end{align}
and define
\begin{align}\label{updated.centres}
\tilde I_{-3}:= \{ z_i \in I_{-3} \ \colon \ \rr^{(-2)}_i > 0 \}.
\end{align}
Note that this definition of $\rr^{(-2)}$ implies that the balls
\begin{align}
\{ B_{\alpha \aeps \rr^{(-2)}_i}(\eps z_i ) \}_{z_i \in I_{-2} \cup \tilde I_{-3} }
\end{align}
are pairwise disjoint.

\medskip

We now iterate the previous step up to $k=\km$: For each $-1 \leq k \leq \km$ we define recursively
\begin{align}\label{updated.radii.1}
\rr^{(k)}_j := \begin{cases}
\rr(M) \ \ \ \ &\text{if $z_i \in I_{k}\cup I_{k-1}$}\\
\rr^{(k-1)}\ \ \  &\text{otherwise,}
\end{cases}
\end{align}
where $\rr(M)$ is obtained by solving \eqref{dyn.system} with $\Psi = I_{k} \cup I_{k-1}$ and $\rr(0) = \rr^{(k-1)}$.
We note that for a general $-1 \leq k \leq \km$, \eqref{bound.R_j} turns into
 \begin{align}
  (\rr^{(k)}(n))_j &\leq (2 \alpha M)^{(k+2)M + n} \rho_j \quad \quad \forall z_j \in \Psi, \,  ~  \forall n \leq M.
  \label{bound.R_j.k}
 \end{align}
In fact, since for $n \leq M$ we have $(2 \alpha M)^{(k+2)M + n} \leq C_0 $, property \eqref{stability} follows by this inequality exactly as in the case $k=-2$ shown above.
We emphasize that, by definition \eqref{updated.radii.1}, at each step $k$ we have
that the balls
\begin{align}\label{disjoint.at.each.step}
\{ B_{\alpha \aeps \rr^{(k)}_i}(\eps z_i ) \}_{z_i \in I_{k} \cup \tilde I_{k-1}, \rr^{(k)}_i >0 }
\end{align}
are pairwise disjoint.

\medskip

From the previous construction we construct the sets $\tilde I_k$ and the parameters $\{\tilde \lambda_i\}_{z_i \in \bigcup_{k=-3}^{\km} \tilde I_k}$ of Step 1: 
For every $-3 \leq k \leq k_{max}$, let
\begin{align}\label{iteration.I.tilde}
	\tilde I_{k} &:= \{ z_i \in  \I_k \colon (\rr^{(k+1)}(M))_i > 0 \},\\
	\tilde \lambda_i &= \frac{(\rr^{(k+1)}(M))_i}{\rho_i} \quad \text{for } z_i \in \tilde I_k.
\end{align}

\medskip

By \eqref{bound.R_j.k} and the definition of the sets $\tilde I_k$, we immediately have that each $\tilde \lambda_i \geq 1$ and is bounded by
$\tilde \Lambda := (2\alpha M)^{(\km+3)M}$. It remains to argue that $\tilde I^k$ satisfy  \eqref{covering.comparable} and \eqref{no.intersection.comparable}: Property 
\eqref{covering.comparable} follows immediately from the construction and the definition of  the operator $T^{\Psi, \alpha}$. To prove \eqref{no.intersection.comparable},
we claim that is enough to show that for every  $k= -2, \cdots, \km$ and $z_i \in \tilde \I_{k}$,
\begin{align}\label{dicotomy}
\tilde \lambda_i = \frac{\rr^{(k)}_i}{\rho_i}.
\end{align}
Indeed, if this is true, then  \eqref{no.intersection.comparable} follows immediately from \eqref{disjoint.at.each.step}.

\smallskip

Let $-2 \leq k \leq \km$ be fixed. By \eqref{updated.radii.1}, to show \eqref{dicotomy} it enough to prove that  
\begin{align}
\rr^{(k)}_i = \rr^{(k+1)}_i, \ \ \ \ \text{for all $z_i \in \tilde I_k$.} 
\end{align}
Since by \eqref{updated.radii.1} we have for all  $z_i \in \tilde I_k$ that $\rr^{(k+1)}_i = \rr(M)_i$, with $\rr(M)$ solving
 \begin{align*}
 \begin{cases}
 \rr(n)= T^{\Psi, \alpha}( \rr(n-1) ) \ \ \ n \in \N\\
 \rr(0)=  \rr^{(k)}, \end{cases}
 \end{align*}
we need to make sure that $\rr(n)_i = \rr^{(k)}_i$ for each $1 \leq n \leq M$. By induction we show that for $z_i \in I_k$ we have 
\begin{align}\label{iteration.disjoint}
 \rr(n)_i \neq \rr^{(k)}_i \Rightarrow \rr(n+1)_i = \rr^{(k+1)}=0
\end{align}
This implies \eqref{dicotomy} by definition \eqref{iteration.I.tilde}.

For $n =1$,  property \eqref{iteration.disjoint} is an easy consequence of \eqref{disjoint.at.each.step} for the balls generated by points $z_i \in I_k$.
Let us assume that \eqref{iteration.I.tilde} holds at step $n$. Then, again by \eqref{iteration.I.tilde}, we have that for $z_i \in I_k$ either $\rr(n)_i=0$, or 
$\rr(n)_i = \rr^{(k)}_i$. Thus, if $\rr(n+1)_i \neq \rr(n)_i$, we necessarily have again by \eqref{disjoint.at.each.step} that there exists $z_j \in I_{k+1}$ such that
\begin{equation}
B_{\alpha \aeps\rr^{(n-1)}_j}(\eps z_j) \cap B_{\alpha \aeps \rr(n-1)_i}(\eps z_i) \neq \emptyset.
\end{equation}
This implies that $\rho_j \geq \rho_i$ and in turn that $z_j \geq z_i$. By definition of the map $T^{\Psi,\alpha}$, this yields necessarily that $\rr(n+1)_i=0$. The proof of
\eqref{iteration.disjoint} is complete. This establishes \eqref{dicotomy} and concludes the proof of \eqref{no.intersection.comparable}.

\medskip

{
We conclude this step with the following remark: Let $\Phi_{2\eps^{\delta/2}}^\eps(D)$ be the thinned process (see \eqref{thinned.process}) with $\delta$ fixed as in \eqref{def.kappa}. Moreover, let $S^\eps:= \Phi^\eps(D) \backslash \Phi^\eps_{2\eps^{\delta/2}}(D)$ and
\begin{align}
	\label{I_-3^g,b}
	I_{-3}^g = I_{-3} \cap \Phi^\eps_{2\eps^{\delta/2}}(D), \qquad I_{-3}^b = I_{-3} \setminus I_{-3}^g =  I_{-3} \cap S^\eps.
\end{align}
We claim that, up to taking $\eps_0= \eps_0(d,\beta)$ smaller than above, we have
\begin{align}
	\label{lambda=1.good.particles}
	I_{-3}^g \subset \tilde I_{-3}, \qquad \tilde \lambda_i = 1 \quad \text {for all } z_i \in I_{-3}^g.
\end{align}
As will be shown in the next step, the set  $I_{-3}^g$ contains the set $n^\eps$ generating $H^\eps_g$. 

\smallskip

To show \eqref{lambda=1.good.particles}, we observe that whenever $z_i, z_j \in I_{-3}^g \cup I_{-2}$ with $z_i \neq z_j$, then we may choose $\eps$ small enough to infer that
\begin{align}
	\label{good.particles.separated}
	B_{\alpha \aeps \rho_i}(\eps z_i) \cap B_{\alpha \tilde \Lambda \aeps \rho_j}(\eps z_j) = \emptyset.
\end{align}
Indeed, for  $\eps^{\frac \delta 2 } \leq (\alpha\tilde\Lambda)^{-1}$, we bound
\begin{align}
	\eps |z_i - z_j| \stackrel{\eqref{I_-3^g,b}}{\geq} 2\eps^{1 + \frac \delta 2} \geq 2 \alpha \tilde \Lambda \eps^{1 + \delta}
	\stackrel{\eqref{def.I_k}}{\geq} \aeps (\alpha \rho_i +\tilde \Lambda \rho_j).
\end{align}
This implies that after $M$ iterations of the dynamical system \eqref{dyn.sys.1}, we have $\rr(M)= \rho_i$ for all $z_i \in I^{g}_{-3}$. Thanks to \eqref{iteration.I.tilde}
we obtain \eqref{lambda=1.good.particles}.

}

\bigskip

\textbf{Proof of Step 2.} In this step we rigorously implement the method sketched in Step 2 and construct the sets $J^\eps_k$ as subsets of  $\tilde I^\eps_k$, 
$-3 \leq k \leq \km$. We define $\lambda_j = \theta^2 \tilde \lambda_j$, with $\tilde \lambda_j \in [1, \tilde\Lambda]$ constructed in Claim 1 of Step 1,
and $\theta^4=\alpha$. Clearly, we may choose the upper bound $\Lambda$ in the statement of Lemma \ref{l.geometric.v2} as $\Lambda := \theta \tilde\Lambda$.
We start by setting
\begin{align}\label{def.J.max}
	J_{k_{max}} &:= \tilde I^\eps_{k_{max}}, \\
	E_{k_{max}} &:= \bigcup_{z_j \in J_{k_{max}}}  B_{\lambda_j \aeps \rho_j} (\eps z_j),
\end{align}
and inductively define for $-1 \leq l \leq k_{max}$
\begin{align}
	\label{def.J_l}
	J_{l-1} &:= \biggl\{z_j \in \tilde I_{l-1} \colon B_{\theta \tilde \lambda_j \aeps \rho_j} (\eps z_j) \not \subset E_l\biggr \}, \\
	\label{def.E_l}
	E_{l-1} &:= \biggl( E_l \backslash \bigcup_{z_j \in J_{l-1}}  B_{\theta \lambda_j \aeps \rho_j} (\eps z_j) \biggr) \cup  \bigcup_{z_j \in J_{l-1}} B_{\lambda_j \aeps \rho_j} (\eps z_j).
\end{align}
To construct the remaining sets $J_{-3}$ and $E_{-3}$, we need an additional step: We recall the definition of $S^\eps$ and $I_{-3}^g$ from
\eqref{thinned.process} and \eqref{I_-3^g,b}, respectively. We first set
\begin{align}\label{E.tilde}
	\tilde J_{-3} &:= \biggl\{z_j \in \tilde I_{-3} \cap S^\eps \ \colon \ B_{\theta \lambda_j \aeps \rho_j} (\eps z_j) \not \subset E_{-2} \biggr \}, \\	
	\tilde E_{-3} &:= \biggl( E_{-2} \backslash \bigcup_{z_j \in \tilde J_{-3}}  B_{\theta \lambda_j \aeps \rho_j} (\eps z_j) \biggr)
	 \cup  \bigcup_{z_j \in \tilde J_{-3}} B_{\lambda_j \aeps \rho_j} (\eps z_j).
\end{align}
Finally, for $z_i \in \Phi^\eps(D)$ we define the set  
\begin{align}\label{def.K.eps}
	K^\eps :=\biggl\{ z_j \in   I^g_{-3}  \  \colon \  B_{2 \eps^{1+\delta}}(\eps z_j) \cap  \bigcup_{z_i \in \cup_{k=-2}^{\km} J_k \cup \tilde J_{-3}} B_{\theta \lambda_i \aeps \rho_i}(\eps z_i) \neq \emptyset \biggr\},
\end{align} 
and finally consider
\begin{align}\label{def.J_-3}
	J_{-3} &:=  \tilde J_{-3} \cup \biggl\{ z_j \in K^\eps \colon  B_{\theta \lambda_j \aeps \rho_j} (\eps z_j)  \not \subset \tilde E_{-3} \biggr\}, \\
	\label{def.E_-3}
	\tilde E_{-3} &:= \biggl( E_{-2} \backslash \bigcup_{z_j \in  J_{-3}}  B_{\theta \lambda_j \aeps \rho_j} (\eps z_j) \biggr)
	 \cup  \bigcup_{z_j \in J_{-3}} B_{\lambda_j \aeps \rho_j} (\eps z_j).
\end{align}

We remark that in the definitions of $E_l$, the annuli $  B_{\theta\lambda_j \aeps \rho_j} (\eps z_j) \backslash B_{\lambda_j \aeps \rho_j} (\eps z_j)$ are cut out in order to satisfy  \eqref{small.dont.intersect.big}. 
Moreover, we observe that each connected component of the set $E_{k}$ is a subset of $B_{\lambda_{j} \aeps \rho_{j}}(\eps z_{j})$ for some $z_{j} \in J_l$, for $k \geq l$. This follows from the the definition of $E_k$ and \eqref{no.intersection.comparable}.

\smallskip

We finally denote
\begin{align}\label{def.J}
	J := \bigcup_{k=-3}^{k_{max}}  J_{k}.
\end{align}
and define the set $\I$ of the centres generating $H^\eps_b$ as
\begin{align}\label{def.bad.set.0}
	\mathcal I &:= \Bigl\{ z_i \in \Phi^\eps(D) \ \colon \ B_{\aeps \rho_i}(\eps z_i) \subset B_{\lambda_j \aeps \rho_j}(\eps z_j) \text{ for some } z_j \in J \Bigr\}, \\
	\label{def.bad.set}
	\mathcal I_k &:=  \Bigl\{ z_i \in \mathcal I \colon k \text{ is minimal such that } B_{\aeps \rho_i}(\eps z_i) \subset B_{\lambda_j \aeps \rho_j}(\eps z_j) 
	\text{ for } z_j \in J_k \Bigr\}.
\end{align}
Equipped with the previous definition, we construct  $H_b^\eps$, $\bar H_b^\eps$ and $D^\eps_b$ as shown in \eqref{H^b}, \eqref{bar.H^b}, and \eqref{D_b}.

\bigskip

{\bf Proof of Step 3.} We first argue that the sets $H_b^\eps, \bar H^\eps_b$ and $D^\eps_b$ constructed in the previous step satisfy the conditions of Lemma
\ref{l.geometry}. 

\smallskip

We begin by claiming that 
\begin{align}
	\label{char.n_eps}
	n_\eps = I^g_{-3} \backslash K^\eps,
\end{align}
with $K^\eps$ defined in \eqref{def.K.eps}. Since, by construction we set $H^\eps_g = H^\eps \backslash H^\eps_b$, by \eqref{H^b} this also reads as  
\begin{align}\label{char.n.eps.1}
 \Phi^\eps(D) \backslash \I =  I^g_{-3} \backslash K^\eps.
\end{align}
The $\supseteq$-inclusion is a consequence of the fact that by \eqref{lambda=1.good.particles} we have by construction $I^g_{-3} \cap \tilde J_{-3} = \emptyset$ (see \eqref{E.tilde}, 
\eqref{thinned.process}). This yields that in the definition \eqref{def.J_-3} of $J_{-3}$ the only elements of $I^g_{-3}$ in $J_{-3}$ are the ones contained in $K^\eps$.
By
\eqref{def.J_l} and \eqref{def.J}, this yields that $(I^g_{-3} \backslash K) \cap J = \emptyset$. We now use \eqref{def.bad.set} to infer that also
$(I^g_{-3} \backslash K^\eps) \cap \I = \emptyset$, i.e. the $\supset$-inclusion in \eqref{char.n.eps.1}.

\smallskip

For the $\subset$ inclusion we argue the complementary statement which, by \eqref{I_-3^g,b}, also reads as
\begin{align}\label{first.inclusion}
 K^\eps \cup \bigcup_{k \geq -2}I^\eps_{k} \cup I^b_{-3} \subset \I.
\end{align}
We show how to argue that $I_k \subset \I$, for some $k \geq -2$. The argument for the other sets is analogous.

\smallskip

Let $z_i \in I_k$. Then, by \eqref{covering.comparable}, 
there exists $l \geq k$, $z_{j_1} \in \tilde I_l$ such that 
$$
B_{\aeps \rho_i}(\eps z_i) \subset B_{\aeps \tilde \lambda_{j_1} \rho_{j_1}}(\eps z_{j_1}).
$$
By definition \eqref{def.J_l}, this yields that either $z_{j_1} \subset J_l$ or 
$$
B_{\aeps \theta \tilde \lambda_{j_1} \rho_{j_1}}(\eps z_{j_1}) \subset E_{l+1}.
$$
In the first case, it is immediate that $z_i \in \I$ (see \eqref{def.bad.set.0}); in the second case, since each connected component of the set $E_{l+1}$ is a subset of 
a ball $B_{\lambda_{j_2} \aeps \rho_{j_2}}(\eps z_{j_2})$ for some $z_{j_2} \in J_{l_2}$ with $l_2 > l_1$,  
it follows that 
$$
B_{\aeps \rho_i}(\eps z_i) \subset B_{\lambda_{j_2} \aeps \rho_{j_2}}(\eps z_{j_2}).
$$
Hence, also in this case $z_i \in \mathcal I$. We established $I_k \subset \I$. This concludes the proof of \eqref{first.inclusion} and thus also of 
\eqref{char.n.eps.1} and \eqref{char.n_eps}.

\smallskip

From identity \eqref{char.n_eps}, the second line of \eqref{good.set} immediately follows by \eqref{I_-3^g,b} and definition \eqref{def.I_k} for the set $I_{-3}$. 
In addition, since $K^\eps$ is not contained in $n^\eps$, also the first inequality in \eqref{safety.layer} holds. The remaining claims in \eqref{safety.layer}, \eqref{good.set}, and \eqref{small.distance.bad} may be obtained from \eqref{first.inclusion} similarly to \cite{GHV1}[Lemma 4.2], thanks to the very explicit definition of the sets $\bar H^\eps_b$ and $D^\eps_b$. 

\medskip

In the sake of completeness we give these arguments explicitly: We claim
\begin{align}
	\label{I.small}
	\lim_{\eps \downarrow 0} \eps^{d} \#(\I) = 0.
\end{align}
By taking the complement with respect to $\Phi^\eps(D)$ in \eqref{char.n.eps.1}, we have
\begin{align}
	\I = \bigcup_{k=-2}^{\km} I_k \cup I_{-3}^b \cup K^\eps.
\end{align}
We estimate the limit for $\eps \downarrow 0^+$ for the first sets on the right-hand side by appealing to Lemma \ref{SLLN.general.pp} and \eqref{def.kappa} (we recall that we assumed $\beta \leq 1$): Indeed, we have
\begin{align}
	\limsup_{\eps \downarrow 0} \eps^{d} \#(\bigcup_{k=-2}^{\km} I_k) &= \limsup_{\eps \downarrow 0}  \eps^{d} \# \{ z_i \in \Phi^\eps(D) \colon \aeps \rho_i \geq \eps^{1+2\delta} \} \\
	& \leq \limsup_{\eps \downarrow 0}   \eps^{d -(d-2)(1+2\delta)} \eps^{d}  \sum_{z_i \in \Phi^\eps(D)} \rho_i^{d-2} \to 0 \\
	& \lesssim \limsup_{\eps \downarrow 0} \eps^{2(1-(d-2)\delta)} = 0.
\end{align}

We now turn to $I_{-3}^b$: Let $\{\delta_k \}_{k \in \N}$ be any sequence such that $\delta_k \downarrow 0^+$. Since $2\eps^{\delta/2} \to 0$, we estimate for any $\delta_k>0$
\begin{align}
	 \limsup_{\eps \downarrow 0^+} \eps^d \#(I_{-3}^b)\stackrel{\eqref{I_-3^g,b}}{\leq} 
	 \limsup_{\eps \downarrow 0^+} \eps^d \bigl(N^\eps(D) - N^\eps_{2\eps^{\delta/2}}(D)\bigr) 
	 \stackrel{\eqref{thinned.process}}{\leq} \lim_{\eps \downarrow 0^+} \eps^d \bigl(N^\eps(D) - N^\eps_{\delta_k}(D)\bigr).
\end{align}
 We now apply Lemma \ref{SLLN.general.pp} to $\Phi$ and each $\Phi_{\delta_k}$, $k \in \N$, to deduce that almost surely and for every $\delta_k >0$
\begin{align}
	\limsup_{\eps \downarrow 0^+} \eps^d \#(I_{-3}^b) \leq \lambda |D| - \langle N_{\delta_k}(D) \rangle.
\end{align}
By sending $\delta_k \downarrow 0^+$, we use once more Lemma \ref{SLLN.general.pp} on the last term on the right-hand side above and obtain 
\begin{align}
	\label{number.K_b}
	\lim_{\eps \downarrow 0^+} \eps^d \#(I_{-3}^b) = 0.
\end{align}

\smallskip

To conclude the proof of \eqref{I.small}, it thus remains to show that almost surely also
\begin{align}
	\label{number.tildeI_b}
 \eps^d \#(K^\eps) \rightarrow 0 \ \ \ \ \ \eps \downarrow 0^+.
\end{align}
We have for all $z_i \in  K^\eps \subset I^g_{-3}$
\begin{align}\label{minimal.dist}
 \min_{z_j \in \Phi^\eps(D) \backslash\{ z_i\}}\eps| z_j - z_i| \geq 2\eps^{1+\delta/2}, \qquad  \aeps\rho_i < \eps^{1+2\delta}.
\end{align}
In particular, by the first inequality above, the balls $\{ B_{\eps^{1+2\delta} }( \eps z_i ) \}_{z_i \in K^\eps}$ are all disjoint, and therefore
\begin{align}
	\label{K^eps.1}
\eps^d \#(K^\eps) &\lesssim \eps^d \sum_{z_i \in K^\eps} \eps^{-d(1+2\delta)} |B_{\eps^{1+2\delta}}(\eps z_i)| 
=  \eps^{-2d \delta} \sum_{z_i \in \tilde I^\eps_b} |B_{\eps^{1+2\delta} }(\eps z_i)| .
\end{align}
In addition, we observe that by definition of $K^\eps$, for any $z_i \in K^\eps$ there exists $z_j \in \cup_{k=-2}^{\km} J_k$ such that
\begin{align}\label{intersection.1}
	B_{2\eps^{1+\delta} }(\eps z_i) \cap B_{\theta \lambda_j \aeps \rho_j}(\eps z_j) \neq \emptyset.
\end{align}
Here we used $K_\eps \subset \tilde I_{-3}$ and \eqref{no.intersection.comparable} to rule out that $z_j \in J_{-3} \subset \tilde I_{-3}$.
In particular, \eqref{minimal.dist} and \eqref{intersection.1} imply
\begin{align}
	2\eps^{1+\delta/2} \leq \eps |z_i - z_j| \leq 2\eps^{1+\delta} + \theta \lambda_j \aeps \rho_j,
\end{align}
we obtain that $\theta \lambda_j \aeps \rho_j \geq 2 \eps^{1+\delta}$. We combine this inequality with condition \eqref{intersection.1} to infer that
\begin{align}
	B_{\eps^{1+2\delta} }(\eps z_i) \subset B_{2 \theta \lambda_j \aeps \rho_j}(\eps z_j)
\end{align}
and, by \eqref{K^eps.1}, to estimate
\begin{align}
	\eps^d \#(K^\eps) & \lesssim  \eps^{-2d\delta} \sum_{z_j \in \cup_{k=-2}^{\km} J_k} |B_{2 \theta \lambda_j \aeps \rho_j}(\eps z_j)| \\
	&\lesssim \eps^{-2d \delta} \biggl(\eps^{\frac{d}{d-2}} \max_{z_j \in \Phi^\eps(D)} \aeps \rho_j \biggr)^2 
	\sum_{z_j \in \cup_{k=-2}^{\km} J_k}  (\aeps \rho_j)^{d-2}\\
	&\stackrel{\eqref{delta.max}}{\lesssim}\eps^{2\delta d} \sum_{z_j \in \Phi^\eps(D)}  (\aeps \rho_j)^{d-2}.	
\end{align}
Thanks to Lemma \ref{SLLN.general.pp}, the right-hand side vanishes almost surely in the limit $\eps \downarrow 0^+$. This concludes the proof of \eqref{I.small}.

\smallskip

The limit in the first line of \eqref{good.set} is a direct consequence of \eqref{I.small}. Moreover, the second inequality in \eqref{safety.layer}
follows from \eqref{I.small} and Lemma \ref{small.index.set}.

\medskip

 To show \eqref{small.distance.bad}, we resort to the definition of $D^\eps_b$ to estimate
  \begin{align*}
 \bigl\{  z_i \in \Phi^\eps_{2\eta}(D)(\omega)  &\colon  \mbox{dist}( z_i , D^\eps_b ) \leq \eta \eps \bigr\}\\
 & \begin{aligned}
 	\subset \I & \cup \Bigl\{  z_i \in n^\eps(\omega)  \colon  \dist\Bigl( z_i , \bigcup_{z_j \in \cup_{k=-2}^{\km} J_k} B_{\Lambda \aeps \rho_j}(\eps z_j)  \Bigr) \leq \eta \eps \Bigr\} \\
  &\cup  \Bigl\{  z_i \in n^\eps(\omega) \cap \Phi^\eps_{2\eta}(D)(\omega) \colon  \dist\Bigl( z_i , \bigcup_{z_j \in J_{-3}} B_{\Lambda \aeps \rho_j}(\eps z_j)  \Bigr) \leq \eta \eps \Bigr\}
  \end{aligned} \\
 &:= I_b^\eps \cup F^\eps \cup C^\eps .
\end{align*}
We already know $ \eps^d \# (I_b^\eps) \to 0$.
Next, we argue that 
\begin{align}\label{estimate.set.1}
\eps^d \#( F^\eps ) \rightarrow 0.
\end{align}
This follows by an argument similar to the one for \eqref{number.tildeI_b}: 
We may choose $\eps_0=\eps_0(d)$ such that for all $\eps \leq \eps_0$,  $\eps^{\delta/2} \leq \eta$. By definition of $J_k$ and of $F^\eps$ above, we infer that for such $\eps \leq \eps_0$, for all $z_j \in F^\eps$ there exists $-2 \leq k \leq \km $ and $z_i \in J_k$ such that
\begin{align}\label{inclusion.E}
	B_{\eps^{1+\delta/2}}(\eps z_j) \subset B_{2 \eta \eps + \Lambda \aeps \rho_i}(\eps z_i) \subset B_{2 \Lambda \eta \eps^{-2\delta} \aeps \rho_i}(\eps z_i),
\end{align}
where in the second inequality we use that $\eps^{-2 \delta} \eta \geq  1$ and $\aeps \rho_i \geq \eps^{1+2\delta}$.  We note that by \eqref{minimal.dist} the balls $B_{\eps^{1+\delta/2}}(\eps z_j) $ with $z_j \in n^\eps$ are all disjoint. Hence,
\begin{align*}
\eps^d \#( F^\eps )& \stackrel{\eqref{inclusion.E}}{\lesssim} \eps^{-d\delta} 
\biggl | \bigcup_{z_i \in \cup_{k=-2}^{\km} J_k} B_{2 \Lambda \eta \eps^{-2\delta} \aeps \rho_i}(\eps z_i) \biggr| \\
&{ \lesssim} \eta^d \eps^{-d(\delta+2\delta)}  \biggl(\max_{z_j \in \Phi^\eps(D)} \aeps \rho_j \biggr)^2 
\sum_{z_j \in \Phi^\eps(D)}  (\aeps \rho_j)^{d-2}\\
&\stackrel{\eqref{delta.max}}{\lesssim} \eta^d \eps^{d\delta}\sum_{z_j \in \Phi^\eps(D)}  (\aeps \rho_j)^{d-2}.
\end{align*}
The right-hand side vanishes almost surely in the limit $\eps \downarrow 0^+$ thanks to \eqref{power.law} and Lemma \ref{SLLN.general.pp}.

\medskip 

We conclude the argument for  \eqref{small.distance.bad} by showing that the set $C^\eps$ is empty when $\eps$ is small: In fact, by construction, if $z_i \in n_\eps$ satisfies
\begin{align}
	\dist\biggl( \eps z_i , \bigcup_{z_j \in J_{-3}} B_{\Lambda \aeps \rho_j}(\eps z_j)  \biggr) \leq \eta\eps,
\end{align}
then there exists a $z_j \in J_{-3} \subset I_{-3}$ such that for $\eps \leq \eps_0$ with $\Lambda \eps^{2 \delta} \leq \eta$
\begin{align*}
\eps |z_i - z_j| \leq \mbox{dist}\Bigl( \eps z_i, B_{\Lambda \aeps \rho_j}(\eps z_j) \Bigr) + \Lambda \eps^{1+2\delta} \leq 2\eta\eps.
\end{align*}
 This yields $C^\eps \subset \Phi^\eps(D) \backslash \Phi^\eps_{2 \eta}(D)$ and thus that it is empty since by definition we also have $C^\eps \subset \Phi^\eps_{2\eta}(D)$. 
This finishes the proof of \eqref{small.distance.bad}.

\smallskip

We hence have shown that $H^\eps_b, \bar H^\eps_b$ and $D^\eps_b$ in Lemma \ref{l.geometry} may be chosen as in Step 2 (see \eqref{H^b}, \eqref{bar.H^b}, and \eqref{D_b}).
We also remark that it immediately follows by  \eqref{delta.max} and the bounds on $\lambda^\eps_i \leq \Lambda$ obtained at the beginning of Step 2, that the radii $\lambda^\eps_i \aeps \rho_i$ 
generating the balls of $\bar H^\eps_b$ satisfy the second inequality in \eqref{bar.H^b}.

\medskip

It remains to argue \eqref{similar.size.apart} and \eqref{small.dont.intersect.big}. The first property follows directly from \eqref{no.intersection.comparable} for
$J_k \subset \tilde \I_k$ and the choice of the parameters $\lambda_i = \theta \tilde\lambda_i$ and $\theta^4= \alpha$. 

\smallskip

We now turn to \eqref{small.dont.intersect.big} and begin by showing that it suffices to prove the following:\\

\textbf{Claim:} For all $-3 \leq k < l \leq k_{max}$ and every $z_k \in J_k$, $z_l \in \tilde I_l$ we have
\begin{align}
	\label{small.dont.intersect.big.3}
	 B_{\tilde \lambda_l \aeps \rho_l}(\eps z_l) \cap B_{\theta \lambda_k \aeps \rho_k}(\eps z_k) = \emptyset.
\end{align}

We first prove \eqref{small.dont.intersect.big.2} provided this claim holds. To do so, for any $k < l$ and $z_j \in J_l$ we begin by denoting by $E^{z_j}_k$  the
set
\begin{equation}\label{char.E_k^z_j}
E^{z_j}_{k} := B_{ \aeps \lambda_j \rho_j}(\eps z_j) \setminus \bigcup_{m = k}^{l-1} \bigcup_{z_i \in J_{m}}  B_{\theta \lambda_i \aeps \rho_i}(\eps z_i)
\end{equation}
and arguing that
\begin{align}\label{char.E_k^z_j.1}
&B_{\aeps \tilde\lambda_j \rho_j}(\eps z_j) \subset E^{z_j}_k \subset E_k,\\
	\label{E_k.disjoint.union}
&E_k = \dot\bigcup_{l \geq k} \dot \bigcup_{z_j \in J_l}  E^{z_j}_k,
\end{align}
where each union above is between disjoint sets. 

\smallskip

By \eqref{def.E_l} for $E_{l-1}$ and \eqref{def.J_l} for $J_l$, we clearly have that 
$$
B_{\aeps \lambda_j \rho_j}(\eps z_j ) \subset E_{l-1}.
$$
Note that, by construction, this ball is a connected component of the set $E_{l-1}$. From the previous inclusion, the second inclusion in \eqref{char.E_k^z_j.1} is an easy
application of the recursive definition \eqref{def.E_l} of $E_k$. Similarly, \eqref{E_k.disjoint.union} 
is an easy consequence of the definition \eqref{def.E_l} of the sets $E_k$. Furthermore, since each 
$J_m \subset \tilde I_m$, we apply claim \eqref{small.dont.intersect.big.3} to $z_j$ and all $z_k \in J_m$ with $m \leq l-1$, and conclude also the first inclusion in 
\eqref{char.E_k^z_j.1}. We conclude that definition \eqref{char.E_k^z_j} immediately yields the monotonicity property $E^{z_j}_{k-1} \subset E^{z_j}_k$ for all $z_j\in J_l$
 and $-3 \leq k \leq l$.

\smallskip

Equipped with \eqref{char.E_k^z_j.1}-\eqref{E_k.disjoint.union}, we now turn to \eqref{small.dont.intersect.big}: Let $z_0 \in \mathcal I_{k_0}$ for some $-2 \leq k_0 \leq k_{max}$. By definition 
\eqref{def.bad.set}, there exists $z_1 \in J_{k_0}$ such that 
\begin{align}
	\label{ball.z_1}
	B_{\aeps \rho_0}(\eps z_0) \subset B_{\lambda_{1} \aeps \rho_1} (\eps z_{1}).
\end{align}
By this, property \eqref{small.dont.intersect.big} follows immediately if we prove that for any $l < k_0$ and all $z_3 \in J_l$ we have
\begin{align}
	\label{small.dont.intersect.big.2}
		B_{\aeps \rho_0}(\eps z_0) \cap B_{\theta \lambda_3 \aeps \rho_3}(\eps z_3) = \emptyset.
\end{align}

\smallskip

Let $-3 \leq k_2 \leq \km$ be minimal such that there exists $z_{2} \in \tilde I^\eps_{k_{2}}$ with the property that
\begin{align}
	\label{ball.z_2}
	B_{\aeps \rho_0}(\eps z_0) \subset B_{\tilde \lambda_{2} \aeps \rho_{2}}(\eps z_{2}).
\end{align}
Note that, by \eqref{covering.comparable}, we may always find such $k_2$. If $k_0 \leq k_2$, we use the above claim \eqref{small.dont.intersect.big.3} on 
$z_2 \in \tilde I_{k_2}$ and $z_3 \in J_l$ with $l < k_2$ and conclude \eqref{small.dont.intersect.big.2}. Let us now assume that $k_0 > k_2$: Since
 $z_0 \in \I_{k_0}$, by definition \eqref{def.bad.set} we have that $z_2 \not \in J_{k_2}$. This implies by \eqref{def.J_l} that 
$$
B_{\theta \tilde \lambda_{2} \aeps \rho_{2}}(\eps z_{2}) \subset E_{k_2 +1}.
$$
In particular, by \eqref{ball.z_2} and \eqref{char.E_k^z_j} there exists a $\tilde k_0 > k_2$ and $\tilde z_1 \in J_{\tilde k_0}$ such that
\begin{align}
	\label{small.dont.intersect.big.4}
	  B_{\aeps \rho_0}(\eps z_0) \subset B_{\theta \tilde \lambda_{2} \aeps \rho_{2}}(\eps z_{2}) \subset E^{\tilde z_1}_{k_2 +1}.
\end{align}
Moreover, by \eqref{char.E_k^z_j} and the assumption $k_2 < k_0$, we also have 
\begin{align}
	 B_{\aeps \rho_0}(\eps z_0) \subset	E^{\tilde z_1}_{k_2 +1}   \subset E^{\tilde z_1}_{k_0}.
\end{align}
On the other hand, by \eqref{ball.z_1} also 
\begin{align}
B_{\aeps \rho_0}(\eps z_0) \subset  B_{\lambda_{1} \aeps \rho_1} (\eps z_{1}) = E^{z_1}_{k_0}.
\end{align}
By combining the previous two inequalities and using that the sets $E^{z_i}_k, E^{z_j}_k$ are whenever $z_i \neq z_j \in J$, we conclude that $\tilde z_1 = z_1$. 
Thus, since $z_1 \in J_{k_0}$, definition \eqref{char.E_k^z_j} applied to $E^{z_1}_{k_2+1}$ yields that for all $k_2 < l < k_0$ we have for all $z_i \in J_l$
\begin{align}
 E^{z_1}_{k_2+1} \cap B_{\theta \aeps \lambda_i \rho_i}(\eps z_i) = \emptyset.
\end{align}
By using \eqref{small.dont.intersect.big.4}, the above inequality implies \eqref{small.dont.intersect.big.2} with $z_i=z_3$ and for all $k_2 < l < k_0$. To extend 
\eqref{small.dont.intersect.big.2} also to the indices $l \leq k_2$ it suffices to observe that for $l < k_2$ we may argue as above in the case $k_0 \leq k_2$. Finally,
 if $l= k_2$, we obtain \eqref{small.dont.intersect.big.2} by applying \eqref{ball.z_2} and \eqref{no.intersection.comparable} to $z_2 \in \tilde I_{k_2}$ and 
 $z_3 \in J_{k_2} \subset \tilde I_{k_2}$.

\bigskip

It remains to prove claim \eqref{small.dont.intersect.big.3}. Let $z_l \in \tilde I^\eps_l$, $-2 \leq l \leq \km$. We begin by arguing that
\begin{align}
	\label{ball.j.subset.E_k-1}
	B_{\theta \tilde \lambda_{l} \aeps \rho_{l}} (\eps z_{l}) \subset E_{l}.
\end{align}
Indeed, if $z_l \in J_{l}$, this follows immediately from the definition of $E_{l}$. If $z_l \not \in J_{l}$, then by \eqref{def.J_l} we have
$B_{\lambda_{l} \aeps \rho_{l}} (\eps z_{l}) \subset E_{l+1}$. We now use \eqref{no.intersection.comparable} on the family $J_l$ and definition \eqref{def.E_l} of $E_l$ 
 to conclude \eqref{ball.j.subset.E_k-1}. 
From \eqref{ball.j.subset.E_k-1} we may use again \eqref{no.intersection.comparable} to the families $J_{l}, J_{l-1}$ and also obtain that
\begin{align}
	\label{ball.j.subset.E_k-2}
	B_{\theta \tilde \lambda_{l} \aeps \rho_{l}} (\eps z_{l}) \subset E_{l-1}.
\end{align}

We are now ready to argue \eqref{small.dont.intersect.big.3} by contradiction: Let us assume that there exists a $k< l$ and $z_k \in J_k$ such that 
\eqref{small.dont.intersect.big.3} fails, i.e.
\begin{align}\label{small.dont.intersect.big.5}
  B_{\tilde \lambda_l \aeps \rho_l}(\eps z_l) \cap B_{\theta \lambda_k \aeps \rho_k}(\eps z_k) \neq \emptyset.
\end{align}
Then, again by \eqref{no.intersection.comparable} applied to $J_{l}$ and $J_{l-1}$, we necessarily have $k \leq l-2$.
Let us now assume that $z_k \in J_{l-2}$: Then by \eqref{def.J_l} we have 
\begin{align}\label{iteration.boundary.1}
B_{\aeps \theta \tilde\lambda_k \rho_k }(\eps z_k) \subsetneq E_{l-1}.
\end{align}
This, together with \eqref{ball.j.subset.E_k-2} for $z_l$ and \eqref{small.dont.intersect.big.5} yields
\begin{align}\label{intersection.boundary}
	B_{\theta \tilde \lambda_k \aeps \rho_k}(\eps z_k) \cap \partial B_{\theta \tilde \lambda_l \aeps \rho_l}(\eps z_l) \neq \emptyset.
\end{align}

{
For a general $k < l-2$, we claim that we may iterate the previous argument and obtain that
\eqref{small.dont.intersect.big.5} implies the existence of an integer $m \leq 1 + \lceil \frac {k_{max}} 2 \rceil$ and a collection $k_0, \cdots, k_m \leq l-2$, 
such that $k=k_0$ and for all $0 \leq n \leq m -1$ we have $k_n \leq k_{n+1}-2$ and there exist $z_{k_n} \in J_{k_n}$ and $z_m \in J_{k_m}$ satisfying
(see Figure \ref{fig.Step3.induction})
\begin{equation}\label{chain.to.boundary}
 \begin{aligned}
		B_{\theta \tilde \lambda_{k_m} \aeps \rho_{k_m}}(\eps z_{k_m}) \cap \partial B_{\theta \tilde \lambda_l \aeps \rho_l}(\eps z_l) &\neq \emptyset, \\
		B_{\theta \tilde \lambda_{k_{n}} \aeps \rho_{k_{n}}}(\eps z_{k_{n}}) \cap B_{\theta \lambda_{k_{n+1}} \aeps \rho_{k_{n+1}}}(\eps z_{k_{n+1}})  &\neq \emptyset.
\end{aligned}
\end{equation}

\begin{figure}
\centering
\begin{tikzpicture}[scale=0.6]
\draw[red, dashed, line width=0.25mm](2,2) circle (4.5);
\draw[red,  line width=0.5mm](2,2) circle (2);
\draw[black,  line width=0.25mm](5.2,4.9) circle (0.8);
\draw[black, dashed, line width=0.25mm](5.2,4.9) circle (1.2);
\draw[black, dashed,  line width=0.25mm](4.3,4) circle (0.4);
\draw[black,  line width=0.25mm](4.3,4) circle (0.3);
\draw[black,  line width=0.25mm](3.8,3.8) circle (0.2);
\node[text width=2.5cm] at (2.5,2.3) {$B_l$};
\node[text width=2.5cm] at (6.4,0.7) {$B_{k_0}$};
\node[text width=2.5cm] at (7.45,3.2) {$B_{k_2}$};
\node[text width=2.5cm] at (6.4,3) {$B_{k_1}$};
\node[text width=2.5cm] at (-0.1,2.3) {$B_{\theta l}$};
\draw[->](4.5,1.1)--(3.9, 3.5);
\draw (-3,7) rectangle (7,-3);
;\end{tikzpicture}
\caption{ The thick ball $B_{l}$ in the centre represents $B_{\theta \tilde \lambda_l \aeps \rho_l}(\eps z_l)$, while the nested dashed ball $B_{\theta l}$ is its 
dilation by $\theta> 1$. The balls $B_{k_0}$, $B_{k_1}$ and $B_{k_2}$ correspond to $B_{\theta \tilde \lambda_{k_{0}} \aeps \rho_{k_{0}}}(\eps z_{k_{0}})$,
$B_{\theta \tilde \lambda_{k_{1}} \aeps \rho_{k_{1}}}(\eps z_{k_{1}})$ and $B_{\theta \tilde \lambda_{k_{2}} \aeps \rho_{k_{2}}}(\eps z_{k_{2}})$, respectively. 
The nested, dashed balls around $B_{k_0}$, $B_{k_1}$ and $B_{k_2}$ are the dilations by the factor $\theta^2$.} \label{fig.Step3.induction}
\end{figure}

Indeed, for $z_k \in  J_k$ with $k < l-2$, we know that by \eqref{def.J_l} 
\begin{align}\label{iteration.boundary.2}
B_{\aeps \theta\tilde \lambda_k \rho_k}(\eps z_k) \not\subset E_{k+1}.
\end{align}
If also  \eqref{intersection.boundary} is true, then we obtain \eqref{chain.to.boundary} with 
$k_0= k_m= k$. Let us assume, instead, that \eqref{intersection.boundary} does not hold and thus, by \eqref{small.dont.intersect.big.5} that  
\begin{align}\label{iteration.boundary.4}
B_{\aeps\theta\tilde \lambda_k \rho_k}(\eps z_k) \subset B_{\aeps\theta \tilde \lambda_l  \rho_l}(\eps z_l) \stackrel{\eqref{ball.j.subset.E_k-2}}{\subset} E_{l-1}.
\end{align}
Then, by \eqref{iteration.boundary.2} and \eqref{def.E_l} there exists an index $k_1 \leq l-2$ and $z_{k_1} \in J_{k_1}$ such that
\begin{align}\label{iteration.boundary.3}
B_{\aeps\theta\tilde \lambda_k \rho_k}(\eps z_k) \cap B_{\aeps\theta \lambda_{k_1} \rho_{k_1}}(\eps z_{k_1}) \neq \emptyset.
\end{align}
Moreover, by \eqref{no.intersection.comparable}, we necessarily have $k_1 \geq k + 2$. We thus recovered the second line in \eqref{chain.to.boundary}. 
Since $z_{k_1} \in J_{k_1}$, we use again \eqref{def.J_l} to infer that 
$$
B_{\aeps\theta\tilde \lambda_{k_1} \rho_{k_1}}(\eps z_{k_1}) \not\subset E_{k_1+1}.
$$
Therefore, if $k_1= l-2$, we argue as in \eqref{iteration.boundary.1} and conclude that  \eqref{intersection.boundary} is satisfied with $z_k$ substituted by $z_{k_1}$. 
This and \eqref{iteration.boundary.3} yield  \eqref{chain.to.boundary} with $m=1$. Clearly, the same holds if $k_1 < l-2$ but \eqref{intersection.boundary} nonetheless satisfied
by $z_{k_1}$. Let us now assume, instead, that $z_{k_1}$ does not satisfy the first line in \eqref{chain.to.boundary}: By \eqref{iteration.boundary.3} and 
\eqref{iteration.boundary.4} this implies that
\begin{align}
B_{\ \aeps \theta\tilde \lambda_{k_1}\rho_{k_1}}(\eps z_{k_1}) \subset B_{\aeps\theta \tilde \lambda_l  \rho_l}(\eps z_l)  \subset  E_{l-1}.
\end{align}

We may now argue as for \eqref{iteration.boundary.2} above and obtain the existence of a new index $k_2 \geq k_1 +2$ satisfying \eqref{iteration.boundary.3} with $k$
and $k_1$ substituted by $k_1$ and $k_2$ respectively. By repeating the same argument above we iterate and conclude \eqref{chain.to.boundary} for a general $m$.
We remark that, since at each step $n$ the index $k_n$ increases of at least 2 this procedure necessarily stops whenever $k_n= l-2$. 
In other words, we obtain \eqref{chain.to.boundary} after at most  $1 + \lceil \frac {k_{max}} 2 \rceil$ iterations. We thus established \eqref{chain.to.boundary}.

\smallskip

Equipped with \eqref{chain.to.boundary} we finally argue \eqref{small.dont.intersect.big.3}: Since for all $0 \leq n \leq m \leq 1 + \lceil \frac {k_{max}} 2 \rceil$,
$1 \leq \lambda_{k_n} \leq \Lambda$ and
$k_0 \leq \cdots \leq k_m \leq l-2$, we estimate
\begin{align}
	\eps |z_l - z_{k}| &\geq \eps |z_l - z_{k_m}| - \sum_{n=1}^{m} \eps |z_{k_n} - z_{k_{n-1}}| \\
	& \stackrel{\eqref{chain.to.boundary}}{\geq} \theta \tilde \lambda_l \aeps \rho_l - (1 +2 m)\Lambda \aeps \rho_{k_m}\\
	& \stackrel{\theta >1}{\geq} \tilde \lambda_l \aeps \rho_l + (\theta - 1)\aeps \rho_l -  (\km + 4)  \Lambda \aeps \rho_{k_m}.
\end{align}
We now use the fact that since $z_l \in \tilde I_l$ and $z_{k_m} \in J_{k_m} \subset \tilde I_{k_m}$, we have by \eqref{def.I_k} and the assumptions on the indices $k_n$
that $\frac {\rho_{l}}{\rho_{k_m}} \geq \eps^{-\delta}$. 
From this inequality it follows that 
\begin{align}
 \eps |z_l - z_{k}| &\geq \tilde \lambda_l \aeps \rho_l + \bigl( (\theta -1)\eps^{-\delta} -  (\km + 4) \Lambda \bigr)\aeps \rho_{k_m}
\end{align}
and for $\eps$ small enough we bound
\begin{align}
 \eps |z_l - z_{k}| &\geq \tilde \lambda_l \aeps \rho_l + 2\lambda_k \aeps \rho_{k_m},
\end{align}
where $\lambda_k$ is the factor associated to $z_k$. We now observe that if $k_m=k_0=k$, then the above inequality contradicts \eqref{small.dont.intersect.big.5}. 
If, otherwise $k=k_0 \neq k_m$, then by construction we have $k_0 \leq k_m -2$ and thus by \eqref{def.I_k} that $\rho_k \leq \rho_{k_m}$. This and the above inequality 
contradict \eqref{small.dont.intersect.big.5} also in this case. This proves claim \eqref{small.dont.intersect.big.3} and establishes \eqref{small.dont.intersect.big}. 
The proof of  Lemma \ref{l.geometric.v2} and Lemma \ref{l.geometry} are complete.}
\end{proof}
\section{Proof of Lemma \ref{reduction.operator}} \label{sec:testfunction}

\begin{proof}[Proof of Lemma \ref{reduction.operator}] For a $\theta > 1$ fixed, let  $H^\eps = H^\eps_b \cup H^\eps_g$ and the sets $\bar H^\eps_b$, $D^\eps_b$ be as introduced in Lemma \ref{l.geometry} and  Lemma \ref{l.geometric.v2}. Throughout this proof, we use the notation $\lesssim$ for $\leq C$ with the constant 
depending on $d$, $\beta$, $\theta$.

\smallskip

{\bf Step 1. } We recall that the set $D^\eps_b$ is related to the partitioning of
$H^\eps = H^\eps_b \cup H^\eps_g$ and is such that $H^\eps_b \subset \bar H^\eps_b \subset D^\eps_b$.  We construct $R_\eps v$ by distinguishing between the parts of
domain $D$ containing ``small'' holes (i.e. $H^\eps_g$) and the ones containing the clusters of holes (i.e. $H^\eps_b$). We set
\begin{align}\label{definition.Rv}
R_\eps v := \begin{cases}
v^\eps_b \ \ \ \text{ \ in $D^\eps_b$}\\
v^\eps_g \ \ \ \ \text{ in $D \backslash D^\eps_b$,}
\end{cases}
\end{align}
where the functions $v^\eps_b$ and $v^\eps_g$ satisfy
\begin{align}\label{Rv.bad}
\begin{cases}
v^\eps_{b} = 0 \ \text{ in $H^\eps_b$}, \ \ \ \ v^\eps_{b}= v \ \text{ in $D \backslash D^\eps_b$,}\\
\nabla \cdot v^\eps_b = 0 \ \text{ in $D$},\\
 v^\eps_b \in H^1_0(D) \ \ \text{for $\eps$ small enough and } v^\eps_{b} \to v \ \ \text{ in $H^1_0(D)$,}\\
\| v^\eps_{b} \|_{C^0} \lesssim \| v \|_{C^0(\bar D)}.
\end{cases}
\end{align}
and
\begin{align}\label{Rv.good}
\begin{cases}
  v^\eps_g= v \ \text{ in $D^\eps_b$}, \ \ \ \ v^\eps_g = 0 \ \text{in $H^\eps_g$,}\\
\text{$v^\eps_g$ satisfies properties \ref{pro.0.inside.holes} - \ref{pro.capacity} with $H^\eps$ substituted by $H^\eps_g$.}
\end{cases}
\end{align}

In particular, this means
\begin{align}
	R_\eps v = v_b^\eps + v_g^\eps - v. \label{R_v.sum}
\end{align}

\smallskip

Before constructing the functions $v^\eps_g$ and $v^\eps_b$, we argue that  $R_\eps v$ defined in \eqref{definition.Rv} satisfies all the properties \ref{pro.0.inside.holes} - \ref{pro.capacity}  enumerated in the lemma. Properties \ref{pro.0.inside.holes} and \ref{pro.divergence.free} are immediately satisfied. We turn to properties \ref{pro.convergence.H^1} and \ref{pro.convergence.L^p}. By \eqref{R_v.sum}, we rewrite
\begin{align}
 \| R^\eps v - v \|_{L^p(\Rd)} = \| v^\eps_g -v\|_{L^p(\Rd)} + \| v^\eps_b -v\|_{L^p(D^\eps_b)}.
 \end{align}
The first term on the right-hand side vanishes almost surely in the limit thanks to the second line of \eqref{Rv.good} (property \ref{pro.convergence.L^p} for $v^\eps_g$). We bound the second term by using H\"older's inequality and the last estimate in \eqref{Rv.bad}:
\begin{align}
 \| v^\eps_b -v\|_{L^p(D^\eps_b)}^p \leq \|v - v^\eps_b \|_{C^0(D)}|D^\eps_b| \lesssim \| v \|_{C^0(D)}|D^\eps_b|.
\end{align}
Thanks to \eqref{D_b}, also this last line almost surely vanishes in the limit $\eps \downarrow 0^+$. Thus, almost surely the whole norm $\| R^\eps v - v \|_{L^p(\Rd)} \to 0$ when $\eps \downarrow 0^+$. This yields property \ref{pro.convergence.L^p} for $R_\eps v$. 
To establish Property \ref{pro.convergence.H^1} we use a similar argument to bound the $L^2$-norm of $\nabla (R^\eps v - v)$, this time using that by \eqref{Rv.bad} the gradient $\nabla (v^\eps_b -v)$ converges strongly to zero in $L^2(\Rd)$. Properties \ref{pro.0.inside.holes} - \ref{pro.convergence.L^p} for $R^\eps v$ are hence established.

\smallskip

It remains to argue property \ref{pro.capacity}:
Let $u_\eps\in H^1_0(D_\eps)$ be such that $u_\eps \rightharpoonup u$ in $H^1(D)$ and $\nabla \cdot u_\eps =0$ in $D$. By \eqref{R_v.sum}, we have
\begin{align}
 \int \nabla R^\eps v \cdot \nabla u_\eps = \int \nabla v^\eps_g \cdot \nabla u_\eps +  \int \nabla (v^\eps_b -v) \cdot \nabla u_\eps.
\end{align}
By \eqref{Rv.bad} and the assumptions on $u_\eps$, the second integral on the right-hand side almost surely converges to zero in the limit $\eps \downarrow 0^+$. We treat the first integral term by observing that $H^1_0(D^\eps) \subset H^1_0(D\backslash H^\eps_g)$ and applying \eqref{Rv.good} (i.e. property \ref{pro.capacity} for $v^\eps_g$). This implies property \ref{pro.capacity} for $R_\eps v$ and concludes the proof of the lemma provided we construct $v^\eps_g$ and $v^\eps_b$ as above.

\medskip

{\bf Step 2. Construction of $v^\eps_b$ satisfying \eqref{Rv.bad}. \,}

\smallskip

To construct $v^\eps_b$ on $D^\eps_b$, we exploit the construction of the covering $\bar H^\eps_b$ of Lemma \ref{l.geometric.v2}, as sketched in Section \ref{sec:ideas}. The main advantage in working with $\bar H^\eps_b$ instead of $H^\eps_b$ is related to the geometric properties satisfied by  $\bar H^\eps_b$ which allow to define $v^\eps_b$ via a finite number of iterated Stokes problems on rescaled annuli. 

\smallskip

Throughout this step, we skip the upper index $\eps$ and write $v_b$ instead of $v^\eps_{b}$.  Let $J= \bigcup_{i=-3}^{\km} J_i$ be the sub-collection of the centres of the
balls generating $\bar H^\eps_b$ in the proof of Lemma \ref{l.geometric.v2}. For each $z_j \in J$, we write 
\begin{align}\label{notation.simpler}
R_j^\eps:= \lambda_j^\eps \rho_j, \ \ B_j := B_{\aeps R_j}(\eps z_j),\\
B_{\theta,j} := B_{\aeps \theta R_j}(\eps z_j), \ \  A_j:= B_{\theta,j} \backslash B_j,
\end{align}
with $\lambda_j^\eps \in [1, \Lambda]$ the factors defined in Lemma \ref{l.geometric.v2}.

\smallskip

As a first step, we consider the set $J_{\km}$ and define the function $v^{0}$ on $D$ as 
\begin{align}\label{first.extension}
\begin{cases}
v^{0} =v \ \ \ \text{in $D \backslash \bigcup_{z_j \in J_{\km}}B_{\theta, j}$}\\
v^{0} = 0 \ \ \ \text{ in $B_j$ for all $z_j \in J_{\km}$}\\
v^{0} = v^{0}_j \ \ \ \text{ in $A_j$ for all $z_j \in J_{\km}$,}
\end{cases}
\end{align}
where each $v^{0}_j$ solves
\begin{align}\label{reduction.1}
\begin{cases}
-\Delta v^{0}_j+ \nabla  p_j^{0} = -\Delta v \ \ \ &\text{in $A_j$}\\
\nabla \cdot v^{0}_j =0 \ \ \ &\text{in $A_j$}\\
 v^{0}_j  = 0 \ \ \ &\text{on $\partial B_j$}\\
  v^{0}_j= v \ \ \ &\text{on $\partial B_{\theta,j}$.}
\end{cases}
\end{align}
This is well-defined since $\dv v = 0$. In particular, each function $v^{0}_j -v$ solves the first problem in \eqref{P.annulus} in $A_i$, 
and we apply to it the estimates \eqref{estimate.stokes.annulus} with the choice $R=\theta$ and after a rescaling by $\aeps R_j$ and a translation of
$\eps z_j$. This yields
\begin{align}
&\| \nabla v^{0}_j \|_{L^2(A_j)}^2 \lesssim \biggl( \| \nabla v \|_{L^2(B_{\theta,j})}^2 + \frac{1}{\bigl(\aeps R_j\bigr)^2} \| v \|_{L^2(B_{\theta,j})}^2\biggr),\\
&\| v^{0}_j \|_{C^0(\overline{B_{\theta,j}})} \lesssim \| v \|_{C^0(\overline{B_{\theta,j}})}.
\end{align}
We now use the definition \eqref{notation.simpler} of $R_j$ to obtain
\begin{equation} \label{first.estimate}
\begin{aligned}
&\| \nabla v^{0}_j\|_{L^2(A_j)}^2 \lesssim \bigl( \| \nabla v \|_{L^2(B_{\theta,j} )}^2 +\eps^d \lambda_j \rho_j^{d-2} \| v \|_{L^\infty}^2 \bigr),\\
&\| v^{0}_j \|_{C^0(\overline{B_{\theta,j}})} \lesssim\| v \|_{C^0(\overline{B_{\theta,j}})}.
\end{aligned}
\end{equation}
Note that thanks to \eqref{similar.size.apart} of Lemma \ref{l.geometric.v2}, we have that $B_{\theta,i} \cap B_{\theta,j} = \emptyset$ for all $z_i\neq z_j \in J_{\km}$ and $\lambda_i \leq \Lambda$ for all $z_i \in J$. Thus,  this also implies by \eqref{first.extension} that
\begin{equation} \label{iteration.estimate.0}
\begin{aligned}
&\| \nabla v^{0} \|_{L^2(D)}^2 \lesssim \| \nabla v \|_{L^2(D)}^2 + \eps^d \sum_{z_j \in J_{\km}} \rho_j^{d-2}  \| v \|_{L^\infty(D)}^2,\\
&\| v^{0} \|_{C^0(D)} \lesssim \| v \|_{C^0({D})}.
\end{aligned}
\end{equation}
Furthermore, since $v^0 - v$ is supported only in the balls $B_{\theta,j}$,  the triangle inequality and \eqref{first.estimate} imply also that
\begin{align}\label{strong.convergence.0}
\| \nabla (v^{0} -v) \|_{L^2(D)}^2 \lesssim \sum_{z_j \in J_{\km}}\| \nabla v \|_{L^2(B_{\theta,j})}^2 +
\eps^d \sum_{z_j \in J_{\km}} \rho_j^{d-2}  \| v \|_{L^\infty(D)}^2.
\end{align}

We observe also that, by using again the fact that by Lemma \ref{l.geometric.v2} all the balls $B_j$ are disjoint, the function $v^{0}$ vanishes on
 \begin{align}\label{step.1}
\bigcup_{z_j \in J_{\km}} B_j \stackrel{\eqref{inclusion.step.by.step}}{\supseteq} \bigcup_{z_j \in \I_{\km}} B_{\aeps \rho_j}(\eps z_j).
\end{align}

\medskip

We now proceed iteratively and for $1 \leq i \leq \km + 3$ we consider the subsets $J_{\km- i} \subset J$.
For each $i$ in the range above, let $v^{i}$ be defined as in \eqref{first.extension} and \eqref{reduction.1}, with $v^{i-1}$ instead of $v$ and the domains $B_j$ and $A_j$ generated by the
elements $z_j \in J_{\km-i}$.  We now argue that at each step $i$ we have 
 \begin{equation}
 \label{iteration.estimate}
\begin{aligned}
&\| \nabla v^{i} \|_{L^2(D)}^2 \lesssim \| \nabla v \|_{L^2(D)}^2 + \eps^d \sum_{z_j \in \cup_{k=0}^{i}J_{\km-k} } \rho_j^{d-2}  \| v \|_{L^\infty(D)}^2,\\
&\| v^{i} \|_{C^0(D)} \lesssim \| v \|_{C^0({D})},
\end{aligned}
\end{equation}
and 
\begin{align}\label{vanishing.set}
v^{i} = 0 \ \ \ \ \text{ in }  \bigcup_{z_j \in \bigcup_{k=0}^{i} \I_{\km-k}} B_{\aeps \rho_j}(\eps z_j).
\end{align}
Moreover,
\begin{equation}
\label{strong.convergence}
\begin{aligned}
& v^{i} - v = 0  \ \ \ \ \text{ in } D \backslash \left(   \bigcup_{z_j \in \cup_{k=0}^i J_{\km-k}} B_{\theta,j} \right),\\
&\| \nabla (v^{i} -v) \|_{L^2(D)}^2 \lesssim  \sum_{z_j \in \cup_{k=0}^{i}J_{\km-k} } \!\!\!\!\! \!\!\!\Bigl( \| \nabla v \|_{L^2(B_{\theta,j})}^2 + \eps^d \rho_j^{d-2}  \| v \|_{L^\infty(D)}^2\Bigr).
\end{aligned}
\end{equation}
We prove the previous estimates by induction over $0 \leq i \leq \km + 3$.

\smallskip

It is easy to prove the estimates in \eqref{iteration.estimate} by induction:  For $i=0$, \eqref{iteration.estimate.0} is exactly \eqref{iteration.estimate}. 
We now observe that at each step $i$ we may argue as for $v^0$ and obtain \eqref{iteration.estimate.0} 
with $v^0$, $v$ and $J_{\km}$ substituted by $v^{i}$, $v^{i-1}$ and $J_{\km-i} $, respectively. 
Therefore, if we now assume \eqref{iteration.estimate} holds at step $i-1$, we only need to
combine the analogue of \eqref{iteration.estimate.0} for $v^{i}$ with \eqref{iteration.estimate} for $v^{i-1}$. 

\smallskip

We now consider \eqref{vanishing.set}: For $i=0$, this is implied immediately by \eqref{step.1}. Let us now assume that \eqref{vanishing.set} holds for $i-1$. By definition 
of $v^i$ (cf. \eqref{reduction.1}),  the function vanishes on 
$$
\bigcup_{z_j \in J_{\km-i}} B_j \stackrel{\eqref{inclusion.step.by.step}}{\supseteq} \bigcup_{z_j \in \I_{\km-i}} B_{\aeps \rho_j}(\eps z_j)
$$
and equals $v^{i-1}$ on $D \backslash \bigcup_{z_j \in J_{\km -i}} B_{\theta,j}$. By the induction hypothesis \eqref{vanishing.set} for  ${i-1}$, \eqref{vanishing.set} for $i$ follows provided 
\begin{align}
\left( \bigcup_{z_j \in J_{\km-i} }B_{\theta,j} \right) \cap \left( \bigcup_{z_j \in \cup_{k=0}^{i-1} \I_{\km-k}} B_{\aeps \rho_j}(\eps z_j) \right)= \emptyset.
\end{align}
By recalling the definitions of the balls $B_{\theta,j}$, this identity is a consequence of property \eqref{small.dont.intersect.big} of Lemma \ref{l.geometric.v2}. We established \eqref{vanishing.set} and 
\eqref{iteration.estimate} for each $0 \leq i \leq  \km +3$.

\smallskip

Finally, we turn to the claims in \eqref{strong.convergence}: For $i=0$, both lines of \eqref{strong.convergence} hold by construction and \eqref{strong.convergence.0}, respectively. If we now assume that \eqref{strong.convergence} is true for $i-1$, 
then $v^{i}$ is by construction equal to $v^{i-1}$ outside the set 
$$
\bigcup_{z_j \in J_{\km-i}} B_{\theta,j}.
$$
It now suffices to apply the induction hypothesis for $v^{i-1}$ to conclude the first statement in \eqref{strong.convergence}. In addition, by the triangle inequality
we estimate
\begin{align}
\|\nabla( v^{i} - v )\|_{L^2(D)}^2 &\leq \|\nabla (v^{i} - v^{i-1}) \|_{L^2(D)}^2 + \|\nabla( v^{i-i} - v )\|_{L^2(D)}^2.
\end{align}
We apply the induction hypothesis to the second term on the right-hand side above and get
\begin{align}\label{estimate.induction.1}
\|\nabla( v^{i} - v )\|_{L^2(D)}^2 
\leq \|\nabla (v^{i} - v^{i-1}) \|_{L^2(D)}^2 + \!\!\!\!\!\!\!\! \sum_{z_j \in \cup_{k=0}^{i-1}J_{\km-k} } \!\!\!\!\!\!\!\!\Bigr( \| \nabla v \|_{L^2(B_{\theta,j})}^2 + \eps^d \rho_j^{d-2}  \| v \|_{L^\infty(D)}^2\Bigl).
\end{align}
We now use the analogue of \eqref{first.estimate} with $v^0$ and $v$ substituted by $v^{i-1}$ and $v^i$ to infer that 
\begin{align}
 \|\nabla (v^{i} - v^{i-1}) \|_{L^2(D)}^2 \lesssim \sum_{z_j \in J_{\km-i}} \!\!\! \Bigl( \| \nabla v^{i-1} \|_{L^2(B_{\theta,j} )}^2 +\eps^d \lambda_j \rho_j^{d-2} \| v^{i-1} \|_{L^\infty(D)}^2 \Bigr),
 \end{align}
 and, by \eqref{iteration.estimate} for $v^{i-1}$, that
 \begin{align}
 \|\nabla (v^{i} - v^{i-1}) \|_{L^2(D)}^2 &\lesssim \sum_{z_j \in J_{\km-i}} \!\!\! \Bigl( \| \nabla v^{i-1} \|_{L^2(B_{\theta,j} )}^2 +\eps^d \lambda_j \rho_j^{d-2} \| v \|_{L^\infty(D)}^2 \Bigr)\\
 &\lesssim \sum_{z_j \in J_{\km-i}} \| \nabla (v^{i-1} -v) \|_{L^2(B_{\theta,j} )}^2 + \!\!\! \sum_{z_j \in J_{\km-i}} \!\!\! \Bigl( \| \nabla v \|_{L^2(B_{\theta,j} )}^2 +\eps^d \lambda_j \rho_j^{d-2} \| v \|_{L^\infty(D)}^2 \Bigr).
\end{align}
Since all $B_{\theta, j}$, $z_j \in J_{\km - i}$, are disjoint, this implies that
 \begin{align}
 \|\nabla (v^{i} - v^{i-1}) \|_{L^2(D)}^2 &\lesssim \| \nabla (v^{i-1} -v) \|_{L^2(D)}^2 + \!\!\! \sum_{z_j \in J_{\km-i}} \!\!\! \Bigl( \| \nabla v \|_{L^2(B_{\theta,j} )}^2 +\eps^d \lambda_j \rho_j^{d-2} \| v \|_{L^\infty(D)}^2 \Bigr).
\end{align}
We may apply  the induction hypothesis on $v^{i-1}$ again and combine the above estimate with \eqref{estimate.induction.1} to conclude \eqref{strong.convergence} for $v^i$. The proof of \eqref{strong.convergence} is complete.

\smallskip

Equipped with \eqref{iteration.estimate}, \eqref{vanishing.set} and \eqref{strong.convergence}, we finally set $v^\eps_b := v^{\km + 3}$ and show 
that this choice fulfils all the conditions in \eqref{Rv.bad}: The first and the second line in \eqref{Rv.bad} follow immediately by construction and the definition  
\eqref{D_b} of $D^\eps_b$. The second estimate in \eqref{iteration.estimate} with $i=\km + 3$ yields also the last inequality in \eqref{Rv.bad}. 
It thus only remain to show that, almost surely, $v^\eps_b \in H^1_0(D)$ for $\eps$ small enough and $v^\eps_b \to v $ in $H^1_0(D)$.

 To do this, we begin by showing that $\nabla (v^\eps_b -v) \to 0$ in $L^2(D)$: By   \eqref{strong.convergence} with $i= \km +3$ and the fact that  $v \in C^\infty_0(D)$, we indeed obtain
\begin{align}
\|\nabla(v^\eps_b -v) \|_{L^2(D)} &\lesssim \| v\|_{C^1(D)} \sum_{z_j \in J }\bigr( (\aeps \rho_j)^2 + 1 \bigl)\eps^d \rho_j^{d-2}.
\end{align}
We recall that the set $J$ depends on $\eps$, i.e. $J= J^\eps$. In addition, since $J \subset \I$ (cf. Lemma \ref{l.geometric.v2}) and $n^\eps = \Phi^\eps(D) \backslash I^\eps$, the limit in \eqref{good.set} of Lemma \ref{l.geometry} yields that almost surely
$\eps^d \#J^\eps \to 0$ when $\eps \downarrow 0^+$. This, together with  \eqref{bar.H^b}, \eqref{power.law} and the Strong Law of Large numbers (cf. Lemma \eqref{l.LLN.index.set} in the Appendix) implies that the right-hand side above almost surely vanishes in the limit
$\eps \downarrow 0^+$. Hence, we showed that  $\nabla (v^\eps_b -v) \to 0$ in $L^2(\Rd)$. By Poincar\'e's inequality, it now suffices to argue that almost surely and for $\eps$ small enough $v^\eps_b \in H^1_0(D)$ to infer that $v^\eps_b \to v$ in $H^1_0(D)$ and thus conclude the proof of \eqref{Rv.bad}.

\medskip

Let $K \Subset D$ be a compact set containing the support of $v$, and set $r= \operatorname{dist}(K, D) > 0$. We show that, almost surely, $v^\eps_b \in H^1_0(D)$ for all $\eps \leq \bar\eps$, with $\bar \eps= \bar\eps(r,\omega) >0$. To do so, we fix any realization $\omega \in \Omega$ (which is independent from $v$) for which we have \eqref{delta.max}, and resort to the construction of $v^\eps_b$ via the solutions $v^0, v^1 \cdots v^{\km +3}$ obtained by iterating \eqref{reduction.1}. We claim that for all $i=0, \cdots, \km +3$ we have 
\begin{align}\label{iteration.support}
\operatorname{supp}(v^i) =: K^\eps_i \subset D, \ \ \ \ \operatorname{dist}(K^\eps, D) \geq  r - 2(i+1)\theta \Lambda \eps^{2\delta d},
\end{align}
for all $\eps$ such that the right-hand side in the last inequality is positive. Since $v^\eps_b := v^{\km +3}$, we may choose $\bar \eps (r,\omega)$ such that $\eps^{2\delta d}\leq \frac{r}{4(\km + 4)\theta \Lambda}$ and use the above estimate to infer that $v^\eps_b$ is compactly supported in $D$ for all $\eps \leq \bar\eps(r,\omega)$.

\smallskip

We prove \eqref{iteration.support} iteratively and begin with $i=0$: By \eqref{reduction.1} and the assumption on the support of $v$, it follows that,
 if for $z_i \in J_{\km}$ the ball $B_{\theta, i}$ does not intersect the support $K$ of $v$, then $v^0 = v \equiv 0$ on $B_{\theta,i}$. 
This, together with property \eqref{similar.size.apart} of Lemma \ref{l.geometric.v2}, implies that
\begin{align}\label{support.1}
\operatorname{supp}(v^0) \subset K \bigcup_{z_i \in J_{\km}, \atop B_{\theta, i} \cap K \neq \emptyset}B_{\theta,i}.
\end{align}
By recalling that thanks to Lemma \ref{l.geometric.v2} each ball $ B_{\theta,j}$ has radius
$$
\theta \lambda_i\aeps \rho_i \leq \theta \Lambda \aeps\rho_i \stackrel{\eqref{delta.max}}{\leq}\theta \Lambda\eps^{2d\delta},
$$
we observe that \eqref{support.1} yields estimate \eqref{iteration.support} for $v^0$. Let us now assume \eqref{iteration.support} for $v^i$. Then, since $v^{i+1}$ solves \eqref{reduction.1} with boundary datum $v_i$, we may argue as above to infer that
\begin{align}
K^\eps_{i+1} \subset K^\eps_i \bigcup_{z_i \in J_{\km}, \atop B_{\theta, i} \cap K_i^\eps \neq \emptyset}B_{\theta,i}
\end{align}
and thus that
\begin{align}
\operatorname{dist}(K^\eps_{i+1}, D) \geq \operatorname{dist}(K^\eps_{i}, D) - 2\theta \Lambda \eps^{2d\delta} \stackrel{\eqref{iteration.support}}{\geq} r - 2(i+1)\theta \Lambda \eps^{2d\delta}.
\end{align}
This concludes the iterated estimate \eqref{iteration.support}, which completes the proof of this step.

\medskip

{\bf Step 3. Construction of $v^\eps_g$ satisfying \eqref{Rv.good}.} We now turn to the remaining set $D\backslash D^\eps_b$ and construct the vector field $v^\eps_g$
 in a way similar to \cite{AllaireARMA1990a}[Subsection 2.3.2] and \cite{Desvillettes2008}. 

\smallskip

For every $z_i \in n^\eps$, we write
\begin{align}\label{abbreviations}
a_{\eps,i}:= \aeps \rho_i, \ \ \ \ \ d_i:= \min \biggl\{ \operatorname{dist}(\eps z_i, D^\eps_b), \frac 1 2 \min_{z_j \in n^\eps, \atop z_j \neq z_i} \bigl( \eps | z_i - z_j| \bigr), \eps \biggr\}
\end{align}
and 
\begin{align}\label{notation.Allaire}
T_i = B_{a_{\eps,i}} (\eps z_i), \ \ B_i:= B_{\frac {d_i}{ 2}} (\eps z_i), \ \ B_{2,i}:= B_{d_i}(\eps z_i), \ \ C_i:= B_i \backslash T_i, \ \ D_i:= B_{2,i} \backslash B_i.
\end{align}
We remark that, since $z_i \in n^\eps$, Lemma \ref{l.geometry} implies that for $\delta > 0$ 
\begin{align}\label{bound.aeps}
a_{\eps,i} \leq \eps^{1+2 \delta}, \ \ \ \ d_i \geq \eps^{1+\delta},
\end{align}
and that all the balls $B_{2,i}$ are pairwise disjoint. 

\smallskip

For each $z_i \in n^\eps$, we define the function $v^\eps_g$ in $B_{2,i}$ in the following way:
\begin{align}\label{problem.Ci}
\begin{cases}
v^\eps_{g}= 0 \ \ \ &\text{ in $T_i$}\\
v^{\eps}_g= v -\tilde v^\eps_i \ \ \ &\text{ in $C_i$,}
\end{cases}
\end{align} 
where $\tilde v^\eps_i$ solves
\begin{align}\label{eq.Ci}
\begin{cases}
-\Delta \tilde v^\eps_i + \nabla \pi^\eps_i = 0 \ \ \ &\text{ in $\Rd \backslash T_i$}\\
\nabla \cdot \tilde v^\eps_i = 0 \ \ \ &\text{in $\Rd \backslash B_1$}\\
\tilde v^\eps_i = v  \ \ \ &\text{on $\partial T_i$}\\
\tilde v^\eps_i \to 0 \ \ \ &\text{ for $|x| \to +\infty$.}
\end{cases}
\end{align}
Finally, we require that on $D_i$, $v^\eps_g$ solves
\begin{align}\label{eq.Di}
 \begin{cases}
 -\Delta v^\eps_g + \nabla q^\eps_g = \Delta v \ \ &\text{in $D_i$}\\
 \nabla \cdot v^\eps_g = 0 \ \ &\text{in $D_i$}\\
 v^\eps_g= v  \ \ &\text{on $\partial B_{2,i}$}\\
 v^{\eps}_g= v - \tilde v^\eps_i   \ \ &\text{on $\partial B_i$,}
 \end{cases}
\end{align}
and we then extend $v^\eps_g$ by $v$ on $\Rd \backslash \bigcup_{z_i \in n^\eps} B_{2,i}$. By Lemma \ref{l.geometry} and the definition \eqref{abbreviations} of $d_i$, we have that $D^\eps_b \subset \Rd \backslash \bigcup_{z_i \in n^\eps} B_{2,i}$. Therefore, this definition of $v^\eps_g$ satisfies the first line of \eqref{Rv.good} and property \ref{pro.0.inside.holes} with $H^\eps$ substituted by $H^\eps_g$. It is immediate that by construction $\nabla \cdot v^\eps_g=0$ in $D$, i.e. $v^\eps_g$ satisfies also  property \ref{pro.divergence.free}.

\smallskip

We observe that by uniqueness of the solution to \eqref{eq.Ci}, we may rescale the domains $C_i$ and rewrite
\begin{align}\label{Ci}
v^{\eps}_g = v - \phi_\infty^{\eps,i} \bigl(\frac{\cdot - \eps z_i }{a_{\eps,i}}\bigr) \ \ \ \text{ in $C_i$,}
\end{align}
with $\phi^{\eps,i}_\infty$ solving the second system in \eqref{P.annulus} in the annulus $\Rd \backslash B_1$ and with boundary datum $\psi(x) =v \bigl(a_{i,\eps}x - \eps z_i \bigr)$. Similarly, by uniqueness of the solutions to \eqref{eq.Di} we may rescale the domains $D_i$ and write 
\begin{align}\label{Di}
v^\eps_g=  v - \phi_2^{\eps,i}( \frac{\cdot - \eps z_i }{d_i }) \ \ \ \text{ in $D_i$,}
\end{align}
with $\phi_2^{\eps,i}$ solving the first system in \eqref{P.annulus} in the annulus $B_2 \backslash B_1$ and with boundary datum $\psi(x) =\phi_\infty^{\eps,i} \bigl(\frac{d_i (x - \eps z_i )}{a_{\eps,i}}\bigr)$.

\smallskip

We now turn to properties \ref{pro.convergence.H^1} and \ref{pro.convergence.L^p} for $v^\eps_g$: We write
\begin{align}\label{split.norms.a}
 \| v^\eps_g - v \|_{L^p(\Rd)}^p& =\sum_{z_i \in n^\eps} \| v^\eps_g -v \|_{L^p(B_{2,i})}^p,\\ 
 \| \nabla( v^\eps_g -v) \|_{L^2(\Rd)}^2& = \sum_{z_i \in n^\eps}  \|\nabla( v^\eps_g -v )\|_{L^2(B_{2,i})}^2,
\end{align}
and, since $B_{2,i}= D_i \cup C_i \cup T_i$, we may further split each norm on the right hand side into the contributions on each set $D_i$, $C_i$ and $T_i$. We begin by focussing on the domains $D_i$: By \eqref{Di}, we  apply \eqref{estimate.stokes.annulus} to $\phi_2^{\eps,i}$ and infer that
\begin{align}\label{estimates.Di.2}
\|\nabla (v^\eps_g -v) \|_{L^2(D_i)}^2 &\lesssim  \|\nabla \tilde v^\eps_i \|_{L^2(D_i)}^2 + d_i^{-2} \| \tilde v^\eps_i\|_{L^2(D_i)}^2, \\
\| v^\eps_g - v \|_{C^0(D_i)} &\lesssim \|\tilde v^\eps_i \|_{C^0(\partial B_{2,i})}.
\end{align}
By using \eqref{Ci} and changing variables, we rewrite the second line above as
\begin{align}
\| v^\eps_g - v \|_{C^0(B_{2,i})} &\lesssim \| \phi_\infty^{\eps,i} \|_{C^0(\partial B_{d_i a_{i,\eps}^{-1}})}, 
\end{align}
and use \eqref{infty.behaviour} on $\phi_\infty^{\eps,i}$ to infer 
\begin{align}\label{estimates.Di}
\| v^\eps_g - v \|_{C^0(B_i)} &\lesssim \| v \|_{C^0} \Bigl(\frac{a_{i,\eps}}{d_i}\Bigr)^{d-2} \stackrel{\eqref{bound.aeps}}{\lesssim} \|v \|_{C^0}\eps^{\delta(d-2)}.
\end{align}
In particular,
\begin{align}
\label{estimates.Di.p}
	\|v_g^\eps - v\|_{L^p(D_i)}^p \lesssim a_{i,\eps}^d  \|v \|_{C^0}\eps^{\delta(d-2)} \lesssim \|v \|_{C^0} \eps^{d+\delta(d-2)}.
\end{align}
We now turn to the first inequality in \eqref{estimates.Di.2}, use \eqref{Ci} on the right-hand side, and change variables to estimate
\begin{equation} \label{estimates.Di.1}
\begin{aligned}
 \|\nabla (v^\eps_g -v) \|_{L^2(D_i)}^2 
 &\lesssim a_{\eps,i}^{d-2} \|\nabla \phi_\infty^{\eps,i}\|_{L^2(B_{d_i a_{i,\eps}^{-1}} \backslash B_{\frac 1 2 d_i a_{i,\eps}^{-1}})}^2 + a_{\eps,i}^{d}d_i^{-2} \| \phi_\infty^{\eps,i} \|_{L^2(B_{d_i a_{i,\eps}^{-1}} \backslash B_{\frac 1 2 d_i a_{i,\eps}^{-1}})}^2\\
 &\stackrel{\eqref{infty.behaviour.1}}{\lesssim}\|v \|^2_{C^1} a_{\eps,i}^{d-2} \Bigl( \frac{a_{\eps,i}}{d_i} \Bigr)^{d-2}
 \stackrel{\eqref{bound.aeps}}{\lesssim}\|v \|^2_{C^1} \eps^{d + \delta(d-2)} \rho_i^{d-2}.
\end{aligned}
\end{equation}

\smallskip

 We consider the sets $C_i$: We use the definition \eqref{Ci} for $v^\eps_g$ on $C_i$ and a change of variables to rewrite
 \begin{align}
\|\nabla (v^\eps_g -v) \|_{L^2(C_i)}^2 & = a_{\eps,i}^{d-2}\|\nabla  \phi^{\eps,i}_\infty \|_{L^2(B_{\frac 1 2 d_i a_{\eps,i}^{-1}} \backslash B_1)}^2. 
\end{align} 
Hence, using \eqref{stokes.infty} for $\phi^{\eps,i}_\infty$, we obtain
\begin{align}\label{estimate.Ci}
\|\nabla (v^\eps_g -v) \|_{L^2(C_i)}^2&\lesssim \|\nabla v \|_{L^2(B_{2 a_{\eps,i}}(\eps z_i)  \backslash T_i)}^2 + a_{\eps,i}^{-2} \| v\|_{L^2(B_{2 a_{\eps,i}}(\eps z_i)  \backslash T_i)}^2 \\
&\lesssim a_{\eps,i}^{d-2} \| v \|_{C^1}^2 = \eps^d \rho_i^{d-2} \|v\|_{C^1}^2.
\end{align}

Similarly,  by \eqref{Ci} and a change of variables, for each $2 \leq p < +\infty$ we have
\begin{align}
  \|  v^{\eps,i}_g - v\|_{L^p(C_i)}^p &= a_{\eps,i}^d \| \phi^{\eps,i}_\infty  \|_{L^p(B_{d_i a_{\eps,i}^{-1}}\backslash B_1)}^p,
\end{align}
and, thanks to the pointwise estimate \eqref{infty.behaviour} for $\phi^{\eps,i}_\infty$, we have that for all $p > \frac{d}{d-2}$
\begin{align}\label{Lp.Ci}
 \| v^\eps_g - v \|_{L^p(C_i)}^p \lesssim \|v\|_{C^0}^p a_{\eps,i}^d \stackrel{\eqref{bound.aeps}}{\lesssim}\| v \|_{C^0}^p\eps^{2+4 \delta} \eps^d\rho_i^{d-2}.
\end{align}

\smallskip

We finally turn to $T_i$, on which we easily bound
\begin{align}\label{estimates.Ti}
\|\nabla( v^\eps_g - v) \|_{L^2(T_i)}^2= \|\nabla v \|_{L^2(T_i)}^2 \leq \| v \|_{C^1}^2 a_{\eps, i}^{d}
\stackrel{\eqref{bound.aeps}}{\lesssim}\| v \|_{C^1}^2\eps^{2(1+\delta)} \eps^d \rho_i^{d-2},\\
\|v^\eps_g -v \|_{L^p(T_i)}^p = \| v \|^p_{L^p(T_i)}  \stackrel{\eqref{bound.aeps}}{\lesssim} \|v\|_{C^0}^p \eps^{2(1+2\delta)} \rho^{d-2}.
\end{align}

\smallskip

By collecting all the estimates in \eqref{estimates.Di.p}, \eqref{estimates.Di.1}, \eqref{estimate.Ci}, \eqref{Lp.Ci} and  \eqref{estimates.Ti} we get
\begin{align}\label{L2.estimates.loc}
\| \nabla v^\eps_g - v\|^2_{L^2(B_{2,i})}  \lesssim \| v \|^2_{C^1} \eps^d\rho_i^{d-2},
\end{align}
and for all $p > \frac{d}{d-2}$
\begin{align}
\| v^\eps_g - v \|_{L^p(B_{2,i})}^p \lesssim \| v \|_{C^\infty} \eps^d \bigl(\eps^{2} \rho_i^{d-2} + \eps^{\delta p (d-2)}\bigr).
\end{align}
We insert these estimates in \eqref{split.norms.a} and apply \eqref{power.law} and the Strong Law of Large Numbers on the right-hand sides to conclude that almost surely
\begin{align}
\| \nabla v^\eps_g \|_{L^2(D)} \lesssim 1
\end{align}
and that $v^\eps_g \to v$ in $L^p(D)$ for $p > \frac{d}{d-2}$. Since $v, v^\eps_g$ are supported in the bounded domain $D$ for $\eps$ small enough, we conclude properties \ref{pro.convergence.H^1}
and \ref{pro.convergence.L^p} for $v^\eps_g$.
 
\smallskip

We finally turn to property \ref{pro.capacity}.  We use an argument very similar to the one for Lemma 3.1 of \cite{GHV1}. For any $N \in \N$ fixed and all $z_i \in n^\eps$, let us define
\begin{align}
n^\eps_N := \Bigl\{ z_i \in n^\eps \, \colon \, d_i \geq \frac \eps {N} \Bigr\},
\end{align}
where $Q \subset \Rd$ is a unit cube.
Moreover, let $\rr^N:=\{ \rho^N_i \}_{z_i \in n^\eps}$ be the truncated environment given by $\rho_i^N:= \rho_i \wedge N$ and let $H^{\eps,N}_g$ be the set of holes generated by $n^\eps_N$ with $\rr^N$. 
Let $v^{\eps,N}_g$ be the analogues of $v^\eps_g$ for $H^{\eps,N}_g$. We begin by showing that  $v^{\eps,N}_g$ satisfy property \ref{pro.capacity} on
$H^{\eps,N}_g$ with 
\begin{align}\label{measure.truncated}
\mu^N = C_d \langle (\rho^N)^{d-2}\rangle \langle \#(N_{\frac{2}{N}}(Q)) \rangle,
\end{align}
where $Q$ is a unit ball and $N_{\frac{2}{N}}$ is defined in Subsection \eqref{sec:notation}.

Before showing this, we argue how to conclude also property \ref{pro.capacity} for $v^\eps_g$: Let $u_\eps\in H^1_0(D_\eps)$ such that $u_\eps \rightharpoonup u$ in
$H^1(D)$. For each $N \in \N$ fixed we bound
\begin{align}
 \limsup_{\eps \downarrow 0^+}&\biggl| \int \nabla v^\eps_g \cdot \nabla u_\eps  - \biggl(\int \nabla v \cdot \nabla u + \int v \cdot \mu u\biggr)\biggr|\\
 & \leq
 \limsup_{\eps \downarrow 0^+} \biggl|\int \nabla  v^{\eps,N}_g \cdot \nabla u_\eps  - \biggl(\int \nabla v \cdot \nabla u + \int v \cdot \mu u\biggr)\biggr|
  +  \limsup_{\eps \downarrow 0^+}\biggl| \int \nabla (v^\eps_g- v^{\eps,N}_g) \cdot \nabla u_\eps\biggr|.
\end{align}
 Since $H^{\eps,N}_g \subset H^\eps_g$, property \ref{pro.capacity} for $v^{\eps,N}_g$ yields
\begin{align}\label{estimates.H3}
 \limsup_{\eps \downarrow 0^+}\biggl|& \int \nabla v^\eps_g \cdot \nabla u_\eps  - \biggl(\int \nabla v \cdot \nabla u + \int v \cdot \mu u\biggr)\biggr|\\
 & \leq
 \biggl|\int v \cdot (\mu - \mu^N)  u\biggr| +  \limsup_{\eps \downarrow 0^+}\biggl| \int \nabla (v^\eps_g- v^{\eps,N}_g) \cdot \nabla u_\eps\biggr|.
\end{align}
We now appeal to the explicit construction of the functions $v^\eps_g, v^{\eps,N}_g$ to observe that 
\begin{align}
\supp( v^\eps_g - v^{\eps,N}_g) &\subset \bigcup_{z_i \in n^\eps_N,\atop \rho_i \geq N} B_{2,i} \cup \bigcup_{z_i \in n^\eps \backslash n^\eps_N} B_{2,i},\\
 v^\eps_g - v^{\eps,N}_g &= v^\eps_g \ \ \ \ \text{ in $ \bigcup_{z_i \in n^\eps \backslash n^\eps_N} B_{2,i}$.}
\end{align}
Therefore, 
\begin{align}
\|\nabla(v^\eps_g- v^{\eps,N}_g)\|_{L^2(D)}^2 
 &\lesssim  \sum_{z_i \in n^\eps_N,\atop \rho_i \geq N}
 \| \nabla (v^\eps_g- v^{\eps,N}_g)\|_{L^2(B_{2,i})}^2 +  \sum_{z_i \in n^\eps \backslash n^\eps_N} \| \nabla v^\eps_g\|_{L^2(B_{2,i})}^2.
\end{align}
We smuggle in the norms on the right-hand side the function $v$ and appeal to \eqref{L2.estimates.loc} for $v^\eps_g$ (and the analogue for $v^{\eps,N}_g$) to get that 
\begin{align}\label{estimate.difference.1}
 \|\nabla(v^\eps_g- v^{\eps,N}_g)\|_{L^2(D)}^2  \lesssim \|v\|_{C^1(D)}
 \eps^d\biggl(\sum_{z_i \in n^\eps}\rho_i^{d-2} \1_{\rho_i \geq N} + \sum_{z_i \in n^\eps \backslash n^\eps_N}(1+ \rho_i^{d-2}) \biggr) .
\end{align}
Assumption \eqref{power.law} and the Strong Law of the Large Numbers yield that almost surely
\begin{align}
\sum_{z_i \in n^\eps}\rho_i^{d-2} \1_{\rho_i \geq N} \to \langle \rho \1_{\rho \geq N} \rangle.
\end{align}
Moreover, by \eqref{good.set} and \eqref{small.distance.bad} of Lemma \ref{l.geometry}, and \eqref{convergence.average} of Lemma \ref{SLLN.general.pp}, we have that almost surely 
\begin{align}\label{small.error.n.eps}
\lim_{N \uparrow +\infty} \lim_{\eps \downarrow 0^+}\eps^d \#(n^\eps \backslash n^\eps_N) = 0.
\end{align}
This yields by Lemma \ref{l.LLN.index.set} that  
\begin{align}
\lim_{N \uparrow +\infty} \lim_{\eps \downarrow 0^+}  \|\nabla(v^\eps_g- v^{\eps,N}_g)\|_{L^2(D)} = 0.
\end{align}
Since $\nabla u_\eps$ is uniformly bounded in $L^2(D)$, we can insert this in \eqref{estimates.H3} to conclude
\begin{align}
 \limsup_{\eps \downarrow 0^+}\biggl|& \int \nabla v^\eps_g \cdot \nabla u_\eps - \biggl(\int \nabla v \cdot \nabla u + \int v \cdot \mu u\biggr)\biggr|
  \lesssim \limsup_{N \uparrow +\infty} \biggl|\int v \cdot (\mu - \mu^N) u\biggr|.
 \end{align}
By using again assumption \eqref{power.law} and  \eqref{small.error.n.eps} we infer that the right-hand side above vanishes almost surely and conclude property \ref{pro.capacity} for $v^\eps_g$ with $\mu$ as in Theorem \ref{t.main}. 

\smallskip

We now turn to property \ref{pro.capacity} for $v^{\eps,N}_g$. When no ambiguity occurs, we drop the upper index $N$. For every $u_\eps$ as above, we split the integral
\begin{align}
\int \nabla v^{\eps}_g \cdot \nabla u_\eps = \int \nabla v \cdot \nabla u_\eps - \int \nabla (v^{\eps}_g- v) \cdot \nabla u_\eps.
\end{align}
The first term converges to $\int \nabla v \cdot \nabla u$ by the assumption on the sequence $u_\eps$. To conclude property \ref{pro.capacity} it thus remains to argue that
\begin{align}\label{strange.term.1}
 \int \nabla (v^{\eps}_g- v) \cdot \nabla u_\eps \to \int v \cdot \mu^N  u.
\end{align}

\smallskip

To prove this, we recall the construction of $v^\eps_g$, and we split the integral into 
\begin{align}
\int \nabla (v^{\eps}_g- v) \cdot \nabla u_\eps &= \sum_{z_i \in n^\eps} \int_{C_i}\nabla( v^{\eps}_g - v) \cdot \nabla u_\eps + \sum_{z_i \in n^\eps} \int_{D_i}\nabla( v^{\eps}_g - v) \cdot \nabla u_\eps.
\end{align}
Note that the integral on each $T_i$ vanishes by the assumption $u_\eps \in H^1_0(D^\eps)$. We first focus on the second sum on the right-hand side above and use Cauchy-Schwarz and \eqref{estimates.Di.1} to bound
\begin{align}
\sum_{z_i \in n^\eps} \int_{D_i}\nabla( v^{\eps}_g - v) \cdot \nabla u_\eps \lesssim \| \nabla u_\eps \|_{L^2(D)} \Bigl(\eps^{d +\delta(d-2)} \sum_{z_i \in n^\eps}\rho_i^{d-2}\Bigr)^{\frac 1 2} \|v\|_{C^\infty}.
\end{align}
By the assumption on the weak convergence for the sequence $\nabla u_\eps$ and the Strong Law of Large Numbers, the right-hand side almost surely vanishes in the limit $\eps \downarrow 0^+$. Thus,
\begin{align}\label{p5.1}
\int \nabla (v^{\eps}_g- v) \cdot \nabla u_\eps &=  \sum_{z_i \in n^\eps} \int_{C_i}\nabla( v^{\eps}_g - v) \cdot \nabla u_\eps + o(1).
\end{align}

\smallskip

We turn to the remaining term above: For each $z_i \in n^\eps$, let $(\tilde \phi^{\eps,i}_\infty, \tilde \pi^{\eps,i}_\infty)$ solve the Stokes problem \eqref{P.annulus} in the exterior domain $\Rd \backslash B_1$ and with constant boundary datum $v(\eps z_i)$.  We define
\begin{align}\label{constant.corrector}
\bar \phi_\infty = \tilde \phi_\infty( \frac{\cdot - \eps z_i }{a_{\eps,i}} ), \ \ \ \bar \pi_\infty := a_{\eps,i}^{-1} \tilde \pi_\infty(\frac{\cdot - \eps z_i }{ a_{\eps,i}}),
\end{align} 
and smuggle these functions in each one of the integrals over $C_i$. This yields
\begin{align}\label{p5.2}
\sum_{z_i \in n^\eps} \int_{C_i}\nabla( v^{\eps}_g - v) \cdot \nabla u_\eps = \sum_{z_i \in n^\eps} \int_{C_i}\nabla( v_g^\eps -v - \bar \phi_\infty^{\eps,i}) \cdot \nabla u_\eps +  \sum_{z_i \in n^\eps} \int_{C_i}\nabla(\bar \phi_\infty^{\eps,i}) \cdot \nabla u_\eps.
\end{align}

\smallskip

We claim that the first integral on the right-hand side vanishes in the limit $\eps \downarrow 0^+$: By \eqref{Ci} and \eqref{constant.corrector}, each difference $v_g^\eps -v - \bar \phi_\infty^{\eps,i}$  solves the second system in \eqref{P.annulus} in $\Rd \backslash T_i$ with boundary datum $\psi= v - v(\eps z_i)$. 
Therefore, by the first inequality in \eqref{stokes.infty},
 \begin{align}
\|\nabla (v_g^\eps -v - \bar \phi_\infty^{\eps,i}) \|_{L^2(C_i)}^2 & \lesssim \|\nabla v \|_{L^2(B_{2 a_{\eps,i}}(\eps z_i)  \backslash T_i)}^2 + a_{\eps,i}^{-2} \| v - v(\eps z_i)\|_{L^2(B_{2 a_{\eps,i}}(\eps z_i)  \backslash T_i)}^2.
\end{align} 

As the vector field $v$ is smooth, we use a Lipschitz estimate on the last term, and conclude that
\begin{align}
\|\nabla( v^{\eps}_g - v - \bar \phi_\infty^{\eps,i})\|_{L^2(C_i)}^2 \lesssim \| v\|^2_{C^1} a_{\eps,i}^{d}  \stackrel{\eqref{bound.aeps}}{\lesssim}  \| v\|^2_{C^1}\eps^{2 + 4\delta} \eps^d \rho_i^{d-2}.
\end{align}
By Cauchy-Schwarz inequality and this last estimate we find
\begin{align}
 \sum_{z_i \in n^\eps} \int_{C_i}\nabla( v^{\eps}_g - v - \bar \phi_\infty^{\eps,i}) \cdot \nabla u_\eps \leq \| \nabla u_\eps \|_{L^2} \Bigl(\eps^{2 + d} \sum_{z_i \in n^\eps } \rho_i^{d-2} \Bigr)^{\frac 1 2},
\end{align}
and use the the Strong Law of Large Numbers to conclude that almost surely the above right-hand side vanishes. This, together with \eqref{p5.2} and \eqref{p5.1}, yields
\begin{align}\label{p5.3}
\int \nabla (v^{\eps}_g- v) \cdot \nabla u_\eps =  \sum_{z_i \in n^\eps} \int_{C_i}\nabla \bar \phi_\infty^{\eps,i} \cdot \nabla u_\eps + o(1).
\end{align}

\smallskip

We now integrate the first integral on the right-hand side above by parts and, since $u_\eps$ vanishes in $T_i$, we obtain
\begin{align}
 \int_{C_i}\nabla \bar \phi_\infty^{\eps,i} \cdot \nabla u_\eps &= - \sum_{z_i \in n^\eps} \int_{C_i}\Delta \bar \phi_\infty^{\eps,i}  u_\eps 
  + \int_{\partial B_i} \partial_\nu \bar \phi_\infty^{\eps,i} u_\eps,
\end{align}
where $\nu$ denotes the outer unit normal.
By using \eqref{constant.corrector}, the equation \eqref{P.annulus} for $(\bar \phi_\infty^{\eps,i}, \bar \pi_\infty^{\eps,i})$ and the fact that $\nabla \cdot u_\eps=0$ in $D$, we obtain
\begin{align}
 \int_{C_i}\nabla \bar \phi_\infty^{\eps,i} \cdot \nabla u_\eps &=  \sum_{z_i \in n^\eps} \int_{\partial B_i}( \partial_nu \bar \phi_\infty^{\eps,i} - \bar \pi^{\eps,i} \nu ) \cdot u_\eps.
\end{align}
 By wrapping this up with \eqref{p5.3}, we conclude that to show \eqref{strange.term.1} it suffices to prove that
\begin{align}\label{strange.term.2}
 \sum_{z_i \in n^\eps} \int_{\partial B_i}(\partial_\nu \bar \phi_\infty^{\eps,i} - \bar \pi^{\eps,i} \nu ) \cdot u_\eps \to \int v \cdot \mu^N u.
\end{align}
We establish \eqref{strange.term.2} as in \cite{AllaireARMA1990a}: We remark, indeed, that by the uniqueness of the solutions in \eqref{P.annulus}, for each $z_i \in n^\eps$, we have
$$
\bar \phi_\infty^{\eps,i} = \sum_{k=1}^d v_k(\eps z_i) w^\eps_{k}, \ \ \ \bar \pi^{\eps,i} = \sum_{k=1}^d v_k(\eps z_i) q^\eps_k,
$$
with $(w^\eps_k, q^\eps_k)$ the analogues of the oscillating test functions constructed in \cite{AllaireARMA1990a}[Proposition 2.1.4]. We remark that the only difference is that in this setting, the scales $a_{\eps,i}$ (i.e. the size of the holes $T_i$) depend on the index $z_i$ and are not constant but bounded by $N$ (we recall that we are considering the truncated environment $\rr^N$).  Therefore, by arguing as in Lemma 2.3.7 of \cite{AllaireARMA1990a} we use Lemma 2.3.5 of \cite{AllaireARMA1990a} and linearity to rewrite
\begin{align}\label{strange.term.3}
 \sum_{z_i \in n^\eps} \int_{\partial B_i}( \partial_\nu \bar \phi_\infty^{\eps,i} - \bar \pi^{\eps,i} \nu )u_\eps  = (\mu^N_\eps, u_\eps)_{H^{-1}, H^1_0} + r_\eps,
\end{align}
with 
\begin{align}
\mu^N_\eps =  \frac{C_d}{|B_1|} \sum_{z_i \in n^\eps} v(\eps z_i) (\rho_i^N)^{d-2} \frac{(2\eps)^d}{d_i^d} \1_{B_i}, \ \ \ r_\eps \to 0 \ \ \ \text{ in $H^{-1}(D)$.} 
\end{align}
Since $v \in C^\infty_0(D)$ and the radii $\rho^N_i$ are uniformly bounded, we can also replace $\mu^N_\eps$ by
\begin{align}
\tilde \mu^N_\eps = \frac{C_d}{|B_1|} \sum_{z_i \in n^\eps}(\rho_i^N)^{d-2} \frac{(2\eps)^d}{d_i^d} \1_{B_i} v.
\end{align}
To establish  \eqref{strange.term.2}, it remains to argue as in \cite{GHV1}[Lemma 3.1, case (b)] (see from formula (4.75) on) and appeal to Lemma \ref{l.w.independent} in \cite{GHV1}. This yields property \ref{pro.capacity} for $v^\eps_g$ and thus completes the proof of this step and of the whole lemma.
\end{proof}

\section{Probabilistic results}\label{s.probability}
The aim of this section is to give some probabilistic results on the random set $H^\eps$, in terms of the size of the clusters
generated by overlapping balls of comparable size; these results are used in Section \ref{geometry} to obtain a good covering for $H^\eps$ 
and to estimate its size.

\smallskip

We  introduce the following notation: For $\alpha \geq 1$, let
\begin{align}
H^\eps_\alpha = \bigcup_{z_i \in \Phi^\eps(D)} B_{\aeps \alpha \rho_i}(\eps z_i).
\end{align}
For a step-size $\delta > 0$, we partition the (random) collection of points $\Phi^\eps(D)$ in terms of the order of magnitude of the associated radii: We define 
\begin{align}\label{def.I_k_delta}
&I^{\eps}_{k, \delta} := \{  z_i \in \Phi^\eps(D) \ \colon \ \eps^{1-\delta k } < \aeps \rho_i \leq \eps^{1-\delta(k+1)}\} \ \ \ \text{for $k \geq -2$},\\
&I^{\eps}_{-3,\delta} := \{  z_i \in \Phi^\eps(D) \ \colon \  \aeps \rho_i \leq \eps^{1+2\delta} \},
\end{align}
and for every $k \geq -2$ also 
\begin{align}\label{Psi.delta.k}
\Psi^{k, \eps}_\delta= I_k^{\eps} \cup I_{k-1}^{\eps} \subset \Phi^\eps(D).
\end{align}
Each collection $\Psi^{k,\eps}_\delta$ thus generates the set 
\begin{align}\label{comparable.components}
H^{\delta,\eps}_{k,\alpha} := \bigcup_{z_i \in \Psi^{k,\eps}_\delta} B_{\aeps \alpha \rho_i}(\eps z_i) \subset H^\eps_\alpha
\end{align}
which is made of balls having radii which differ by at most two orders $\delta$ of magnitude.

\smallskip

\begin{lem}\label{l.chains}
Let $\alpha \geq 1$ and $0< \delta < \frac{\beta}{2d}$ be fixed. Then, there exists $M(d,\beta),  \km(\beta,d) \in \N$ such that for almost every
$\omega \in \Omega$  and every $\eps \leq \eps_0(\omega)$
\begin{itemize}
 \item[(I)] For every $k > \km$ we have
 \begin{align}
	\label{existence.k_max}
	I_{\eps,\delta}^k = \emptyset;
\end{align}

\smallskip

 \item[(II)] For every $-2 \leq k \leq \km$, each connected component of $H^\eps_{k,\alpha}$ defined in \eqref{comparable.components} is made of at most $M$ balls.
\end{itemize}
\end{lem}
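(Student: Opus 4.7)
The plan is to split the argument along the two stated parts. \textbf{Part (I)} will follow from the strong law of large numbers and the moment assumption \eqref{power.law}, essentially as in the computation leading to \eqref{delta.max} in the introduction. Applying SLLN to $\{\rho_i^{d-2+\beta}\}$ over the points of $\Phi^\eps(D)$ and using $\aeps = \eps^{d/(d-2)}$ gives, almost surely for $\eps$ small,
\[
 \aeps \max_{z_i \in \Phi^\eps(D)} \rho_i \;\lesssim\; \eps^{\gamma}, \qquad \gamma := \tfrac{d}{d-2} - \tfrac{d}{d-2+\beta} \;=\; \tfrac{d\beta}{(d-2)(d-2+\beta)}.
\]
Since $I^{\eps}_{k,\delta} \neq \emptyset$ forces some $\aeps\rho_i > \eps^{1-\delta k}$, one gets $\delta k < 1-\gamma + o(1)$, and \eqref{existence.k_max} follows by setting $\km := \max\{-3,\lceil (1-\gamma)/\delta\rceil\}$.

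\textbf{Part (II)} is the substantive step, and I would handle it via the Slivnyak--Mecke formula for the marked Poisson point process. Centres in $\Psi^{k,\eps}_\delta$ have radii in an interval $(R_{\min},R_{\max}]$ with $R_{\min} = \eps^{1-\delta(k-1)-d/(d-2)}$ and $R_{\max} = \eps^{-2\delta}R_{\min}$, and the chain condition forces intersecting balls to have centres within distance $\alpha(\aeps\rho_i + \aeps\rho_j) \leq 2\alpha\eps^{1-\delta(k+1)}$. Using
\[
 p := \P(\rho \in (R_{\min},R_{\max}]) \;\leq\; C R_{\min}^{-(d-2+\beta)} \;=\; C\eps^{\zeta + (d-2+\beta)\delta(k-1)}, \qquad \zeta := 2 + \tfrac{2\beta}{d-2},
\]
together with the scaled intensity $\lambda/\eps^d$, the contribution per additional chained vertex is of order $\alpha^d\eps^{E}$ where $E = \zeta + \delta[(\beta-2)k - (2d-2+\beta)]$. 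The critical algebraic fact I will verify is that $E \geq c_1(d,\beta) > 0$ \emph{uniformly} in $k \in \{-2,\dots,\km\}$; this follows from the bound $\delta \km \leq 1-\gamma+\delta$ combined with $\delta < \beta/(2d)$ and the identity $(d-2+\beta)\gamma = d\beta/(d-2)$, which together give $E \geq \zeta - (2-\beta)(1-\gamma) - 2d\delta > 0$.

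With this in hand, Slivnyak--Mecke bounds the expected number of ordered chains of length $M+1$ in $\Psi^{k,\eps}_\delta$ (and hence of connected components of that size, up to a combinatorial factor from Cayley's formula) by $C^M\alpha^{dM}|D|\eps^{F+ME}$ where $F = -d + \zeta + O(\delta)$ depends only on $d,\beta$. Choosing $M = M(d,\beta)$ large enough that $F + ME > c_2 > 0$, summing over the finitely many admissible $k$, and invoking Borel--Cantelli along the geometric subsequence $\eps_n = 2^{-n}$ gives the claim on this subsequence. The extension to all $\eps \leq \eps_0(\omega)$ is obtained by comparing: for $\eps \in [\eps_n,\eps_{n-1}]$, any chain at scale $\eps$ yields a chain at scale $\eps_n$ after slightly enlarging $\alpha$ and $\delta$, so the subsequence bound with these inflated parameters suffices. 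The main technical obstacle is the uniform positivity of $E$, which is tight at the extreme value $k=\km$ and relies precisely on the hypothesis $\delta < \beta/(2d)$; a secondary obstacle is the dyadic-to-continuous extension, since $\aeps$ depends on $\eps$ non-linearly and the radius ranges $(R_{\min},R_{\max}]$ shift with $\eps$, so the comparison between scales must be done with some care.
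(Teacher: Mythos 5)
Your Part (I) is essentially the paper's argument, and for Part (II) you take a genuinely different, and in some ways cleaner, route: the paper does not invoke Slivnyak--Mecke but instead covers $\tfrac{1}{\eps}D$ by a lattice of cubes of side $\eta_k^\eps = m\alpha\eps^{\kappa_k}$ (with $\kappa_k = -\delta(k+1)$) and applies the elementary Poisson tail bound $\P(N(\text{cube}) \geq m) \lesssim (\lambda_k^\eps (\eta_k^\eps)^d)^m$. The crucial algebraic object is the same in both approaches: what you call $E$ is exactly the paper's $\mu_k = \gamma_k + d\kappa_k$, with $\gamma_k = (d-2+\beta)(\tfrac{2}{d-2} + \delta(k-1))$, and the uniform lower bound $E \geq \beta - 2d\delta > 0$ under $\delta < \beta/(2d)$ is precisely the paper's computation, so your per-vertex contribution is correct for $k \geq -1$. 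The moment (Mecke) method buys you a shorter path from ``per-vertex exponent positive'' to ``$\P(A^{\alpha,M}_{\eps,k})$ summable'', at the cost of tracking the Cayley combinatorics; the paper's cube method is more hands-on but avoids any appeal to Mecke.

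Two points deserve attention. First, your tail estimate $p \leq C R_{\min}^{-(d-2+\beta)}$ becomes vacuous at $k = -2$, since $\Psi^{-2,\eps}_\delta = I^\eps_{-2}\cup I^\eps_{-3}$ has no nontrivial lower bound on $\rho$; the paper isolates this case, using only $\lambda^\eps_{-2} \leq \lambda$ together with $\kappa_{-2} = \delta > 0$, giving per-vertex exponent $d\delta$ rather than your formula's $\zeta + O(\delta)$. The conclusion (positivity, hence an admissible $M$) survives, but your stated $E$-formula is not literally correct for $k=-2$ and this case needs to be split off. Second, the dyadic-to-continuous extension, which you flag but do not carry out, is where the parameters get inflated: the paper shows that a chain at an arbitrary $\eps \in [\eps_{l+1},\eps_l]$ with parameters $(\alpha,\delta)$ produces a chain at $\eps_{l+1}$ with parameters $(\bar\alpha, 3\delta)$, $\bar\alpha = 2^{2/(d-2)}\alpha$, which means the whole exponent calculation (including the bound on $E$) must actually go through with $3\delta$ in place of $\delta$. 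If you adopt this route you should make this inflation explicit before fixing $M$, since $M$ depends on $\delta$ through the $k=-2$ case.
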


\smallskip

\begin{proof}[Proof of Lemma \ref{l.chains}]
We begin with (I) and observe that assumption \eqref{power.law} and Chebyshev's inequality imply that for a constant $C< +\infty$
\begin{align}
	\label{probability.rho}
	\langle \rho^{d-2+\beta} \rangle \leq C, \qquad \P(\rho \geq r) \leq C r^{-(d-2 + \beta)}.
\end{align}
In addition, as already argued in Section 4 (see \eqref{delta.max}),\eqref{power.law} and the Strong Law of Large Numbers (see Lemma \ref{SLLN.general.pp}) imply that
for almost every $\omega \in \Omega$ and all  $\eps$ sufficiently small
\begin{align}
	\label{max.radius}
	\max_{z_i \in \Phi^\eps(D) }\aeps \rho_i \leq 2 \eps^{\frac{d}{d-2} - \frac{d}{d-2 + \beta}} \langle \rho^{d-2+\beta} \rangle^{\frac{1}{d-2+\beta}}.
\end{align}
Hence, for the same choice of $\omega$ and $\eps$ we have $I^{k} = \emptyset$ whenever $k > \km$ with
\begin{align}\label{k.times.delta}
\\
\eps^{1 - \delta (\km +1)} < \eps^{\frac{d}{d-2} - \frac{d}{d-2 + \beta}}, 
\end{align}
namely if
\begin{align}\label{k_max.bound}
1 - \delta(\km +1) <  \frac{d}{d-2} - \frac{d}{d-2 + \beta}.
\end{align}
We may thus choose the minimal $k_{max}$ satisfying the inequality above and conclude the proof for (II).

\medskip

We now turn to (II) and fix $-2 \leq k \leq \km$: For any $m \in \N$ we consider the event
\begin{align}
A^{\alpha, m}_{\eps, \delta, k}:= \{ \omega \, \colon \, &\text{There exist $m$ intersecting balls in $H^{\delta,\eps}_{k,\alpha}$} \}.
\end{align}
Then, (II) is equivalent to show that there exists an integer $M = M(\beta, d)\geq 2$ such that
\begin{align}\label{probability.chains}
\P\biggl( \bigcap_{\eps_0> 0} \bigcup_{\eps \leq \eps_0} \bigcup_{k \geq -2 }A_{\eps,\delta, k}^{\alpha, M} \biggr)= 0.
\end{align}
Furthermore, we begin by arguing that it suffices to prove that 
\begin{align}
	\label{reduction.eps_l}
	\P\biggl( \bigcap_{l_0 \in \N} \bigcup_{l \geq l_0} \bigcup_{k  \geq -2}A_{2^{-l},3\delta, k}^{\bar \alpha, M} \biggr)= 0,
\end{align}
i.e. statement \eqref{probability.chains} for the sequence $\eps_l = 2^{-l}$ and $\alpha, \delta$ substituted by $\bar \alpha= 2^{\frac{2}{d-2}} \alpha$ and $3\delta$.

\smallskip
	
Suppose, indeed, that \eqref{reduction.eps_l} holds: For any $\eps >0$, let $l\in \N$ be such that $\eps_{l+1} \leq \eps \leq \eps_{l}$. Then for every two 
 $z_i, z_{j} \in \Psi^{k,\delta, \eps}$ with $\rho_i \geq \rho_j$, definition \eqref{def.I_k_delta} yields that
\begin{align}
\rho_i - \rho_j \leq \rho_j (\frac{\rho_i}{\rho_j} -1) \leq \rho_j( \eps_{l+1}^{- 2 \delta} -1)\leq \rho_j \eps_{l+1}^{-3\delta}.
\end{align}
This implies that if $\rho_j \in I_{\tilde k-1}^{\eps_{l+1}, 3\delta}$ for some $\tilde k \in \Z$, then $\rho_i \in I_{\tilde k}^{\eps_{l+1}, 3\delta}$. This is equivalent 
to
\begin{align}\label{inclusion.1}
\Psi^{\delta, \eps}_k \subset  \Psi^{,3\delta, \eps_{l+1}}_{\tilde k}.
\end{align}
Equipped with this inclusion, we now show that
\begin{align}\label{inclusion.2}
A_{\eps,\delta,k}^{\alpha,m}  \subset A_{\eps_{l+1},3 \delta, \tilde k}^{\bar\alpha,m}.
\end{align}
To do so, let us assume that $z_i , z_j \in \Psi^{\delta, \eps}_k$ satisfy
	\begin{align}
		\label{collision.0}
		B_{ \alpha \aeps \rho_j}(\eps z_j) \cap B_{\alpha \aeps \rho_i}(\eps z_i) \neq \emptyset.
	\end{align}
Then,
\begin{align}
	\eps |z_i - z_j| \leq \alpha \aeps (\rho_i + \rho_j)
\end{align}
which yields
\begin{align}
	 |z_i - z_j| \leq \alpha \eps^{\frac 2 {d-2}}  (\rho_i + \rho_j) \leq \alpha \eps_l^{\frac 2 {d-2}}  (\rho_i + \rho_j) 
	 = 2^{\frac 2 {d-2}} \alpha \eps_{l+1}^{\frac 2 {d-2}}  (\rho_i + \rho_j).
\end{align}
This is equivalent to
	\begin{align}
		B_{ \bar \alpha \eps_{l+1}^{\frac{d}{d-2}}\rho_j}(\eps_{l+1} z_j) \cap B_{\bar \alpha \eps_{l+1}^{\frac{d}{d-2}}\rho_i}(\eps_{l+1} z_i) \neq \emptyset.
	\end{align}
Since the previous argument holds for any choice of two elements in $\Psi^{k, \delta, \eps}$, this and \eqref{inclusion.1} imply \eqref{inclusion.2}. This last 
statement allows also to conclude that for every $m \in \Z$
\begin{align}\label{monotonicity.1}
\bigcup_{k \geq -2} A_{\eps,\delta,k}^{\alpha,m} \subset \bigcup_{k \geq -2} A_{\eps_{l+1},2 \delta,k}^{\bar\alpha,m}.
\end{align}
This establishes that \eqref{reduction.eps_l} implies \eqref{probability.chains}.
 
\medskip
 
To conclude the proof of (II), it only remains to show \eqref{reduction.eps_l}: We begin by deriving a basic estimate for the probability of having a certain number 
of close points in a Poisson point process. We recall indeed that the centres $\Phi^\eps(D)$ are distributed according to a Poisson point process in $1 \eps D$ with intensity
 $\lambda$. We also recall that, for a general set $A \subset \Rd$ we denote by $N(A)$ the random variable providing the number of points of the process which are in $A$. 

\smallskip
 
For $0 < \eta <1$, let
\begin{align}
 \mathcal{Q}_\eta:= \bigl\{ [-\frac \eta 2, \frac \eta 2 ]^d + y \, | \, y \in (\eta \Z)^d \bigr\},
\end{align}
i.e. the set of cubes of length $\eta$ centered at the points of the lattice $(\eta \Z)^d$. Let $S_\eta$ be the set containing the edges of the cube $[0, \eta 2]^d$, i.e.
$$
S_\eta:= \{ z= (z_1,\dots,z_d)\in \R^d \, \colon \,  z_k \in \{0,\frac{\eta}{2}\} \ \ \text{for all $k=1, \cdots, d$} \}.
$$
Then, for any $x \in \R^d$ there always exists $z \in S_\eta$ and $B_{\frac \eta 2}(x) \subset Q$ for some $Q \in \mathcal Q_\eta + z$. Thus, if $\eta$ is chosen such that
$\lambda \eta^d \leq 1$, we use this geometric consideration to estimate
\begin{equation}
\begin{aligned}
	\P( \, \exists \, x \in \frac{1}{\eps} D \, \colon \,  N(B_{\frac{\eta}{2}}(x)) \geq m ) 
	&\lesssim \P( \, \exists \, Q \in \mathcal Q_\eta, z \in S_\eta  \, \colon \, (Q + z) \cap \frac{1}{\eps}  D \neq \emptyset, \ N( Q + z ) \geq m), 
\end{aligned}
\end{equation}
and the distribution for $N(A)$ to conclude that
\begin{equation}
	\label{points.squares.Poisson}
\begin{aligned}
	\P( \, \exists \, x \in \frac{1}{\eps} D \, \colon \,  N(B_{\frac{\eta}{2}}(x)) \geq m ) \lesssim \eps^{-d} \eta^{-d} e^{-\lambda \eta^d} \sum_{k=m}^{\infty}  \frac{(\lambda \eta^{d})^k}{k!} 
	\lesssim (\eta \eps)^{-d} (\lambda \eta^d)^{m}.
\end{aligned}
\end{equation}

\smallskip

Equipped with \eqref{points.squares.Poisson}, we estimate each $P(A_{\eps,k}^{\alpha,m})$: Let us assume that $z_i , z_j \in \Psi^{k,\delta, \eps}$ are such that
	\begin{align}
		B_{ \alpha \aeps \rho_j}(\eps z_j) \cap B_{\alpha \aeps \rho_i}(\eps z_i) \neq \emptyset.
	\end{align}
Then,
\begin{align}
	\eps |z_i - z_j| \leq \alpha \aeps (\rho_i + \rho_j) \leq 2 \alpha \eps^{1-\delta(k+1)} 
\end{align}
and thus by setting
\begin{align}\label{def.kappa.k}
\kappa_k =  -\delta(k+1),
\end{align}
we have
\begin{align}\label{inclusion.squares}
 |z_i - z_j| \leq  2 \alpha \eps^{\kappa_k}, \ \ \  A_{\eps,k}^{\alpha,m} \subset \{ \, \exists \, x \in \frac{1}{\eps} D \, \colon \, \# (\Psi^{k,\delta,\eps} \cap B_{m \alpha \eps^{{\kappa_k}}}(x)) \geq m \, \}.
\end{align}
We now want to estimate the event in the right-hand side above by appealing to \eqref{points.squares.Poisson} for each $\eps$ and $k$ fixed and with $\eta=\eta^\eps_k$ given 
by
\begin{align}\label{def.eta.k}
 \eta_k^\eps :=   m \alpha \eps^{{\kappa_k}}.
\end{align}
We observe indeed that by definition \eqref{def.I_k_delta}, for every $\eps$ the processes $\Psi^{k,\delta,\eps}$ are Poisson processes
on $\frac 1 \eps D$ with intensity given by
\begin{align}\label{bound.intensities}
 \lambda_k^\eps = \lambda \P( \, \eps^{-\frac{2}{d-2} - \delta(k-1)} \leq \rho \leq \eps^{-\frac{2}{d-2} - \delta(k+1)} \, )
 \stackrel{\eqref{probability.rho}}{\lesssim} \eps^{ (d-2+\beta)\Bigl(\frac{2}{d-2} + \delta(k-1)\Bigr)}
\end{align}
for any $k \geq -1$, and 
\begin{align}\label{bound.intensities.2}
 \lambda_{-2}^\eps = \lambda \P(\, \rho \leq \eps^{-\frac{2}{d-2} - \delta(-1)}\, ) \leq \lambda
\end{align}
for $k=2$. 

\medskip

We first argue that, provided that for every $k$ and $\eps$ small enough, there exists $\mu_k >0$ such that
\begin{align}\label{condition.small.intensity}
 \lambda^\eps_k(\eta^\eps_k)^d \leq \eps^{\mu_k}, 
\end{align}
then we conclude the proof of \eqref{reduction.eps_l}. Indeed, by the previous inequality we may apply \eqref{points.squares.Poisson} to the right-hand side of
\eqref{inclusion.squares} and bound by \eqref{def.eta.k} and \eqref{condition.small.intensity}
\begin{align}
	\P( A_{\eps,k}^{\alpha,m}) \lesssim \eps^{m\mu_k-d(1+{\kappa_k})}.
\end{align}
By choosing $m =M$, $M$ sufficiently large, we thus get
\begin{align}
	\P( A_{\eps,k}^{\alpha,m}) \lesssim \eps^{\mu_k}.
\end{align}
Since by (I) we only have to consider finitely many values of $k= -3, \cdots, \km$, $M$ can be chosen independently of $k$. Therefore, recalling that $\eps_l= 2^{-l}$ in 
\eqref{reduction.eps_l}, we use the previous estimate and assumption \eqref{condition.small.intensity} to infer
\begin{align}
	\sum_{l \in \N} \P\biggl(  \bigcup_{k  \geq -2 }A_{\eps_l,\delta,k}^{\alpha, M} \biggr) < \infty.
\end{align}
I thus remains to apply Borel-Cantelli's lemma to obtain \eqref{reduction.eps_l} and thus \eqref{probability.chains} as well as (II).

\medskip

To conclude the proof of the lemma, it thus remains to show \eqref{condition.small.intensity}. To do so, we recall the definitions \eqref{def.eta.k} and \eqref{def.kappa.k} 
of $\eta_k$ and $\kappa_k$ and we also set for every $-1 \leq k \leq \km$
\begin{align}\label{def.gamma.k}
 \gamma_k :=(d-2+\beta)\Bigl(\frac{2}{d-2} + \delta(k-1)\Bigr).
\end{align}
By \eqref{bound.intensities}, this definitions allows us to bound for each $\eps$
\begin{align}\label{bound.intensities.3}
 \lambda_k^\eps \leq \eps^{\gamma_k}.
\end{align}

\smallskip

We first show \eqref{condition.small.intensity} for $k=-2$: In this case, by \eqref{def.eta.k}, \eqref{def.kappa.k} and \eqref{bound.intensities.2}, we have
\begin{align}
	\lambda^\eps_{-2}(\eta^\eps_{-2}) \lesssim \eps^{d \delta}
\end{align}
and we may thus simply choose $\mu_{-2}= d \delta >0$. We now turn to the case $k > -2$: Again by \eqref{def.eta.k} and, this time, by  \eqref{bound.intensities.3} we have
\begin{align}
 \lambda^\eps_k (\eta^\eps_k)^d \lesssim \eps^{\gamma_k + d \kappa_k}.
\end{align}
Therefore we need
\begin{align}
\mu_k= \gamma_k + d\kappa_k \stackrel{\eqref{def.gamma.k}, \eqref{def.kappa.k}}{=}   \frac{2(d-2+\beta)}{d-2} - (2-\beta)\delta(k-1) - 2d \delta >0.
\end{align}
{ Since we assumed that $\beta \leq 1$, we may use \eqref{k_max.bound} on the second term in the right-hand side above and, after a short calculation, obtain that
\begin{align}
	\mu_k \geq 2 - (2-\beta) - 2d \delta \geq \beta - 2 d \delta .
\end{align}
Thanks to our assumption $\delta < \frac{\beta}{2d}$, we thus conclude that $\mu_k > 0$.} This establishes \eqref{condition.small.intensity} and completes the proof of the lemma.
\end{proof}

\appendix

\section{Proof of Remark \ref{N-S}}
\label{sec:Proof.remark}
The proof of the homogenization result in this case is analogous to the case of the Stokes equations, provided we prove the convergence of the non-linear term $u_\eps \nabla \cdot u_\eps$.

We recall the weak formulation of \eqref{NS.epsilon}. We define the space $ V_\eps := \{w \in H^1_0(D_\eps) \colon \dv w =0\}$ equipped with the norm
$\|\nabla \cdot\|_{L^2}$. Then, we call $u_\eps \in V$ a weak solution to \eqref{NS.epsilon} if
\begin{align}
	\mu \int \nabla u_\eps \cdot \nabla \phi + \int u_\eps \cdot \nabla u_\eps \cdot \phi = \langle f, \phi \rangle
	 \qquad \forall \phi \in \tilde V_\eps := \{w \in H^1_0(D_\eps) \cap L^d \colon \dv w =0\},
\end{align}
where the space $\tilde V_\eps$ is chosen such that the nonlinear term makes sense. Furthermore, by Sobolev embedding we observe
$\tilde V_\eps = V_\eps$ for $d \leq 4$. The weak formulation of \eqref{NS.hom} is analogous.
Existence of solutions to \eqref{NS.hom} is well-known.
However, the solution is only known to be unique if $d \leq 4$ and
\begin{align}
	\label{high.viscosity}
	\|f\|_{V'} < C(d,D) .
\end{align}

If $d \leq 4$ testing with the solution $u$ yields the energy estimate
\begin{align}
	\label{NS.energy}
	\| \nabla u_i \|_{L^2}  \leq \|f\|_{V'}.
\end{align}

For more details on the stationary Navier-Stokes equations see for example \cite{Temam} and \cite{Galdi2}.

\medskip

The proof of the convergence $u_\eps \wto \uh$ in $H^1(D)$ in the case $d = 3$ is now straightforward provided \eqref{high.viscosity} holds.
Indeed, thanks to \eqref{NS.energy}, the sequence $u_\eps$ is bounded in $H^1$, and by the uniqueness of the solutions to \eqref{NS.hom}, it therefore suffices to prove that the weak limit $u^\ast$ of any subsequence of $u_\eps$ satisfies \eqref{NS.hom}. 
To this end, let $v \in C_0^\infty(D)$ with $\dv = 0$. Then, applying Lemma \ref{reduction.operator}, we know
\begin{align}
	 \int \nabla u_\eps \cdot \nabla (R_{\eps} v) & \to \int \nabla u^\ast \cdot \nabla  v + \mu  u^\ast \cdot v, \\
	\langle f, R_\eps v \rangle & \to \langle f,  v \rangle.
\end{align} 
Therefore, it remains to show
\begin{align}
	\int u_\eps \cdot \nabla u_\eps \cdot (w^\eps_k \phi) \to \int u^\ast \cdot \nabla u^\ast_k \phi.
\end{align}
However, since $2^\ast = 6 > 4$ both $u_\eps$ and $ R_\eps v$ converge strongly in $L^4$ and $\nabla u_\eps$ converges weakly in $L^2$.
Thus, the convergence above follows immediately.

In the case $d=4$ this argument just fails, since the embedding from $H^1$ to $L^4$ is not compact.
However, since by Lemma \ref{reduction.operator} also  $R_\eps v \to v$ strongly in $L^q$, for any $4 < q < \infty$, the argument works again.

\section{Estimates for the Stokes equations in annuli and in the exterior of balls} 
\label{sec:estimates.stokes}

In this section we summarize some standard results for the solutions to the Stokes equation in annular and exterior domains (see, e.g. \cite{ AllaireARMA1990a, Desvillettes2008, Galdi}).  

\begin{lem}
Let  $R > 1$,  denote $A_R:= B_R\backslash B_1$, and let $\psi  \in H^1(B_\theta) \cap C^0(\bar{B_\theta})$ satisfy $\int_{\partial B_1} \psi \cdot \nu = 0$.  Let 
$(\phi_R, \pi_R)$ and $(\phi_\infty, \pi_\infty)$ be the (weak) solutions of 
\begin{align}\label{P.annulus}
\begin{cases}
\Delta \phi_R - \nabla \pi_R = 0 \ \ &\text{in $A_R$}\\
\nabla \cdot \phi_R = 0 \ &\text{in $A_R$}\\
\phi_R= \psi \  &\text{on $\partial B_1$}\\
\phi_R= 0 \ &\text{on $\partial B_R$},
\end{cases}\ \ \
\begin{cases}
\Delta \phi_\infty - \nabla \pi_\infty = 0 \ &\text{in $\Rd \backslash B_1$}\\
\nabla \cdot \phi_\infty= 0 \ &\text{in $\Rd \backslash B_1$}\\
\phi_\infty= \psi \ &\text{on $\partial B_1$}\\
\phi \to 0 \ \ \ &\text{for $|x| \to +\infty$}.
\end{cases}
\end{align}
Then,
\begin{equation}\label{estimate.stokes.annulus}
\begin{aligned}
\|\pi_R \|_{L^2(A_R) / \R } + \| \nabla \phi_R \|_{L^2(A_R)} &\leq C_1 \bigl( \| \nabla \psi \|_{L^2(A_R)} + \| \psi \|_{L^2(A_R)} \bigr),\\
 \| \phi_R \|_{C^0(\bar A_R)}&\leq C_1 \| \psi \|_{C^0(\partial B_1)},
\end{aligned}
\end{equation}
with $C_1=C_1(d, R)$. Moreover,
\begin{equation}\label{stokes.infty}
\begin{aligned}
\| \pi_\infty \|_{L^2(\Rd \setminus B_1)} +   \| \nabla \phi_\infty \|_{L^2(\Rd \setminus B_1)}
& \leq C_2 (\| \nabla \psi \|_{L^2(A_2)} + \| \psi \|_{L^2(A_2)}),\\
\| \phi_\infty \|_{C^0} &\leq C_2 \| \psi \|_{C^0( \partial B_1)},
\end{aligned}
\end{equation}
with $C_2 = C_2(d)$. Furthermore, 
\begin{equation}\label{infty.behaviour}
 \begin{aligned}
|\phi_\infty(x)| \leq C_2 \| \psi \|_{C^{0}(\partial B_1)}|x|^{2-d},
\end{aligned}
\end{equation}
and, if $\nabla \cdot \psi = 0$ in $B_1$,\footnote{This assumption is not needed, but makes the proof slightly simpler.}
\begin{equation}\label{infty.behaviour.1}
 \begin{aligned}
|\nabla \phi_\infty(x)| \leq C_2 \|\psi\|_{H^1(B_2)} |x|^{1-d} \qquad \text{for all } |x| \geq 3.
\end{aligned}
\end{equation}
\end{lem}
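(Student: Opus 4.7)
The strategy is to reduce each of the four estimates to classical results for the Stokes system, exploiting the spherical symmetry of the domains only via scaling.

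For the energy bound \eqref{estimate.stokes.annulus}, the plan is to lift $\psi$ to a divergence-free extension inside $A_R$. Since the compatibility condition $\int_{\partial B_1}\psi\cdot\nu = 0$ holds, we may use the Bogovskii operator on $A_R$ to construct $\tilde\psi\in H^1(A_R)$ that equals $\psi$ on $\partial B_1$, vanishes on $\partial B_R$, is divergence-free, and satisfies $\|\tilde\psi\|_{H^1(A_R)}\lesssim \|\psi\|_{L^2(A_R)}+\|\nabla\psi\|_{L^2(A_R)}$. The difference $w:=\phi_R-\tilde\psi$ then solves a homogeneous Stokes problem in $A_R$ with right-hand side $\Delta\tilde\psi$ and zero boundary data; testing with $w$ and using Cauchy--Schwarz yields the $H^1$-estimate on $\phi_R$. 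The pressure bound then follows from the inf-sup (Ne\v{c}as) inequality on $A_R$, which gives a right inverse of the divergence from $L^2_0(A_R)$ into $H^1_0(A_R)$.

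For the $C^0$-bound on $\phi_R$ in \eqref{estimate.stokes.annulus}, the approach is to invoke the known maximum-modulus estimate for the Stokes system in smooth bounded domains (see Galdi's monograph, Ch.~IV): solutions of the homogeneous Stokes system with continuous boundary data satisfy $\|\phi_R\|_{C^0(\bar A_R)}\lesssim \|\psi\|_{C^0(\partial B_1)}$. Alternatively, one can run a truncation/Moser-type iteration on the biharmonic equation satisfied by each component of $\phi_R$, using the divergence-free condition to eliminate the pressure. This is the only non-obvious step and is the main technical point; all the other estimates are essentially variational or explicit.

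For the exterior estimates \eqref{stokes.infty}, I would approximate $\phi_\infty$ by the family $\{\phi_R\}_{R>2}$ as $R\to\infty$. Choosing the extension $\tilde\psi$ to be supported inside $A_2$ makes the right-hand side of the energy estimate \eqref{estimate.stokes.annulus} \emph{uniform in $R$}, so after extraction $\phi_R\rightharpoonup\phi_\infty$ in $H^1_{\mathrm{loc}}$, the energy bound passes to the limit, and uniqueness in the Deny--Lions class identifies the limit. The $C^0$-bound likewise passes to the limit.

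Finally, for the pointwise decay \eqref{infty.behaviour}--\eqref{infty.behaviour.1}, I would use the boundary-integral representation of $(\phi_\infty,\pi_\infty)$ in terms of the Stokeslet (Oseen fundamental tensor), which decays precisely like $|x|^{2-d}$. The layer densities are controlled by the boundary data through the estimates already obtained, yielding \eqref{infty.behaviour}. The gradient estimate \eqref{infty.behaviour.1} then follows by applying the standard interior Caccioppoli inequality for Stokes on balls $B_{|x|/2}(x)$: indeed, for $|x|\geq 3$ such balls lie in $\{|y|\geq 1\}$, so one has
\begin{equation*}
|\nabla\phi_\infty(x)|\lesssim |x|^{-1}\sup_{B_{|x|/2}(x)}|\phi_\infty|+|x|^{-d/2}\|\phi_\infty\|_{L^2(B_{|x|/2}(x))},
\end{equation*}
and inserting \eqref{infty.behaviour} and the $L^2$-bound from \eqref{stokes.infty} gives the stated $|x|^{1-d}$ rate. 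The divergence-free hypothesis on $\psi$ in $B_1$ is only invoked to replace the $C^0$-norm by the $H^1(B_2)$-norm on the right-hand side via an integration-by-parts trick in the layer potential representation.
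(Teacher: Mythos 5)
Your overall scheme is reasonable, but you should be aware that the paper itself proves essentially nothing here directly: the energy and $C^0$ bounds in \eqref{estimate.stokes.annulus} and \eqref{stokes.infty} and the pointwise decay \eqref{infty.behaviour} are all obtained by citation to Galdi's monograph (Sections IV--V) and to Maremonti's 1999 paper on maximum-modulus bounds for Stokes. The only part the paper actually proves is the gradient decay \eqref{infty.behaviour.1}.

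There is one genuine gap in your sketch, precisely at the step you yourself flag as the main technical point: the $C^0$ bound on $\phi_R$. Your primary reference, ``a maximum-modulus estimate in Galdi Ch.~IV,'' does not exist there --- which is presumably why the authors cite Maremonti rather than Galdi for exactly this inequality. Your fallback, a Moser/truncation argument on the biharmonic equation satisfied by each component of $\phi_R$, does not work either: biharmonic functions obey no maximum principle, the Dirichlet data alone do not determine the individual biharmonic components (the coupling through the pressure and the divergence-free constraint is essential), and there is no scalar comparison argument to iterate. The maximum-modulus estimate for Stokes is a true but non-elementary theorem; you should invoke Maremonti (as the paper does) or an equivalent result rather than attempt to rederive it this way.

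For \eqref{infty.behaviour.1} your route is genuinely different from the paper's and is correct, so this is worth comparing. The paper extends $\phi_\infty$ by $\psi$ inside $B_1$ and $\pi_\infty$ by zero, obtaining a whole-space Stokes system with a right-hand side $f \in \dot H^{-1}$ supported in $\overline{B_1}$ and $\|f\|_{\dot H^{-1}} \lesssim \|\psi\|_{H^1(B_2)}$; they then represent $\phi_\infty = U * f$ with the Oseen tensor $U$ and differentiate under the convolution, paying against the decay $|D^\alpha U(x)| \lesssim |x|^{2-d-|\alpha|}$. The divergence-free hypothesis on $\psi|_{B_1}$ is what makes the extended field admissible. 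You instead combine the already-proved pointwise decay \eqref{infty.behaviour} with interior regularity for Stokes on the balls $B_{|x|/2}(x)$, $|x| \geq 3$: since such balls lie in the fluid region, $|\nabla\phi_\infty(x)| \lesssim |x|^{-1}\sup_{B_{|x|/2}(x)}|\phi_\infty| \lesssim \|\psi\|_{C^0(\partial B_1)}|x|^{1-d}$. This is more elementary, bypasses the extension step, and in particular does not need $\nabla\cdot\psi = 0$ in $B_1$. The price is that it produces the right-hand side $\|\psi\|_{C^0(\partial B_1)}$ rather than $\|\psi\|_{H^1(B_2)}$, and for $d > 2$ neither norm dominates the other. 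In the one place the paper actually uses \eqref{infty.behaviour.1} --- inequality \eqref{estimates.Di.1} with $\psi$ a rescaling of the smooth test function $v$ --- both norms are controlled by $\|v\|_{C^1}$, so either version of the estimate would serve. If you go this route you should restate \eqref{infty.behaviour.1} accordingly.
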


\begin{proof}
The existence and uniqueness of solutions to both problems in \eqref{P.annulus} together with the first estimate in both \eqref{estimate.stokes.annulus} and \eqref{stokes.infty} is a standard result \cite{Galdi}[Section IV and V].
The second estimate in both \eqref{estimate.stokes.annulus} and \eqref{stokes.infty} can be found in \cite{Maremonti1999}[Theorem 5.1 and Theorem 6.1]. Estimate \eqref{infty.behaviour} can be found in \cite{Maremonti1999}[Theorem 6.1], too.

\medskip

To prove \eqref{infty.behaviour.1}, we extend $\phi_\infty$ by $\psi$ inside $B_1$ and $\pi_\infty$ by $0$ inside $B_1$.
Then, by \eqref{stokes.infty}
\begin{align}
\begin{cases}
	- \Delta \phi_\infty + \nabla \pi_\infty = f &\quad \text{in } \Rd\\
	\nabla \cdot \phi_\infty = 0 &\quad \text{in } \Rd
\end{cases}
\end{align}
for some $f \in \dot H^{-1}(\Rd)$,
with
\begin{align}
	\supp f &\subset \overline{B_1}, \\
	\|f\|_{\dot H^{-1}(\Rd)} &\lesssim \|\psi\|_{H^1(B_2)}.
\end{align}
Here, $\dot H^{-1}(\Rd)$ is the dual of the homogeneous Sobolev space
\begin{align}
	\dot H^1(\Rd) := \Bigl\{ v \in L^{\frac{2d}{d-2}}(\Rd) \colon \nabla v \in L^2(\Rd) \Bigr\}, \qquad \|\cdot\|_{\dot H^1(\Rd)} := \|\nabla \cdot\|_{L^2(\Rd)}.
\end{align}
Hence, with $U$ being the fundamental solution of the Stokes equations
we have
\begin{align}
	\phi_\infty(x) = (U \ast f)(x).
\end{align}
The fundamental solution satisfies
\begin{align}
	|D^\alpha U(x)| \lesssim C(d,|\alpha|) |x|^{2 - d - |\alpha|}.
\end{align}
Using the compact support of $f$, and letting $\eta \in C_c^\infty(B_2)$ be a cut-off function with $\eta = 1 $ in $B_1$, we deduce
for all $|x| > 3$
\begin{align}
	|\nabla \phi_\infty(x)| &= |\langle \eta \nabla U(x-\cdot),f\rangle_{H^1,\dot H^{-1}}| \\
	& \leq \|\eta \nabla U(x-\cdot)\|_{\dot H^1(\Rd)} \|f\|_{\dot H^{-1}(\Rd)} \\
	& \lesssim C_3 \|\psi\|_{H^1(B_2)} |x|^{1-d}.
\end{align}
This proves \eqref{infty.behaviour.1}.
\end{proof}
\section{Some results on Strong Law of Large Numbers}
\label{sec:SLLN}

 For the reader's convenience, we list below some of the results proven in \cite{GHV1}[Section 5] on Strong Law of Large Numbers for a general marked point process and
 which we use throughout this paper. We adapt these statements to our special case of $\Phi$ being a Poisson process with intensity $\lambda > 0$ (see also Section
 \ref{setting}).  

 \begin{lem}\label{SLLN.general.pp}
Let $(\Phi, \rr)$ be as in Section \ref{setting}.
Then, for every bounded set $B \subset \Rd$ {which is star-shaped with respect to the origin}, we have 
 \begin{align}\label{average.number}
 \lim_{\eps \downarrow 0^+} \eps^d N^\eps(B)= \lambda |B| \ \ \ \ \text{almost surely},
 \end{align}
and
\begin{align}\label{SLLN.convergence.pp}
\lim_{\eps \downarrow 0^+} \eps^{d}\sum_{ z_i \in \Phi^\eps(B)} \rho_i^{d-2} = \lambda \langle \rho^{d-2} \rangle |B| \ \ \ \ \text{almost surely.}
\end{align}
 
 \smallskip
 
Furthermore, for every $\delta < 0$ the process $\Phi_\delta$ obtained from $\Phi$ as in \eqref{thinned.process} satisfies the analogues of \eqref{SLLN.convergence.pp}, \eqref{average.number} and
\begin{align}\label{convergence.average}
\lim_{\delta \downarrow 0^+}\langle N_\delta(A) \rangle = \lambda |A|
\end{align}
for every bounded set $A \subset \Rd$.

\end{lem}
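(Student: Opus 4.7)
The plan is to establish the three claims in sequence: first the point count \eqref{average.number}, then use it as an input together with Kolmogorov's SLLN for the i.i.d.\ marks to derive \eqref{SLLN.convergence.pp}, and finally handle the thinned process $\Phi_\delta$ by an ergodicity and monotone approximation argument. The proof is essentially the one carried out in detail in \cite{GHV1}[Section 5].

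For \eqref{average.number}, the random variable $N^\eps(B) = N(\eps^{-1}B)$ is Poisson with mean and variance both equal to $\lambda \eps^{-d}|B|$. Along the geometric sequence $\eps_n = (1+\eta)^{-n}$ with $\eta > 0$ small, Chebyshev's inequality gives
\begin{align}
	\P\bigl( |\eps_n^d N^{\eps_n}(B) - \lambda|B|| > \tau \bigr) \leq \lambda|B| (1+\eta)^{-dn}/\tau^2,
\end{align}
which is summable in $n$, so Borel--Cantelli yields a.s.\ convergence along $\{\eps_n\}$. For $\eps \in [\eps_{n+1},\eps_n]$, the star-shapedness of $B$ with respect to the origin implies $\eps_n^{-1}B \subset \eps^{-1}B \subset \eps_{n+1}^{-1}B$, and monotonicity of $N$ gives
\begin{align}
	(1+\eta)^{-d}\lambda|B| \leq \liminf_{\eps \downarrow 0} \eps^d N^\eps(B) \leq \limsup_{\eps \downarrow 0}\eps^d N^\eps(B) \leq (1+\eta)^d\lambda|B|
\end{align}
almost surely. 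Intersecting these events over a countable sequence $\eta_k \downarrow 0$ yields \eqref{average.number}.

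For the weighted sum \eqref{SLLN.convergence.pp}, I would condition on $\Phi$ and use that, independently of $\Phi$, the marks $\{\rho_i\}$ are i.i.d.\ with $\langle \rho^{d-2}\rangle < \infty$ (this follows from \eqref{power.law} with $\beta > 0$ by Jensen/Hölder). Writing
\begin{align}
	\eps^d \sum_{z_i \in \Phi^\eps(B)} \rho_i^{d-2} = \bigl(\eps^d N^\eps(B)\bigr) \cdot \frac{1}{N^\eps(B)} \sum_{i=1}^{N^\eps(B)}\rho_i^{d-2},
\end{align}
the first factor tends to $\lambda|B|$ by the preceding step, while the empirical mean converges to $\langle \rho^{d-2}\rangle$ a.s.\ by Kolmogorov's SLLN applied to any measurable enumeration of the marks, since $N^\eps(B) \to \infty$ a.s. Combining gives \eqref{SLLN.convergence.pp}.

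For the thinned process, the two claims \eqref{average.number}--\eqref{SLLN.convergence.pp} carry over because $\Phi_\delta$ inherits translation-invariance and ergodicity from $\Phi$ as a deterministic local functional, and its intensity $\lambda_\delta$ is finite and known explicitly from the void probabilities of the Poisson process; the same Chebyshev--Borel--Cantelli scheme then applies (and in fact extends to a very general class of short-range correlated processes, as stressed in the introduction). Finally, \eqref{convergence.average} follows from monotone convergence: as $\delta \downarrow 0$ the defining minimum-distance condition in \eqref{thinned.process} becomes weaker, so $\delta \mapsto N_\delta(A)$ is non-decreasing, and almost surely $N_\delta(A) \uparrow N(A)$ because a.s.\ the Poisson process has only finitely many points in the bounded set $A$, all at positive mutual distance. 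Taking expectations and using monotone convergence yields $\langle N_\delta(A)\rangle \uparrow \langle N(A)\rangle = \lambda|A|$. The only mildly delicate point is the rigorous use of Kolmogorov's SLLN for the marks of points enumerated according to a $\Phi$-measurable rule, but this is standard.
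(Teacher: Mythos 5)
The paper does not prove this lemma itself: it explicitly says the statements are ``proven in detail in \cite{GHV1}'' and only records them for convenience, so there is no in-paper argument to compare against. Your plan is a correct and standard reconstruction of how such a proof goes, and is presumably close in spirit to \cite{GHV1}[Section 5].

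One point worth making precise, which you do flag as delicate at the end: for the prefix-average argument
$\eps^d\sum_{z_i\in\Phi^\eps(B)}\rho_i^{d-2}=(\eps^d N^\eps(B))\cdot\frac{1}{N^\eps(B)}\sum_{i=1}^{N^\eps(B)}\rho_i^{d-2}$
to make sense, the enumeration must be \emph{compatible with the nesting} $\Phi^{\eps'}(B)\subset\Phi^{\eps}(B)$ for $\eps<\eps'$ — ``any measurable enumeration'' is not quite right. The natural choice is to order the points of $\Phi$ by the Minkowski gauge $g_B(z):=\inf\{t>0:z/t\in B\}$ (which is well-defined precisely because $B$ is star-shaped with respect to the origin), since then $\Phi^\eps(B)=\{z\in\Phi:g_B(z)\le 1/\eps\}$ is exactly a prefix. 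Conditional on $\Phi$, the marks in that fixed, $\Phi$-measurable order are i.i.d.\ with finite mean $\langle\rho^{d-2}\rangle$ (finiteness following from \eqref{power.law}), so Kolmogorov's SLLN applies along this enumeration, and the overall exceptional set is still null after integrating over $\Phi$. With that clarification the rest of your argument (Chebyshev plus Borel--Cantelli along a geometric subsequence with star-shaped interpolation, ergodicity/stationarity of the thinned process $\Phi_\delta$ as a local translation-covariant functional of $\Phi$, and monotone convergence $N_\delta(A)\uparrow N(A)$ a.s.\ since a Poisson process has a.s.\ finitely many and mutually distinct points in the bounded set $A$) is sound.
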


  \begin{lem}\label{l.LLN.index.set}
	In the same setting of Lemma \ref{SLLN.general.pp}, let $\{ I_\eps \}_{\eps > 0}$ be a family of collections of points such that $I_\eps \subset \Phi^\eps( B)$ and
       \begin{align}\label{small.index.set}
       \lim_{\eps \downarrow 0^+} \eps^d\# I_\eps = 0 \ \ \ \text{almost surely.}
       \end{align}
       Then,
	\begin{align}
		\lim_{\eps \downarrow 0^+}{ \eps^d} \sum_{z_i \in I_\eps} \rho_i^{d-2} \to 0 \quad \text{almost surely.}
	\end{align}
\end{lem}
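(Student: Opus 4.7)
The plan is to truncate by a cutoff on the radii, thereby converting the hypothesis \eqref{small.index.set} into a bound on the contribution from small marks, while using the moment condition \eqref{power.law} to control the tail coming from the large marks.

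For any $M > 0$, I would first write
\begin{align}
\eps^d \sum_{z_i \in I_\eps} \rho_i^{d-2} \leq M^{d-2}\, \eps^d\, \#I_\eps \; + \; \eps^d \sum_{z_i \in \Phi^\eps(B)} \rho_i^{d-2}\1_{\rho_i > M}.
\end{align}
The first term tends almost surely to $0$ as $\eps \downarrow 0^+$ for each fixed $M$, directly by \eqref{small.index.set}. For the second term, the elementary bound $\rho^{d-2}\1_{\rho > M} \leq M^{-\beta}\rho^{d-2+\beta}$ together with \eqref{power.law} yields
\begin{align}
\langle \rho^{d-2}\1_{\rho > M}\rangle \leq M^{-\beta}\langle \rho^{d-2+\beta}\rangle < +\infty.
\end{align}
I would then invoke the Strong Law of Large Numbers for the marked Poisson process applied to the integrable functional $\rho^{d-2}\1_{\rho > M}$; this is a direct analogue of \eqref{SLLN.convergence.pp} whose proof in \cite{GHV1} carries over verbatim to any integrable functional of the marks. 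This yields, almost surely,
\begin{align}
\lim_{\eps \downarrow 0^+}\eps^d \sum_{z_i \in \Phi^\eps(B)} \rho_i^{d-2}\1_{\rho_i > M} = \lambda |B|\langle \rho^{d-2}\1_{\rho > M}\rangle \leq \lambda |B|\, M^{-\beta}\, \langle \rho^{d-2+\beta}\rangle.
\end{align}

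Combining the two estimates, on an $M$-dependent full-probability event we obtain
\begin{align}
\limsup_{\eps \downarrow 0^+}\eps^d \sum_{z_i \in I_\eps} \rho_i^{d-2} \leq \lambda |B|\, M^{-\beta}\, \langle \rho^{d-2+\beta}\rangle.
\end{align}
To turn this into a single almost sure statement I would restrict $M$ to the positive integers and take the intersection of the countably many null sets; on the resulting full-probability event, the above inequality holds for every $M \in \N$, and letting $M \to \infty$ forces the $\limsup$ to vanish, which is the claim.

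The argument presents no real obstacle: the role of the extra exponent $\beta > 0$ in \eqref{power.law} is precisely to furnish the tail decay that makes the truncation effective. Under the weaker condition \eqref{GHV.condition} alone, no uniform-in-$\eps$ control of the large-radii contribution would be available, which is why $\beta > 0$ is essential here. The only point requiring a brief justification is the extension of Lemma \ref{SLLN.general.pp} from the functional $\rho^{d-2}$ to an arbitrary integrable functional $f(\rho)$ of the marks, but this is immediate from the proof of that lemma in \cite{GHV1}.
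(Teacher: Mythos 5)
Your truncation argument is sound and is the natural way to prove this; the paper defers the proof to \cite{GHV1}, and the approach there is the same decomposition into small-mark and large-mark contributions. However, your closing remark is wrong: the extra exponent $\beta>0$ from \eqref{power.law} is \emph{not} needed for this lemma. You use $\beta$ only to write $\langle \rho^{d-2}\1_{\rho>M}\rangle \leq M^{-\beta}\langle \rho^{d-2+\beta}\rangle$, but the only thing the argument actually needs is that $\langle \rho^{d-2}\1_{\rho>M}\rangle \to 0$ as $M\to\infty$, and this follows from $\langle \rho^{d-2}\rangle<\infty$ alone (i.e.\ \eqref{GHV.condition}) by dominated convergence, without any rate. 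This is consistent with the fact that the lemma is quoted verbatim from \cite{GHV1}, where only \eqref{GHV.condition} is assumed; so the claim that ``no uniform-in-$\eps$ control of the large-radii contribution would be available'' under \eqref{GHV.condition} is incorrect. The role of $\beta>0$ in this paper is elsewhere (the geometric control of clusters in Section \ref{geometry} and the probabilistic estimates of Section \ref{s.probability}), not in this SLLN lemma.
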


  \begin{lem}\label{l.w.independent}
In the same setting of Lemma \ref{SLLN.general.pp}, let us assume that in addition the marks satisfy $\langle \rho^2(d-2) \rangle < +\infty$. 
For $z_i \in \Phi$ and $\eps >0$, let $r_{i,\eps} > 0$, and assume there exists a constant $C>0$  such that for all $z_i \in \Phi$ and $\eps >0$
\begin{align}
	r_{i,\eps} \leq C \eps.
\end{align}
Then, almost surely, we have
\begin{align*}
\lim_{\eps \downarrow 0^+ }\sum_{z_i \in \Phi^\eps(B)} \rho_i^{d-2} \frac{\eps^d}{r_{i,\eps}^d}\int_{B_{r_{i,\eps}}(\eps z_i)} \zeta(x) \, dx 
= |B_1| \lambda \langle \, \rho^{d-2} \, \rangle \int_{B} \zeta(x) \, dx,
\end{align*}
for every $\zeta \in C_0^{1}(B)$, where $B_1 \subset \Rd$ denotes the unit ball. 
 \end{lem}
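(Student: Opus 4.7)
\textbf{Proof plan for Lemma~\ref{l.w.independent}.}
The overall strategy is to decouple two effects: (i) replacing the averaged integral of $\zeta$ over the tiny ball $B_{r_{i,\eps}}(\eps z_i)$ by the pointwise value $\zeta(\eps z_i)$, and (ii) showing a weighted SLLN for the resulting sum $\eps^d \sum \rho_i^{d-2}\zeta(\eps z_i)$. Writing the sum in the statement as
\begin{align}
 S_\eps := |B_1|\,\eps^d \sum_{z_i \in \Phi^\eps(B)} \rho_i^{d-2} \fint_{B_{r_{i,\eps}}(\eps z_i)} \zeta(x)\,dx,
\end{align}
the claim splits as $S_\eps = T_\eps + E_\eps$, with
\begin{align}
 T_\eps := |B_1|\,\eps^d\!\!\!\sum_{z_i\in\Phi^\eps(B)}\!\!\rho_i^{d-2}\zeta(\eps z_i), \qquad
 E_\eps := |B_1|\,\eps^d\!\!\!\sum_{z_i\in\Phi^\eps(B)}\!\!\rho_i^{d-2}\Bigl(\fint_{B_{r_{i,\eps}}(\eps z_i)} \zeta - \zeta(\eps z_i)\Bigr).
\end{align}

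\textbf{Step 1: The averaging error $E_\eps$ vanishes.} Since $\zeta\in C^1_0(B)$ and $r_{i,\eps}\le C\eps$, the mean value theorem gives
\begin{align}
 \Bigl|\fint_{B_{r_{i,\eps}}(\eps z_i)} \zeta - \zeta(\eps z_i)\Bigr| \le \|\nabla\zeta\|_{\infty}\,r_{i,\eps} \le C\|\nabla\zeta\|_{\infty}\,\eps.
\end{align}
Summing and invoking \eqref{SLLN.convergence.pp} of Lemma~\ref{SLLN.general.pp}, we obtain $|E_\eps| \lesssim \|\nabla\zeta\|_{\infty}\,\eps\cdot\bigl(\eps^d\sum_{z_i\in\Phi^\eps(B)}\rho_i^{d-2}\bigr)\to 0$ almost surely as $\eps\downarrow 0^+$.

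\textbf{Step 2: Weighted SLLN for $T_\eps$.} It remains to prove $T_\eps \to |B_1|\,\lambda\langle\rho^{d-2}\rangle\int_B\zeta$ almost surely. Fix $\eta>0$ and cover the support of $\zeta$ by a finite disjoint family $\{Q_k^\eta\}_{k=1}^{K(\eta)}$ of half-open cubes of side $\eta$ (each star-shaped with respect to its centre, so that the SLLN statement \eqref{SLLN.convergence.pp} applies after a translation). Approximate $\zeta$ by the step function $\zeta_\eta := \sum_k \zeta(x_k)\mathbf{1}_{Q_k^\eta}$ for some $x_k\in Q_k^\eta$; since $\zeta\in C^1$ one has $\|\zeta-\zeta_\eta\|_\infty \le \|\nabla\zeta\|_\infty\,\sqrt{d}\,\eta$. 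Applying \eqref{SLLN.convergence.pp} of Lemma~\ref{SLLN.general.pp} on each cube,
\begin{align}
 \eps^d\!\!\!\sum_{z_i\in\Phi^\eps(Q_k^\eta)}\!\!\rho_i^{d-2} \xrightarrow[\eps\downarrow 0]{}\lambda\langle\rho^{d-2}\rangle\,|Q_k^\eta|,
\end{align}
almost surely, so that almost surely
\begin{align}
 \eps^d\!\!\!\sum_{z_i\in\Phi^\eps(B)}\!\!\rho_i^{d-2}\zeta_\eta(\eps z_i) \xrightarrow[\eps\downarrow 0]{} \lambda\langle\rho^{d-2}\rangle\int_B\zeta_\eta.
\end{align}
The substitution error $|T_\eps - |B_1|\eps^d\sum\rho_i^{d-2}\zeta_\eta(\eps z_i)|$ is bounded by $|B_1|\,\|\zeta-\zeta_\eta\|_\infty \eps^d\sum\rho_i^{d-2}$, which by \eqref{SLLN.convergence.pp} is, in the limit $\eps\downarrow 0^+$, almost surely of order $O(\eta)$. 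Similarly $|\int_B\zeta - \int_B\zeta_\eta| = O(\eta)$. Sending $\eta\downarrow 0$ along a countable sequence (on the almost sure set where all the countably many Lemma~\ref{SLLN.general.pp} limits hold simultaneously) concludes the proof.

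\textbf{Main obstacle.} The only delicate point is that Lemma~\ref{SLLN.general.pp} must be applied simultaneously to a countable family of cubes on a single full-probability event; this is standard once one works with a rational-sided dyadic partition, but care is needed because \eqref{SLLN.convergence.pp} is stated only for fixed star-shaped sets. The assumption $\langle\rho^{2(d-2)}\rangle<+\infty$ enters here exactly as in Lemma~\ref{l.LLN.index.set}: it provides the second-moment control required to upgrade the pointwise SLLN to a uniform statement over a Vitali-type family, and in particular to handle the finitely many cubes $Q_k^\eta$ per $\eta$ without losing the almost-sure character of the convergence when taking $\eta\downarrow 0$.
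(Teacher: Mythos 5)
Your splitting of the sum into an averaging-error term $E_\eps$ and the weighted SLLN term $T_\eps$ is the natural decomposition, and Step~1 is correct: the Lipschitz estimate $\bigl|\fint_{B_{r_{i,\eps}}(\eps z_i)}\zeta - \zeta(\eps z_i)\bigr|\le C\|\nabla\zeta\|_{\infty}\eps$ holds pathwise using only $r_{i,\eps}\le C\eps$, and the extra factor $\eps$ kills the term after invoking \eqref{SLLN.convergence.pp}.

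Step~2 has a real gap at exactly the spot you flag. Lemma \ref{SLLN.general.pp} is stated only for sets star-shaped with respect to the origin, and the phrase ``applies after a translation'' does not close this: the rescaled set $\tfrac{1}{\eps}Q_k^\eta$ both dilates \emph{and} moves off to infinity as $\eps\downarrow 0$, so translating the process to recentre the cube produces a different process for every $\eps$, not a single fixed process to which the cited almost-sure limit can be applied. The star-shapedness hypothesis is not cosmetic: the standard way to obtain \eqref{SLLN.convergence.pp} with only $\langle\rho^{d-2}\rangle<\infty$ is Borel--Cantelli along a dyadic sequence $\eps_n$ followed by interpolation for $\eps_{n+1}\le\eps\le\eps_n$ via the nesting $\tfrac{1}{\eps}B\subset\tfrac{1}{\eps_{n+1}}B$, and that nesting fails outright for a cube $Q_k^\eta$ that does not contain the origin. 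Your route can likely be repaired --- e.g.\ by writing each axis-aligned cube contained in a fixed orthant as an inclusion--exclusion of $2^d$ boxes with a vertex at the origin, each of which is star-shaped and hence covered by \eqref{SLLN.convergence.pp} --- but as written the passage from star-shaped domains to translated cubes is missing. Relatedly, your proof never actually uses the hypothesis $\langle\rho^{2(d-2)}\rangle<\infty$, and your closing paragraph's explanation for why it appears (``Vitali-type family'') does not correspond to any step in the argument; that the hypothesis plays no role in what you wrote should itself be a warning sign. The reference proof (the paper cites \cite{GHV1} for this lemma rather than reproducing it) most plausibly runs a Chebyshev variance estimate plus Borel--Cantelli directly on the quantity $S_\eps$, where $\langle\rho^{2(d-2)}\rangle$ enters naturally through the variance of $\sum_i\rho_i^{d-2}(\cdots)$ and the interpolation between dyadic $\eps$ uses $r_{i,\eps}\le C\eps$ together with the star-shapedness of $B$ itself.
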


\bigskip 
 
\section*{Acknowledgements}
The authors acknowledge support through the CRC 1060 (The Mathematics of Emergent Effects) that is funded through the German Science Foundation (DFG), and the Hausdorff
Center for Mathematics (HCM) at the University of Bonn.

\bibliographystyle{amsplain}
\bibliography{giunti.RS}

\def\cprime{$'$}
\providecommand{\bysame}{\leavevmode\hbox to3em{\hrulefill}\thinspace}
\providecommand{\MR}{\relax\ifhmode\unskip\space\fi MR }
\providecommand{\MRhref}[2]{%
  \href{http://www.ams.org/mathscinet-getitem?mr=#1}{#2}
}
\providecommand{\href}[2]{#2}
\begin{thebibliography}{10}

\bibitem{AllaireARMA1990a}
G.~Allaire, \emph{Homogenization of the {N}avier-{S}tokes equations in open
  sets perforated with tiny holes. {I}. {A}bstract framework, a volume
  distribution of holes}, Arch. Rational Mech. Anal. \textbf{113} (1990),
  no.~3, 209--259.

\bibitem{Brillard1986-1987}
A.~Brillard, \emph{Asymptotic analysis of incompressible and viscous fluid flow
  through porous media. {B}rinkman's law via epi-convergence methods}, Annales
  de la Facult\'e des sciences de Toulouse : Math\'ematiques \textbf{8}
  (1986-1987), no.~2, 225--252.

\bibitem{Brinkman1947}
H.C. Brinkman, \emph{A calculation of the viscosity and the sedimentation
  constant for solutions of large chain molecules taking into account the
  hampered flow of the solvent through these molecules}, Physica \textbf{13}
  (1947), 447--448.

\bibitem{Carrapatoso2018}
K.~Carrapatoso and M.~Hillairet, \emph{On the derivation of a
  {S}tokes-{B}rinkman problem from {S}tokes equations around a random array of
  moving spheres}, arXiv preprint arXiv:1804.10498 (2018).

\bibitem{CioranescuMurat}
D.~Cioranescu and F.~Murat, \emph{A strange term coming from nowhere}, Topics
  in the Mathematical Modelling of Composite Materials. Progress in Nonlinear
  Differential Equations and Their Applications. \textbf{31} (1997), 45--93.

\bibitem{Desvillettes2008}
L.~Desvillettes, F.~Golse, and V.~Ricci, \emph{The mean-field limit for solid
  particles in a {N}avier-{S}tokes flow}, Journal of Statistical Physics
  \textbf{131} (2008), no.~5, 941--967.

\bibitem{Feireisl2016}
E.~Feireisl, Y.~Namlyeyeva, and \v{S}. Ne\v{c}asov\'a, \emph{Homogenization of
  the evolutionary {N}avier-{S}tokes system}, Manuscripta Math. \textbf{149}
  (2016), no.~1-2, 251--274. 

\bibitem{Galdi}
G.P. Galdi, \emph{An introduction to the mathematical theory of the
  {N}avier-{S}tokes equations. {V}ol. {I}}, Springer Tracts in Natural
  Philosophy, vol.~38, Springer-Verlag, New York, 1994, Linearized steady
  problems. 

\bibitem{Galdi2}
\bysame, \emph{An introduction to the mathematical theory of the
  {N}avier-{S}tokes equations. {V}ol. {II}}, Springer Tracts in Natural
  Philosophy, vol.~39, Springer-Verlag, New York, 1994, Nonlinear steady
  problems. 

\bibitem{GH2}
A.~{Giunti} and {H\"ofer}, \emph{{Homogenization for the Stokes equation in
  randomly perforated domains: Convergence of the pressure terms}}, In
  preparation (2019).

\bibitem{GHV1}
A.~{Giunti}, R.~{H\"ofer}, and J.J.L. {Velazquez}, \emph{{Homogenization for
  the Poisson equation in randomly perforated domains under minimal assumptions
  on the size of the holes}}, to appear in Comm. in PDEs, arXiv preprint 1803.10214  (2018).

\bibitem{Hillairet2018}
M.~Hillairet, \emph{On the homogenization of the {S}tokes problem in a
  perforated domain}, Archive for Rational Mechanics and Analysis (2018).

\bibitem{Hillairet2017}
M.~Hillairet, A.~Moussa, and F.~Sueur, \emph{On the effect of polydispersity
  and rotation on the {B}rinkman force induced by a cloud of particles on a
  viscous incompressible flow}, arXiv preprint arXiv:1705.08628 (2017).

\bibitem{Hofer18}
R.~M. H{\"o}fer, \emph{Sedimentation of inertialess particles in {S}tokes
  flows}, Comm. Math. Phys. \textbf{360} (2018), no.~1, 55--101. 

\bibitem{Jabin2004}
P.-E. Jabin and F.~Otto, \emph{Identification of the dilute regime in particle
  sedimentation}, Comm. Math. Phys. \textbf{250} (2004), no.~2, 415--432.
 

\bibitem{LEVY198311}
T.~L\`evy, \emph{Fluid flow through an array of fixed particles}, International
  Journal of Engineering Science \textbf{21} (1983), no.~1, 11 -- 23.

\bibitem{MarKhr64}
V.A. Marchenko and E.Y. Khruslov, \emph{Boundary-value problems with
  fine-grained boundary}, Mat. Sb. (N.S.) \textbf{65(107)} (1964), 458--472.

\bibitem{Maremonti1999}
P.~Maremonti, R.~Russo, and G.~Starita, \emph{On the {S}tokes equations: the
  boundary value problem}, Advances in fluid dynamics, Quad. Mat., vol.~4,
  Dept. Math., Seconda Univ. Napoli, Caserta, 1999, pp.~69--140.

\bibitem{Mecherbet2018}
A.~Mecherbet, \emph{Sedimentation of particles in {S}tokes flow}, arXiv
  preprint arXiv:1806.07795 (2018).

\bibitem{Rubinstein1986}
J.~Rubinstein, \emph{On the macroscopic description of slow viscous flow past a
  random array of spheres}, J. Statist. Phys. \textbf{44} (1986), no.~5-6,
  849--863. 

\bibitem{SanchezP82}
E.~Sanchez-Palencia, \emph{On the asymptotics of the fluid flow past an array
  of fixed obstacles}, International Journal of Engineering Science - Int. J.
  Eng. Sci. \textbf{20} (1982), 1291--1301.

\bibitem{Temam}
R.~Temam, \emph{{N}avier-{S}tokes equations}, AMS Chelsea Publishing,
  Providence, RI, 2001, Theory and numerical analysis, Reprint of the 1984
  edition. 

\end{thebibliography}
\end{document}